\declaretheorem[style = plain, numberwithin = section]{theorem}
\declaretheorem[style = plain,      sibling = theorem]{corollary}
\declaretheorem[style = plain,      sibling = theorem]{lemma}
\declaretheorem[style = plain,      sibling = theorem]{proposition}
\declaretheorem[style = definition, sibling = theorem]{definition}
\declaretheorem[style = definition, sibling = theorem]{example}
\declaretheorem[style = definition, sibling = theorem]{remark}
\declaretheorem[style = definition, sibling = theorem, name={}]{pgr}
\declaretheorem[style = plain,      sibling = theorem]{question}
\newcommand{\N}{\mathbb{N}}
\newcommand{\Z}{\mathbb{Z}}
\newcommand{\R}{\mathbb{R}}
\newcommand{\T}{\mathbb{T}}
\newcommand{\Cu}{\mathrm{Cu}}
\DeclareMathOperator{\covol}{covol}
\newcommand{\ca}[1]{$\mathrm{C}^*$-algebra#1}
\newcommand{\E}{\mathcal{E}}
\def\dual#1{{#1}^\wedge}
\newcounter{IntroCount}
\newtheorem{thmIntro}[IntroCount]{Theorem}
\crefname{thmIntro}{Theorem}{Theorems}
\newtheorem{qstIntro}[IntroCount]{Question}
\crefname{qstIntro}{Question}{Questions}
\crefname{corIntro}{Corollary}{Corollaries}
\title{$\mathcal{Z}$-stability of twisted group $\mathrm{C}^*$-algebras of nilpotent groups}
\author{Ulrik Enstad}
\address{Department of Mathematics,
University of Oslo,
Moltke Moes vei 35,
0851 Oslo.}
\email{ubenstad@math.uio.no}
\author{Eduard Vilalta}
\address{Department de Matemàtiques, Universitat Politècnica de Catalunya - BarcelonaTech (UPC), Diagonal 647, 08028 Barcelona}
\email[]{eduard.vilalta@upc.edu}
\urladdr{www.eduardvilalta.com}
\begin{document}

\begin{abstract}
    We prove that a twisted group \ca{} of a finitely generated nilpotent group is $\mathcal{Z}$-stable if and only if it is nowhere scattered, a condition that we characterize in terms of the given group and $2$-cocycle. As a main application, we prove new converses to the Balian--Low Theorem for projective, square-integrable representations of nilpotent Lie groups.
\end{abstract}

\maketitle

\section{Introduction}

The landmark completion of the Elliott classification program for unital separable simple nuclear \ca{s} (see among many others \cite{EGLN,TikWhiWin17,Win12Pure}) saw three regularity properties rise to prominence: \emph{$\mathcal{Z}$-stability}, a \ca{ic} analogue of von Neumann algebras' McDuffness; \emph{finite nuclear dimension}, an operator algebraic version of having finite Lebesgue dimension; and \emph{strict comparison}, a generalization of tracial comparison in II$_1$ factors. The nature of each of these properties is very different (analytic, topological and algebraic respectively) but, surprisingly, they are deeply related. Indeed, the famous Toms-Winter conjecture predicts that all three notions coincide for unital separable simple nuclear non-elementary \ca{s}, with the only remaining open implication being if strict comparison implies $\mathcal{Z}$-stability \cite{CETWW21,BBSTWW19,HiSa12,Ror04Stab,Thi20,Win12Pure,WinZacNucDim}.

While most of the study of these regularity conditions has concentrated in the simple nuclear setting (in part due to the impact and importance of the Elliott classification program), several recent works have started a large scale investigation into these properties and their potential relations in the general case \cite{AntPerThiVil24Pure,EllNiuSanTik20,RobTik17NucDim,TikWin14}. A good starting point in this analysis is the study of \ca{s} that arise from combinatorial objects such as graphs, groups and groupoids. For the case of groups, the reduced group \ca{} having finite nuclear dimension (which implies amenability of the group) has been heavily studied in the series of papers \cite{EcGilMc19FdrVirtNil,EcMcKee18,EckWu24}. At the same time, the study of which reduced group \ca{s} have strict comparison has seen several advancements (notably in the recent breakthrough \cite{AGKP24StComp}). However, for amenable groups the reduced group \ca{} is never $\mathcal{Z}$-stable, while in the non-amenable (i.e. non-nuclear) case the notion of $\mathcal{Z}$-stability loses some of its importance.

A natural generalization of group \ca{s} is that of twisted group \ca{s}, which are built not only out of a group but also a $2$-cocycle (\cref{dfn:TwistCAlg}). Unlike their untwisted counterparts, twisted group \ca{s} of amenable groups can be simple \cite{Pa89,BeOm16,BeOm18} and even fall within the scope of the Elliott classification program\footnote{That is, they can be unital separable simple nuclear $\mathcal{Z}$-stable and satisfy the UCT.} (\cite[Theorem 5.5.]{BeEnva22}). Thus, these algebras are natural candidates for which to study the relation between finite nuclear dimension, strict comparison and $\mathcal{Z}$-stability. The nuclear dimension results from the untwisted case extend (in a precise sense) to the twisted case, and so a first step towards studying regularity properties of such algebras is to determine when twisted group \ca{s} of known finite nuclear dimension are $\mathcal{Z}$-stable. Concretely, the importance of $\mathcal{Z}$-stability for twisted group \ca{s} of amenable groups is twofold: First, $\mathcal{Z}$-stability implies strict comparison, a property that allows for the study of countably generated Hilbert modules over the algebra and which has found surprising and novel applications in time-frequency analysis; see \ref{subsec:TimeFreqIntro} below. Secondly, although most current classification theorems for $^*$-homomorphisms assume the codomain algebra to be both simple and $\mathcal{Z}$-stable \cite{ClassStHomI}, it is conceivable that future results will move the assumption of simplicity to the morphism itself (in the form of fullness). Thus, nuclear $\mathcal{Z}$-stable non-simple algebras become potential codomains for subsequent classification results.

Since $\mathcal{Z}$-stability passes to ideals and quotients, it follows that no nonzero ideal-quotient (i.e. ideal of a quotient) of a $\mathcal{Z}$-stable \ca{} can be elementary. This property was termed \emph{nowhere scatteredness} in \cite{ThiVil24NSCa}, and should be viewed as the non-simple version of non-elementariness; see \ref{pgr:NS}. Thus, nowhere scatteredness is a necessary condition for $\mathcal{Z}$-stability, and it is an open problem if it is also sufficient under the presence of finite nuclear dimension and separability; see for example \cite[Paragraph~1.3]{AntPerThiVil24Pure} for a discussion on the non-simple Toms-Winter conjecture.

\begin{qstIntro}\label{qst:Intro}
    Let $A$ be a separable nowhere scattered \ca{} of finite nuclear dimension. Is $A$ $\mathcal{Z}$-stable?
\end{qstIntro}

In this paper, we study \cref{qst:Intro} for twisted group \ca{s} of \emph{nilpotent groups}, that is, groups for which the upper central series $Z_0, Z_1, Z_2, \ldots$ given recursively by $Z_0 = \{ e \}$ and $Z_{i+1}/Z_i={\rm Z}(G/Z_i)$ for $i \geq 1$ terminates at $Z_n = G$ for some finite number $n$. Concretely, we answer \cref{qst:Intro} in the affirmative for twisted group \ca{s} of finitely generated nilpotent groups\footnote{For this class of groups, finite decomposition rank was shown in \cite{EcGilMc19FdrVirtNil,EcMcKee18}.}:

\begin{thmIntro}[cf.~\ref{prp:StrIrrCharNSCA}]\label{prp:thmA}
    Let $G$ be a finitely generated nilpotent group, and let $\sigma$ be a $2$-cocycle on $G$. Then, the following conditions are equivalent:
    \begin{enumerate}
        \item $C^*_r (G,\sigma )$ is nowhere scattered;
        \item $C^*_r (G,\sigma )$ is pure;
        \item $C^*_r (G,\sigma )$ is $\mathcal{Z}$-stable;
        \item $\sigma$ is non-rational.
    \end{enumerate}    
\end{thmIntro}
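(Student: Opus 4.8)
The plan is to organize the equivalences around the implications $(iii)\Rightarrow(ii)\Rightarrow(i)\Rightarrow(iv)\Rightarrow(iii)$, using the general structure theory of nilpotent groups to reduce everything to a careful analysis of how the $2$-cocycle $\sigma$ interacts with the center. The implications $(iii)\Rightarrow(ii)$ and $(ii)\Rightarrow(i)$ are essentially formal: $\mathcal{Z}$-stability is known to imply pureness (strict comparison plus almost divisibility in the Cuntz semigroup), and pureness implies nowhere scatteredness since a scattered quotient-ideal would contain a nonzero elementary, hence non-pure, ideal. So the real content is in the two remaining arrows, and in pinning down the correct meaning of ``non-rational'' for $\sigma$.

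For $(i)\Rightarrow(iv)$ I would argue by contraposition: if $\sigma$ is rational, then I expect $C^*_r(G,\sigma)$ to have a finite-dimensional (hence elementary) quotient or ideal-quotient, contradicting nowhere scatteredness. Concretely, rationality of $\sigma$ should mean that after restricting to a finite-index subgroup (or passing to a quotient that kills the "irrational part" of the symplectic form on the center), the twisted group algebra becomes that of a virtually abelian group with a cocycle taking values in roots of unity, which admits finite-dimensional representations; one then locates such a representation inside an ideal-quotient of $C^*_r(G,\sigma)$. The key tool here is the decomposition of $C^*_r(G,\sigma)$ over the center: writing $Z = {\rm Z}(G)$ and using the dual $\dual{Z}$, one fibers $C^*_r(G,\sigma)$ over a subset of $\dual{Z}$ and identifies each fiber with a twisted group algebra of $G/Z$ (with a character-dependent cocycle), then induces along the lower central series.

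For $(iv)\Rightarrow(iii)$, the $\mathcal{Z}$-stability direction, I would again fiber over the center and push $\mathcal{Z}$-stability through the resulting structure. The strategy is: (a) reduce to the case where the cocycle is "totally non-rational" on the center by a central-character decomposition; (b) show that for such characters the fiber algebra is a twisted group \ca{} of a nilpotent group of strictly smaller class that is still non-rational, so that an induction on the nilpotency class applies, with the base case being noncommutative tori / twisted $\Z^k$-algebras that are known to be $\mathcal{Z}$-stable (indeed AT-algebras of real rank zero) precisely when the cocycle is irrational; and (c) use a permanence result — $\mathcal{Z}$-stability passing up from fibers of a suitable continuous field, or from $C^*_r(Z,\sigma)$-stability combined with the generalized fixed-point / crossed-product description $C^*_r(G,\sigma)\cong C^*_r(Z,\sigma)\rtimes_r (G/Z)$ — to conclude. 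The main obstacle I anticipate is step (c): controlling $\mathcal{Z}$-stability under the iterated crossed products by $G/Z$, since these quotients need not be abelian and the action on the spectrum of the center is nontrivial; here finite nuclear dimension (known for this class by \cite{EcGilMc19FdrVirtNil,EcMcKee18}) together with the non-rationality hypothesis must be leveraged to rule out "charge points" in the spectrum where the field degenerates to a finite-dimensional fiber. A secondary difficulty is purely bookkeeping: formulating "non-rational" so that it is manifestly preserved under the reduction steps (restriction to finite-index subgroups, central quotients) — I would define it via the symplectic form $\sigma$ induces on the relevant abelianized central sections having no nontrivial rational (finite-order) radical, and verify this invariance before running the induction.
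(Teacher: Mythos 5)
Your overall architecture --- the cycle $(iii)\Rightarrow(ii)\Rightarrow(i)\Rightarrow(iv)\Rightarrow(iii)$, a fibering of $C^*_r(G,\sigma)$ over characters of a central subgroup, and an induction using permanence of $\mathcal{Z}$-stability for continuous fields \`a la Hirshberg--R{\o}rdam--Winter --- is the same as the paper's. But there are three concrete gaps in the plan as written. First, you propose to fiber over (a subset of) $\dual{\mathrm{Z}(G)}$. The Packer--Raeburn decomposition only applies to a central subgroup consisting of $\sigma$-\emph{regular} elements, i.e.\ a subgroup of the twisted center $\mathrm{Z}(G,\sigma)$; the full center does not qualify in general (already for noncommutative tori). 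The entire induction, and the very definition of ``non-rational,'' must be run with $\mathrm{Z}(G,\sigma)$, and the cocycles on the quotient $G/\mathrm{Z}(G,\sigma)$ depend on the chosen character $\gamma$; non-rationality is an iterated condition on all these character-twisted cocycles $\omega_\gamma$ at every stage, not a one-step statement about the radical of a symplectic form on the center. Second, your induction is on the nilpotency class, with the claim that each fiber is a twisted group algebra of a group ``of strictly smaller class.'' This fails: quotienting by $\mathrm{Z}(G,\sigma)$, which may be a proper subgroup of $\mathrm{Z}(G)$, need not lower the nilpotency class (e.g.\ $H_3(\Z)/2\Z$ is still $2$-step nilpotent). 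The paper instead inducts on the Hirsch length, exploiting $h(G)=h(\mathrm{Z}(G,\sigma))+h(G/\mathrm{Z}(G,\sigma))$, so that either the quotient has strictly smaller Hirsch length or $\mathrm{Z}(G,\sigma)$ is finite.

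Third, the case $\mathrm{Z}(G,\sigma)$ finite with $G$ infinite is the real base case and is not addressed in your plan: there the bundle over $\dual{\mathrm{Z}(G,\sigma)}$ has only finitely many fibers and no reduction takes place, and the base case is not a noncommutative torus. The paper handles it (Proposition~\ref{prp:TwistCentFin}) by passing to a torsion-free finite-index subgroup $N$, showing $\mathrm{Z}(N,\sigma)=\mathrm{FC}(N,\sigma)$ is then trivial (Lemma~\ref{prp:FinIndInc}), invoking the Eckhardt--Wu theorem to get that $C^*_r(N,\sigma)$ is simple and $\mathcal{Z}$-stable, and then deducing that $C^*_r(G,\sigma)$ is a finite direct sum of such algebras. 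Your alternative route for step (c) via a crossed product $C^*_r(\mathrm{Z},\sigma)\rtimes_r(G/\mathrm{Z})$ is not needed and would be harder; the continuous-field permanence result you also mention is the right tool, and it applies because $\dual{\mathrm{Z}(G,\sigma)}$ is compact, metrizable and finite-dimensional when $G$ is finitely generated nilpotent. For $(i)\Rightarrow(iv)$ your contrapositive idea is workable in spirit, but the clean argument is direct: nowhere scatteredness passes to all fibers of the iterated decomposition (since fibers are quotients), so every iterated quotient group must be infinite, which is exactly non-rationality.
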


Statement (iv) of \cref{prp:thmA} is a condition in terms of the $2$-cocycle which generalizes the notion of non-rationality for higher-dimensional non-commutative tori (\cref{dfn:IrrCocDec}). Loosely, a $2$-cocycle on $G$ is \emph{non-rational} if all iterated quotients of $G$ by twisted centers\footnote{As defined in the paper, the twisted center ${\rm Z}(G,\sigma)$ of a group $G$ with respect to a 2-cocycle $\sigma$ consists of those central elements $g \in G$ such that $\sigma (g,h)=\sigma (h,g)$ for all $h$ in $G$.} are infinite. For example, the condition is equivalent to $[G:{\rm Z}(G,\sigma)]=\infty$ for any abelian group; see \ref{subsec:AbelIntro} below. It is well known that $\mathcal{Z}$-stability implies pureness (a Cuntz semigroup version of $\mathcal{Z}$-stability; see \ref{pgr:Pure}), and that pureness implies nowhere scatteredness. Thus, the proof of \cref{prp:thmA} reduces to showing that (i) implies (iv), and that (iv) implies (iii). We do this in \cref{lma:NSCAimpInfInd} and \cref{prp:StrIrrCharNSCA} respectively.

With view towards future structure results on strict comparison (instead of the stronger property of $\mathcal{Z}$-stability), we also use \cite[Theorem~A]{EckWu24} to state a dimension reduction phenomenon for virtually polyclycic groups (\cref{prp:NilStCompIFFNSCa}). This shows that, to prove strict comparison of $C^*_r(G,\sigma)$, it is enough to show sufficient non-commutativity conditions of the algebra. Although we do not use it in our proof, such a result can be employed to prove the equivalence between (i) and (ii) in \cref{prp:thmA}.

In addition to the main applications outlined below, we also briefly discuss the property of nowhere scatteredness for untwisted reduced group \ca{s} in \cref{sec:GenResGroups}. As explained in \cref{prp:GNonAme}, all groups of such \ca{s} must be non-amenable. We provide in \cref{prp:PermPropCG} examples of groups $G$ such that $C^*_r(G)$ is nowhere scattered (and not simple), which include all groups of the form $T\times H$ for any discrete group $H$ and any $\mathrm{C}^*$-simple group $T$.

\subsection{\texorpdfstring{Abelian and $2$-step nilpotent groups}{Abelian and 2-step nilpotent groups}}\label{subsec:AbelIntro}

In Subsections \ref{subsec:AbGrp} and \ref{subsec:GenHeis} we specialize \cref{prp:thmA} to abelian groups and certain $2$-step nilpotent groups, such as generalized discrete Heisenberg groups. For such families of groups, a $2$-cocycle being non-rational can be neatly characterized, which leads to easily checked $2$-cocycle conditions that are equivalent to $\mathcal{Z}$-stability. (For the notation in \cref{prp:thmD}, see \ref{pgr:IndCoc}.)

\begin{thmIntro}[\ref{prp:AbeCharStComp}]\label{prp:thmC}
    Let $G$ be a finitely generated abelian group with a $2$-cocycle $\sigma$. Then, $C^*_r(G,\sigma)$ is $\mathcal{Z}$-stable if and only if $[G:\mathrm{Z}(G,\sigma)]=\infty$.
\end{thmIntro}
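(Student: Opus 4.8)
The plan is to deduce \cref{prp:thmC} directly from \cref{prp:thmA} by showing that, for a finitely generated abelian group $G$, a $2$-cocycle $\sigma$ is non-rational in the sense of \cref{dfn:IrrCocDec} if and only if $[G:\mathrm{Z}(G,\sigma)]=\infty$. Once this equivalence is established, statement (iv) of \cref{prp:thmA} becomes exactly the stated index condition, and the equivalence of (iii) and (iv) in that theorem gives the result. So the entire content of the proof is the identification of non-rationality with the index condition in the abelian case.

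For this, I would first recall that when $G$ is abelian the twisted center $\mathrm{Z}(G,\sigma)$ is simply the \emph{symmetry subgroup} (or \emph{radical}) of the antisymmetric bicharacter $(g,h)\mapsto \sigma(g,h)\sigma(h,g)^{-1}$ associated to $\sigma$; that is, $\mathrm{Z}(G,\sigma)=\{g\in G: \sigma(g,h)=\sigma(h,g)\text{ for all }h\in G\}$ since every element of an abelian group is central. The key structural fact is that the quotient $G/\mathrm{Z}(G,\sigma)$ carries a nondegenerate antisymmetric bicharacter, hence is itself abelian; moreover one checks that the image $\bar\sigma$ of $\sigma$ on this quotient again has trivial twisted center, i.e. $\mathrm{Z}(G/\mathrm{Z}(G,\sigma),\bar\sigma)$ is trivial. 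This means the iterated-quotient procedure in the definition of non-rationality stabilizes after a single step for abelian groups: the iterated quotients are $G$, then $G/\mathrm{Z}(G,\sigma)$, and thereafter the twisted center is trivial so the quotients stop changing. Consequently $\sigma$ is non-rational precisely when every quotient in this (now length-one) chain is infinite, and since $G$ itself is either finite or infinite, the only nontrivial condition is that $G/\mathrm{Z}(G,\sigma)$ be infinite, i.e. $[G:\mathrm{Z}(G,\sigma)]=\infty$. (When $G$ is finite, $C^*_r(G,\sigma)$ is finite-dimensional, hence never $\mathcal{Z}$-stable and never nowhere scattered, and indeed the index is finite, so both sides of the asserted equivalence fail; I would note this edge case explicitly or observe it is absorbed into the general argument.)

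The main obstacle is the bookkeeping around the definition of non-rationality (\cref{dfn:IrrCocDec}): I need to confirm that the definition, when specialized to abelian $G$, really does reduce to checking the single index $[G:\mathrm{Z}(G,\sigma)]$ and does not impose extra conditions coming from deeper levels of the iterated-quotient tower. The crucial lemma is that $\mathrm{Z}(G/\mathrm{Z}(G,\sigma),\bar\sigma)$ is trivial, which follows from nondegeneracy of the induced bicharacter on $G/\mathrm{Z}(G,\sigma)$; this is a standard fact about symplectic-type forms on abelian groups but should be stated and proved cleanly here. With that lemma in hand, the proof of \cref{prp:thmC} is a short deduction: combine it with \cref{prp:thmA}(iii)$\Leftrightarrow$(iv).

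For the record, the argument also recovers the familiar non-commutative torus picture: if $G=\Z^d$ and $\sigma$ corresponds to a skew-symmetric matrix $\Theta$, then $\mathrm{Z}(G,\sigma)$ is the kernel of $\Theta$ modulo $\Z^d$-integrality conditions, and $[G:\mathrm{Z}(G,\sigma)]=\infty$ is exactly the classical non-rationality (or "sufficiently irrational") condition guaranteeing $A_\Theta$ is a simple $\mathcal{Z}$-stable AT-algebra; I would include this as a remark after the proof to orient the reader.
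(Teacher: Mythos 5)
Your proposal is correct and follows essentially the same route as the paper: the paper's \cref{prop:IrrAb} proves precisely your key claim that the twisted center of $G/\mathrm{Z}(G,\sigma)$ equipped with the induced cocycles is trivial, so the $2$-cocycle decomposition stabilizes after one step and non-rationality reduces to $[G:\mathrm{Z}(G,\sigma)]=\infty$, after which \cref{prp:StrIrrCharNSCA} gives the conclusion. The only detail worth making explicit is that the fibre cocycles in the decomposition are the character-twisted $\omega_\gamma$ of \ref{pgr:IndCoc} rather than a single induced $\bar\sigma$; since $G$ is abelian the section commutators $c(g)c(h)c(g)^{-1}c(h)^{-1}$ are trivial, so $\omega_\gamma$-regularity coincides with $\omega$-regularity and your nondegeneracy argument applies uniformly in $\gamma$.
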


\begin{thmIntro}[\ref{prp:NSCAGenHeis}]\label{prp:thmD}
    Let $H=H(d_1,\ldots ,d_n)$ be a generalized discrete Heisenberg group, and let $\sigma$ be a $2$-cocycle on $H$. Then, $C^*_r(H,\sigma)$ is $\mathcal{Z}$-stable if and only if $[M : \mathrm{Z}(M ,{\rm Res}(\omega_\gamma))]=\infty$ for all $\gamma \in \dual{\mathrm{Z}(G,\sigma)}$, where $M = \ker (\varphi_{D}(\omega))$, $D=\mathrm{Z}(H)/\mathrm{Z}(H,\sigma)$, and $\omega$ is a $2$-cocycle on $H/{\rm Z}(H,\sigma)$ whose inflation is cohomologous to $\sigma$.
\end{thmIntro}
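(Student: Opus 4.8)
The plan is to reduce \cref{prp:thmD} to \cref{prp:thmA} by unpacking what non-rationality of $\sigma$ means for a generalized discrete Heisenberg group $H = H(d_1, \ldots, d_n)$. Recall that $H$ is $2$-step nilpotent with center $\mathrm{Z}(H)$, so the lower central series is short, and the twisted center $\mathrm{Z}(H, \sigma)$ is a subgroup of $\mathrm{Z}(H)$. By \cref{prp:thmA}, $C^*_r(H,\sigma)$ is $\mathcal{Z}$-stable if and only if $\sigma$ is non-rational, i.e.\ every iterated quotient of $H$ by twisted centers is infinite. The first step is to observe that the first quotient $H/\mathrm{Z}(H,\sigma)$ is automatically infinite (since $H$ is infinite and $\mathrm{Z}(H,\sigma)$ has finite index only in degenerate cases that can be ruled out, or more carefully: this is exactly where the hypothesis enters), so the real content of non-rationality is a single further condition on the group $H/\mathrm{Z}(H,\sigma)$ equipped with the induced cocycle $\omega$.

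Next I would analyze the structure of $H/\mathrm{Z}(H,\sigma)$ with its cocycle $\omega$ (well-defined up to coboundary, with inflation cohomologous to $\sigma$, as in \ref{pgr:IndCoc}). Since $\mathrm{Z}(H,\sigma) \leq \mathrm{Z}(H)$, the group $D = \mathrm{Z}(H)/\mathrm{Z}(H,\sigma)$ is central in $H/\mathrm{Z}(H,\sigma)$. The homomorphism $\varphi_D(\omega)\colon D \to \dual{(H/\mathrm{Z}(H,\sigma))}$ (or rather into the dual of the appropriate quotient) records the asymmetry of $\omega$ on $D$; its kernel $M = \ker(\varphi_D(\omega))$ is precisely the part of $D$ that becomes ``twisted-central'' at the next stage. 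The key computation is then: the twisted center of $H/\mathrm{Z}(H,\sigma)$ with respect to $\omega$ is governed by $M$, and decomposing over the dual $\dual{\mathrm{Z}(G,\sigma)}$ via a direct integral / Mackey-type decomposition of $\omega$ restricted to fibers, the condition that the \emph{second} iterated quotient is infinite becomes exactly $[M : \mathrm{Z}(M, \mathrm{Res}(\omega_\gamma))] = \infty$ for all $\gamma \in \dual{\mathrm{Z}(G,\sigma)}$. Because $H$ is $2$-step nilpotent, no further iterated quotients need to be checked — after two steps the process stabilizes at a group where the cocycle is symmetric — so these two conditions exhaust non-rationality.

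The main obstacle I expect is the bookkeeping in the second step: correctly identifying the induced cocycle $\omega$ on $H/\mathrm{Z}(H,\sigma)$, tracking how its (a)symmetry behaves under the central extension by $D$, and showing that the twisted center of the quotient group decomposes over $\dual{\mathrm{Z}(G,\sigma)}$ in the stated way via the restrictions $\mathrm{Res}(\omega_\gamma)$. This requires care with the cohomological inflation-restriction machinery and with the fact that $\varphi_D(\omega)$ may have a nontrivial kernel $M$ on which $\omega$ restricts to a genuinely new cocycle. Once this decomposition is in place, the equivalence with $\mathcal{Z}$-stability is immediate from \cref{prp:thmA}, since all higher iterated twisted-center quotients of a $2$-step nilpotent group are infinite precisely when the finitely many conditions above hold. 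I would also double-check the edge cases where $\mathrm{Z}(H,\sigma)$ is trivial or equals $\mathrm{Z}(H)$, to confirm the criterion degenerates correctly (e.g.\ recovering \cref{prp:thmC}-type statements when $H$ is abelian).
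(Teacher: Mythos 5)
Your proposed route --- unpacking the non-rationality condition of \cref{prp:thmA} directly for the $2$-step nilpotent group $H$ --- is genuinely different from what the paper does, and as written it has a gap that I do not see how to close. The paper's proof of \cref{prp:NSCAGenHeis} does not translate the iterated twisted-center quotients into the condition on $M$ at all. Instead it invokes \cref{prp:CharNSCAforSome2step}, whose proof rests on the Lee--Packer structure theorem: $C^*_r(H,\sigma)$ is a $C_0(\dual{\mathrm{Z}(H,\sigma)})$-algebra whose fibers are isomorphic to $C^*_r(M/D,\tau_\gamma)\otimes \mathcal{K}(L^2(G/M))$ with $M/D$ \emph{abelian}. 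Nowhere scatteredness then passes to the fibers by \cref{prp:BundlesNSCa}, the abelian characterization ($[\,\cdot : \mathrm{Z}(\cdot,\tau_\gamma)]=\infty$, \cref{prop:IrrAb}/\cref{prop:field-ns}) is applied to each fiber, and a cocycle computation identifies $[M/D:\mathrm{Z}(M/D,\tau_\gamma)]$ with $[M:\mathrm{Z}(M,\mathrm{Res}(\omega_\gamma))]$. Finally, that $D=\mathrm{Z}(H)/\mathrm{Z}(H,\sigma)$ satisfies hypotheses (i)--(iii) of \cref{prp:CharNSCAforSome2step} is taken from the proof of Theorem~3.4 in Lee--Packer, which uses an explicit classification of $2$-cocycles on $H(d_1,\ldots,d_n)$ up to cohomology. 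None of this structural input appears in your outline, and it is exactly what connects the subgroup $M$ to the C*-algebra.

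Concretely, two points in your proposal are off. First, you describe $\varphi_D(\omega)$ as a map $D\to\dual{(H/\mathrm{Z}(H,\sigma))}$ and call $M$ ``the part of $D$ that becomes twisted-central at the next stage''; in fact $\varphi_D(\omega)\colon H/\mathrm{Z}(H,\sigma)\to\dual{D}$, so $M=\ker(\varphi_D(\omega))$ is the $\omega$-centralizer of $D$ inside $H/\mathrm{Z}(H,\sigma)$ --- an abelian subgroup \emph{containing} $D$, typically much larger than the center of the quotient. The index $[M:\mathrm{Z}(M,\mathrm{Res}(\omega_\gamma))]$ is therefore not the index of a twisted center in an iterated quotient $G^i_\lambda$, and it is not clear that it can be read off from the $2$-cocycle decomposition of \cref{dfn:CocyDecomp}. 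Second, your claim that for a $2$-step nilpotent group ``after two steps the process stabilizes at a group where the cocycle is symmetric'' is unjustified: $H/\mathrm{Z}(H,\sigma)$ need not be abelian, and the decomposition can in principle continue for several steps (bounded by the Hirsch length, but not by $2$). If you want to salvage your strategy you would have to prove an independent lemma equating non-rationality of $\sigma$ with the condition on $M$; the paper avoids this by working with the Lee--Packer fibration, whose fibers are stably isomorphic to twisted group C*-algebras of the abelian group $M/D$, so that only the abelian case of the main theorem is needed.
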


\cref{prp:thmC} recovers the results of high-dimensional non-commutative tori from \cite{MR1189278}, while the relevance of \cref{prp:thmD} lies in the fact that no generalized discrete Heisenberg group with $n\geq 2$ gives rise to a simple twisted group \ca{} \cite[Corollary~3.5]{LeePac95Twis2Step}.

\subsection{Generalized time-frequency analysis}\label{subsec:TimeFreqIntro}

We apply our main result to the field of time-frequency analysis \cite{Gr01}. Among the main objects of study in this area are Gabor systems, which are sequences of time-frequency shifts of a single window function in $L^2(\R^d)$. A central problem is to determine which lattices in phase space admit a Gabor frame ---i.e. a Gabor system that has the frame property \eqref{eq:frame}--- or a Riesz sequence \eqref{eq:riesz} (sometimes with additional regularity properties imposed on the window). Several instances of this problem have been completely or partially resolved using techniques from operator algebras \cite{Be04,JaLu20,EnsThiVil24}.

Gabor systems are special cases of so-called coherent systems arising from the representation theory of nilpotent Lie groups. The analysis of such function systems has been coined \emph{generalized time-frequency analysis} \cite{Gr21} and has recently received attention in several works; see e.g. \cite{Ou24,GrRo18,GrRoRova20,va24}.

To set the stage, let $G$ be a connected, simply connected, nilpotent Lie group and let $\pi$ be a projective irreducible unitary representation of $G$ on a Hilbert space $\mathcal{H}_\pi$. Here, \emph{projective} means that
\[ \pi(g)\pi(h) = \sigma(g,h)\pi(gh) , \quad g,h \in G, \]
where $\sigma$ is an associated continuous $2$-cocycle on $G$. We assume that $\pi$ is \emph{square-integrable}, that is, all matrix coefficients of $\pi$ lie in $L^2(G)$. Under these assumptions there exists a number $d_\pi > 0$ (depending on the Haar measure of $G$) such that
\[
\int_G | \langle \xi, \pi(g) \eta \rangle |^2 \dif{g} = d_\pi^{-1} \| \xi \|^2 \| \eta \|^2, \quad \xi, \eta \in \mathcal{H}_\pi .
\]
Given a lattice $\Gamma \subseteq G$ and a \emph{window} $\eta \in \mathcal{H}_\pi$ we can form the associated \emph{coherent system} $\pi(\Gamma) \eta = (\pi(\gamma) \eta)_{\gamma \in \Gamma}$. We say that $\pi(\Gamma)$ is a \emph{frame} if there exist constants $c, C > 0$ such that
\begin{equation}
    c \| \xi \|^2 \leq \sum_{\gamma \in \Gamma} | \langle \xi, \pi(\gamma) \eta \rangle |^2 \leq C \| \xi \|^2 \quad \text{for all } \xi \in \mathcal{H}_\pi, \label{eq:frame}
\end{equation}
and a \emph{Riesz sequence} if there exist $c, C > 0$ such that
\begin{equation}
    c \| a \|_2^2 \leq \Big\| \sum_{\gamma \in \Gamma} a_\gamma \pi(\gamma) \eta \Big\|^2 \leq C \| a \|_2^2 \quad \text{for all } a = (a_\gamma)_{\gamma \in \Gamma} \in \ell^2(\Gamma) . \label{eq:riesz}
\end{equation}
Of special interest are frames and Riesz sequences where the associated window $\eta$ is \emph{smooth} in the sense that the matrix coefficient $g \mapsto \langle \eta, \pi(g) \eta \rangle$ is a Schwartz function on $G$; cf.\ \cite{Pe94,BeEnva22}. We denote the set of these vectors by $\mathcal{H}_\pi^\infty$. For homogeneous nilpotent groups $G$ it has been proved \cite{GrRoRova20} that a lattice $\Gamma \subseteq G$ can admit a coherent frame (resp.\ a Riesz sequence) with smooth window only if $d_\pi \covol(\Gamma) < 1$ (resp.\ $d_\pi \covol(\Gamma) > 1$). This is a generalization of the \emph{Balian--Low Theorem} for Gabor frames, and is expected to hold for general nilpotent Lie groups.

Using \cref{prp:thmA}, we obtain the following partial converse to this generalized version of the Balian--Low Theorem.

\begin{thmIntro}[\ref{prp:ThmEMain}]\label{thm:gfta}
Let $\pi$ be a $\sigma$-projective, square-integrable, irreducible, unitary representation of a connected, simply connected, nilpotent Lie group. Suppose that $\Gamma$ is a lattice in $G$ such that the restriction of $\sigma$ to $\Gamma$ is non-rational. Then the following hold:
\begin{enumerate}
    \item If $d_\pi \covol(\Gamma) < 1$, then there exists $\eta \in \mathcal{H}_\pi^\infty$ such that $\pi(\Gamma) \eta$ is a frame.
    \item If $d_\pi \covol(\Gamma) > 1$, then there exists $\eta \in \mathcal{H}_\pi^\infty$ such that $\pi(\Gamma) \eta$ is a Riesz sequence.
\end{enumerate}
\end{thmIntro}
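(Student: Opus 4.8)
The strategy is to convert both statements into questions about the twisted group $\mathrm{C}^*$-algebra $C^*_r(\Gamma, \sigma|_\Gamma)$ and then invoke Theorem A. The bridge is the standard dictionary between coherent systems and Hilbert modules over twisted group algebras. First I would recall that, because $\pi$ is square-integrable with formal dimension $d_\pi$, the map sending $a = (a_\gamma)_\gamma \in \ell^1(\Gamma,\sigma|_\Gamma)$ to the operator $\sum_\gamma a_\gamma \pi(\gamma)$ extends to a faithful representation of $C^*_r(\Gamma,\sigma|_\Gamma)$ on $\mathcal H_\pi$, and that $\mathcal H_\pi$ becomes a (right or left) Hilbert $C^*_r(\Gamma,\sigma|_\Gamma)$-module whose "size" is governed by $d_\pi \covol(\Gamma)$. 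Concretely, in the regime $d_\pi\covol(\Gamma) < 1$ the module $\mathcal H_\pi$ is finitely generated and the existence of a frame $\pi(\Gamma)\eta$ is equivalent to the existence of a single generator, i.e.\ to $\mathcal H_\pi$ being (as a Hilbert module) a "cyclic" or singly-generated projective module; dually, in the regime $d_\pi\covol(\Gamma) > 1$, the existence of a Riesz sequence $\pi(\Gamma)\eta$ corresponds to embedding the standard module $\ell^2(\Gamma)$, i.e.\ a copy of the algebra itself, as an orthocomplemented submodule of $\mathcal H_\pi$. The smoothness requirement $\eta\in\mathcal H_\pi^\infty$ is matched on the algebra side by working with the smooth (Schwartz) subalgebra, exactly as in the setup of \cite{BeEnva22}; one needs the relevant projections/module generators to be approximable within, or conjugate into, this smooth subalgebra, which is possible because the smooth subalgebra is dense and closed under holomorphic functional calculus.

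Next I would reduce the two existence statements to a comparison of Cuntz/projection classes, where $\mathcal Z$-stability does the real work. Writing $A = C^*_r(\Gamma,\sigma|_\Gamma)$, the module $\mathcal H_\pi$ corresponds to a projection $p$ in $A\otimes\mathcal K$ (or a matrix algebra over $A$) with normalized trace equal to $d_\pi\covol(\Gamma)$ against the canonical tracial state of $A$ (this is essentially the coupling-constant computation). Part (i) asks: when $\tau(p) < 1 = \tau(1_A)$, is $p$ Murray–von Neumann subequivalent to $1_A$ (so that $\mathcal H_\pi$ is singly generated)? Part (ii) asks: when $\tau(p) > 1$, does $1_A \precsim p$, i.e.\ does $A$ embed as an orthocomplemented submodule? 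Both are comparison-of-positive-elements statements. Since $\sigma|_\Gamma$ is nonrational and $\Gamma$ is a finitely generated nilpotent group, Theorem A gives that $A$ is $\mathcal Z$-stable, hence has strict comparison of positive elements (and real rank conditions good enough to promote Cuntz subequivalence to the genuine module statements). As $A$ is also unital with a unique trace (being a twisted group algebra of a nilpotent group with the relevant nonrationality—this uniqueness I would either cite or derive from the structure of the twisted center), strict comparison reduces the two statements to the strict trace inequalities $\tau(p) < \tau(1_A)$ and $\tau(1_A) < \tau(p)$ respectively, which are exactly the hypotheses $d_\pi\covol(\Gamma) < 1$ and $d_\pi\covol(\Gamma) > 1$.

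The one subtlety I expect to be the main obstacle is the smoothness of the window, i.e.\ arranging that the generator (part (i)) or the isometry onto the submodule (part (ii)) can be taken with $\eta\in\mathcal H_\pi^\infty$ rather than merely $\eta\in\mathcal H_\pi$. Strict comparison is a statement about the $\mathrm{C}^*$-algebra and a priori only produces an $L^2$ window. To upgrade, I would use that the Schwartz subalgebra $\mathcal S(\Gamma,\sigma|_\Gamma)$ (or the appropriate smooth subalgebra of $A$) is a dense, spectrally invariant (holomorphically closed) $*$-subalgebra, so projections and partial isometries witnessing the comparison can be perturbed into it, and then the vector $\eta$ obtained by applying such a smoothed element to a fixed smooth vector stays in $\mathcal H_\pi^\infty$; here one invokes that $\mathcal H_\pi^\infty$ is a module over the smooth subalgebra and is mapped into itself by the smoothed operators. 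This is the technical heart and would follow the pattern already established for Gabor systems in \cite{Be04,EnsThiVil24}, adapted to the general nilpotent Lie group setting using the Schwartz-space machinery of \cite{BeEnva22}. Everything else—the module-frame dictionary, the trace/coupling-constant computation, and the application of Theorem A—is by now standard or immediate from the results quoted above.
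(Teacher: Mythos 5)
Your overall route is the same as the paper's: pass to the Hilbert $\mathrm{C}^*$-module over $\mathrm{C}^*(\Gamma,\sigma|_\Gamma)$ from \cite[Theorem 6.5]{BeEnva22}, whose trace is $d_\pi\covol(\Gamma)$, get $\mathcal{Z}$-stability (hence strict comparison) from Theorem~\ref{prp:thmA}, and conclude as in \cite[Theorem 6.6]{BeEnva22}, with smoothness of the window handled by the spectrally invariant Schwartz subalgebra exactly as you describe. That part is fine.

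The genuine gap is your claim that $A=\mathrm{C}^*(\Gamma,\sigma|_\Gamma)$ has a unique tracial state. For twisted group $\mathrm{C}^*$-algebras of amenable groups, uniqueness of the trace is equivalent to Kleppner's condition, i.e.\ to simplicity --- and the entire point of this theorem is to go beyond the Kleppner case. Non-rationality of $\sigma|_\Gamma$ does \emph{not} imply Kleppner's condition (see the generalized Heisenberg groups, which never yield simple algebras for $n\geq 2$, and the example at the end of Section~\ref{sec:AppTimFre} where $\alpha\beta$ is rational), so in the new cases covered here $A$ is non-simple and carries many tracial states. Since strict comparison requires the strict inequality $d_\tau(p)<d_\tau(1)$ (resp.\ the reverse) for \emph{every} trace $\tau$, your argument breaks down exactly where the theorem is new. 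The repair is not uniqueness of the trace but agreement of all traces on the class of the module: $\Gamma$ is a torsion-free, finitely generated, nilpotent group, hence of polynomial growth, and by \cite[Theorem 1]{Ji95Trace} every tracial state on $\mathrm{C}^*(\Gamma,\sigma)$ assigns the same value $d_\pi\covol(\Gamma)$ to $\mathcal{E}_\Gamma$. With that substitution (which is the one additional input in the paper's proof), your argument goes through.
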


The statement of \cref{thm:gfta} above subsumes the main result of \cite{BeEnva22}, where this was shown under the stronger hypothesis that $(\Gamma,\sigma)$ satisfies Kleppner's condition ---a property equivalent to the simplicity of $\mathrm{C}^*(\Gamma,\sigma)$. It also generalizes \cite[Theorem 5.4]{JaLu20} from the setting of Gabor frames to the setting of coherent systems arising from nilpotent Lie groups.

\noindent\textbf{Acknowledgements.} UE was supported by the Research Council of Norway (Project Number 314048). EV was partially supported by MINECO (grants No.\ PID2020-113047GB-I00/AEI/10.13039/501100011033 and No.\ PID2023-147110NB-I00), by the Comissionat per Universitats i Recerca de la Generalitat de Catalunya (grant No. 2021-SGR-01015) and by the Knut and
Alice Wallenberg Foundation (KAW 2021.0140).

Part of this work was carried out during visits to the University of Potsdam, the Fields Institute for Research in Mathematical Sciences, Chalmers University of Technology and the University of Oslo. We are grateful to all the institutions for their hospitality. UE thanks Erik Bedos and Tron Omland for helpful discussions.

\noindent\textbf{Notation.} Throughout the paper, all tensor products are minimal and we denote them simply by $A\otimes B$. Most of the groups appearing in this paper (all except those in \cref{sec:AppTimFre}) are discrete, as we make explicit in every statement and definition.

\section{Nowhere scatteredness and pureness}

In this first section we recall the notions of nowhere scatteredness, pureness and strict comparison. We exhibit a relation between these properties in \cref{prp:PureFinDR}, which allows one to deduce pureness from nowhere scatteredness for virtually nilpotent groups (\cref{prp:NilStCompIFFNSCa}). Further, we study how this property behaves with tensor products and $C(X)$-algebras; see Propositions \ref{prp:TensorNSca} and \ref{prp:BundlesNSCa} respectively.

Some of the results below require Cuntz semigroup techniques. This invariant will only appear in this section, where we will use it as a tool to deduce nowhere scatteredness. If the reader wishes to focus on \cref{prp:thmA} (concretely, in (i)~implies~(iii)), the relevant parts here are the definition of nowhere scatteredness (\ref{pgr:NS}) and \cref{prp:BundlesNSCa}, neither of which uses the Cuntz semigroup. Otherwise, the reader is referred to \cite{GarPer24arX} for a modern introduction to the invariant.

\begin{pgr}[Nowhere scatteredness]\label{pgr:NS}
    As defined in the introduction, a \ca{} $A$ is said to be \emph{nowhere scattered} if no nonzero ideal-quotient of $A$ is elementary; see \cite[Theorem~3.1]{ThiVil24NSCa}. This property is well-behaved and it is preserved, for example, under inductive limits and Morita equivalences; see \cite[Section~4]{ThiVil24NSCa}. For von Neumann algebras, nowhere scatteredness is equivalent to having no type I summand (\cite[Proposition~3.4]{ThiVil24NSCa}).

    By \cite[Theorem~8.9]{ThiVil24NSCa}, a \ca{} is nowhere scattered if and only every element $[a]$ in its Cuntz semigroup $\Cu (A)$ is \emph{weakly $(2,\omega )$-divisible}, that is, if for every $\varepsilon>0$ there exists $[c_1],\ldots ,[c_n]\in \Cu (A)$ such that $2[c_l]\leq [a]$ for each $l$ and $[(a-\varepsilon)_+]\leq [c_1]+\ldots +[c_n]$. Using the same arguments as in \cite[Lemma~3.5]{ThiVil23Glimm}, one can see that every element in $\Cu (A)$ is weakly $(2,\omega )$-divisible if and only if every element of the form $[a]$ with $a\in A_+$ is.
\end{pgr}

\begin{pgr}[Pureness]\label{pgr:Pure}
    A \ca{} $A$ is \emph{pure} \cite{Win12Pure} if its Cuntz semigroup $\Cu (A)$ is
    \begin{itemize}
        \item \emph{almost divisible}, that is, for every $\varepsilon>0$ and $n\in\N$ there exists $[d]\in\Cu (A)$ such that $n[d]\leq [a]$ and $[(a-\varepsilon )_+]\leq (n+1)[d]$;
        \item \emph{almost unperforated}, that is, $[a]\leq [b]$ whenever $(m+1)[a]\leq m[b]$ for some $m\in\N$.
    \end{itemize}

    Almost unperforation of the Cuntz semigroup is equivalent to \emph{strict comparison}; see \cite[Proposition~6.2]{EllRorSan11Cone}. Here, recall that $A$ has strict comparison if, for any pair $[a],[b]\in\Cu (A)$, one has $[a]\leq [b]$ in $\Cu (A)$ whenever $a$ is in the ideal generated by $b$ and
    \[
        d_\tau (a)<d_\tau (b)
    \]
    for all $2$-quasitraces $\tau$ on $A$ such that $d_\tau (b)=1$, where $d_\tau\colon (A\otimes \mathcal{K})_+\to [0,\infty]$ is the map defined as $d_\tau (a)=\lim_n\tau (a^{1/n})$.
\end{pgr}

Under the presence of certain finite dimensionality assumptions, nowhere scatteredness is enough to deduce pureness (and thus strict comparison) of the algebra. The crucial relation when the \ca{} has finite decomposition rank is the following proposition.

\begin{theorem}\label{prp:PureFinDR}
    Let $A$ be a nowhere scattered \ca{} of finite decomposition rank. Then, $A$ is pure. In particular, $A$ has strict comparison.
\end{theorem}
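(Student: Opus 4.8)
The goal is to show that nowhere scatteredness plus finite decomposition rank implies pureness, which by the definition in \ref{pgr:Pure} means verifying that $\Cu(A)$ is both almost divisible and almost unperforated. Almost unperforation (equivalently, strict comparison) should follow from finite decomposition rank alone — indeed, finite decomposition rank implies finite nuclear dimension, and it is known (e.g.\ through the work relating nuclear dimension, the radius of comparison, and strict comparison; cf.\ Winter's work on pure \ca{s} and the Toms--Winter circle) that a \ca{} of finite decomposition rank has strict comparison. So the real content is the almost divisibility half, and here is where nowhere scatteredness must enter.

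**Main steps.** First I would record that, by \ref{pgr:NS}, nowhere scatteredness gives weak $(2,\omega)$-divisibility of every element of $\Cu(A)$: for each $[a]$ and each $\varepsilon>0$ there are $[c_1],\dots,[c_n]$ with $2[c_l]\le [a]$ for all $l$ and $[(a-\varepsilon)_+]\le [c_1]+\dots+[c_n]$. The task is to upgrade this ``weak, width-$2$'' divisibility to genuine almost divisibility: given $n\in\N$ and $\varepsilon>0$, produce a single $[d]$ with $n[d]\le[a]$ and $[(a-\varepsilon)_+]\le (n+1)[d]$. The bridge is almost unperforation: once $\Cu(A)$ is almost unperforated, a comparison estimate of the form $m[x]\le m'[y]$ with $m'\le (1+\delta)m$ can be converted into $[x]\le[y]$-type conclusions, and more usefully, weak divisibility estimates combined with almost unperforation are known to yield almost divisibility. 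Concretely, I would argue: iterate the weak $(2,\omega)$-divisibility to get, for any prescribed $k$, elements dominated by $\tfrac1{2^k}[a]$ (in the sense $2^k[c]\le[a]$) whose sum from finitely many pieces recovers $[(a-\varepsilon)_+]$; then use almost unperforation (strict comparison) to show that among these pieces one can find $[d]$ with $n[d]\le[a]$ while $(n+1)[d]$ still dominates $[(a-\varepsilon')_+]$ for a slightly larger cutoff — the point being that strict comparison lets one trade a strict inequality of ranks for a genuine Cuntz subequivalence. This is essentially the argument that nowhere scatteredness $+$ almost unperforation $\Rightarrow$ almost divisibility, which is surely available in the literature of Thiel--Vilalta on nowhere scattered \ca{s}; I would cite it if so, or reproduce the short argument.

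**The main obstacle.** The delicate point is the passage from ``width $2$'' weak divisibility to ``width $n{+}1$, exact'' almost divisibility: weak $(2,\omega)$-divisibility controls only that each piece is at most half of $[a]$, with no lower bound, and the number $n$ of pieces is not under our control. Making $n[d]\le[a]$ hold for a single $[d]$ requires picking out one sufficiently large piece, and this extraction is exactly what almost unperforation buys us — but one has to be careful about $\varepsilon$-cutoffs, since Cuntz comparison only ever sees $[(a-\varepsilon)_+]$, and the various $\varepsilon$'s accumulated through the iteration must be controlled so that the final cutoff can still be taken arbitrarily small. Managing this bookkeeping, and invoking strict comparison in the right form (comparison of elements via $2$-quasitraces, with the ideal-generation hypothesis automatically satisfied since all pieces live in the ideal generated by $a$), is where the care lies; the structural input (finite decomposition rank $\Rightarrow$ strict comparison, nowhere scattered $\Rightarrow$ weak divisibility) is off-the-shelf.
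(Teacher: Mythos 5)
There is a genuine gap, and it sits at the very first step: your claim that almost unperforation (strict comparison) ``should follow from finite decomposition rank alone'' is false. For non-simple \ca{s}, finite decomposition rank does not imply strict comparison: $C(X)$ for a finite CW-complex $X$ has decomposition rank equal to $\dim X$, yet its radius of comparison grows roughly like $\dim(X)/2$, so $\Cu(C(X))$ fails almost unperforation already for modest-dimensional $X$. (Such algebras are of course not nowhere scattered, which is exactly why they do not contradict the theorem.) What finite nuclear dimension $m$ actually buys, by \cite{Rob11MComp}, is only the much weaker property of $m$-comparison, and upgrading $m$-comparison to almost unperforation itself requires the divisibility supplied by nowhere scatteredness. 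So the architecture of your argument --- first obtain almost unperforation for free from the dimension hypothesis, then use it as the engine to upgrade weak $(2,\omega)$-divisibility to almost divisibility --- collapses: the ``off-the-shelf'' input for the comparison half does not exist, and nowhere scatteredness must enter both halves of pureness, not just the divisibility half.

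The paper's proof avoids this by combining the two weak properties simultaneously: \cite[Theorem~3.1]{RobTik17NucDim} shows that a nowhere scattered \ca{} of nuclear dimension $m$ with no simple purely infinite ideal-quotients is $(m,N)$-pure (weak comparison together with weak divisibility), and \cite[Theorem~5.7]{AntPerThiVil24Pure} upgrades $(m,N)$-pureness to pureness. Finite decomposition rank is used twice there: to bound the nuclear dimension and to rule out simple purely infinite ideal-quotients (a hypothesis of the Robert--Tikuisis result that your proposal also does not address). Your second step --- that weak divisibility plus genuine almost unperforation yields almost divisibility --- is in the right spirit and is close to what the cited upgrading theorem does internally, but as written it rests on an unavailable premise.
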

\begin{proof}
    As shown in \cite[Theorem~3.1]{RobTik17NucDim}, a nowhere scattered \ca{} of nuclear dimension $m$ and without any simple purely infinite ideal-quotients is $(m,N)$-pure for some $N$, hence pure by \cite[Theorem~5.7]{AntPerThiVil24Pure}. Thus, a nowhere scattered \ca{} of finite decomposition rank is pure (because simple purely infinite algebras have infinite decomposition rank).
\end{proof}

As previously stated, the rest of this section is devoted to the study of nowhere scatteredness of two constructions: Tensor products and $C_0(X)$-algebras.

\begin{lemma}\label{prp:LmaElemTens}
    Let $A,B$ be \ca{s} with $A$ nowhere scattered and let $a\in A$ and $b\in B$ be positive elements. Then, $[a\otimes b]$ is weakly $(2,\omega )$-divisible in $\Cu (A\otimes B)$.
\end{lemma}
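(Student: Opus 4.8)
The plan is to reduce the weak $(2,\omega)$-divisibility of $[a\otimes b]$ to that of $[a]$, which is available because $A$ is nowhere scattered. If $b=0$ the claim is trivial (take $c_1=0$), so assume $b\neq 0$, and for a given $\varepsilon>0$ set $\delta=\varepsilon/\|b\|$. By \ref{pgr:NS} (that is, \cite[Theorem~8.9]{ThiVil24NSCa}), the element $[a]\in\Cu(A)$ is weakly $(2,\omega)$-divisible, so there exist $d_1,\ldots,d_n\in A_+$ with $2[d_l]\leq[a]$ for every $l$ and $[(a-\delta)_+]\leq[d_1]+\cdots+[d_n]$. I will show that $c_l:=d_l\otimes b$ does the job.

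First I would record that $[x]\mapsto[x\otimes b]$ defines an order- and addition-preserving map $\varphi\colon\Cu(A)\to\Cu(A\otimes B)$: additivity follows from $(x_1\oplus x_2)\otimes b\cong(x_1\otimes b)\oplus(x_2\otimes b)$ in $M_2(A\otimes B)$, and if $x\precsim x'$ is witnessed by $r_k x' r_k^*\to x$ then, writing $b=\lim_\mu f_\mu b f_\mu$ for an approximate unit $(f_\mu)$ of $B$, the elements $r_k\otimes f_\mu\in A\otimes B$ witness $x\otimes b\precsim x'\otimes b$. Applying $\varphi$ to $2[d_l]\leq[a]$ gives $2[c_l]\leq[a\otimes b]$, and applying it to $[(a-\delta)_+]\leq[d_1]+\cdots+[d_n]$ gives $[(a-\delta)_+\otimes b]\leq[c_1]+\cdots+[c_n]$.

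It remains to establish the comparison $[(a\otimes b-\varepsilon)_+]\leq[(a-\delta)_+\otimes b]$; this is the crux of the argument and the point of the choice $\delta=\varepsilon/\|b\|$. I would work inside the commutative C*-subalgebra $C^*(a)\otimes C^*(b)\cong C_0(\sigma\times\tau)$ of $A\otimes B$, where $\sigma=\spec(a)\setminus\{0\}\subseteq(0,\|a\|]$ and $\tau=\spec(b)\setminus\{0\}\subseteq(0,\|b\|]$. Under this identification $a\otimes b$ corresponds to the function $(s,t)\mapsto st$, and $(a-\delta)_+\otimes b$ to $(s,t)\mapsto(s-\delta)_+t$. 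Whenever $st\geq\varepsilon+\varepsilon'$ with $\varepsilon'>0$ one has $s\geq st/\|b\|\geq(\varepsilon+\varepsilon')/\|b\|>\delta$ and $t\geq st/\|a\|>0$, so $(s-\delta)_+t>0$ on the closed set $\{st\geq\varepsilon+\varepsilon'\}$; since this set is a closed subset of the compact box $[(\varepsilon+\varepsilon')/\|b\|,\|a\|]\times[(\varepsilon+\varepsilon')/\|a\|,\|b\|]$ it is compact, so $(s-\delta)_+t$ is bounded below on it. As $\supp((st-\varepsilon-\varepsilon')_+)\subseteq\{st\geq\varepsilon+\varepsilon'\}\subseteq\{(s-\delta)_+t>0\}$ for every $\varepsilon'>0$, this yields $(st-\varepsilon)_+\precsim(s-\delta)_+t$ in $C_0(\sigma\times\tau)$, hence $(a\otimes b-\varepsilon)_+\precsim(a-\delta)_+\otimes b$ in $A\otimes B$. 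Chaining the inequalities gives $[(a\otimes b-\varepsilon)_+]\leq[c_1]+\cdots+[c_n]$ together with $2[c_l]\leq[a\otimes b]$, and since $\varepsilon>0$ was arbitrary, $[a\otimes b]$ is weakly $(2,\omega)$-divisible.

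The main obstacle is precisely this last comparison: one must control how the cut-off point degrades under tensoring, and the clean way to see $(a\otimes b-\varepsilon)_+\precsim(a-\varepsilon/\|b\|)_+\otimes b$ is to pass to the commutative C*-algebra generated by the commuting positive elements $a\otimes 1$ and $1\otimes b$, where Cuntz comparison becomes an elementary statement about supports of continuous functions; everything else is routine manipulation with the basic properties of $\Cu$ recalled in \ref{pgr:NS}.
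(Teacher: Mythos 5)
Your proof is correct and follows essentially the same route as the paper: take divisibility witnesses $d_1,\ldots,d_n$ for $[a]$ in $\Cu(A)$, set $c_l=d_l\otimes b$, and transport the Cuntz comparisons to $A\otimes B$ by tensoring the witnessing sequences with an approximate unit for $b$. The only difference is that you spell out the comparison $[(a\otimes b-\varepsilon)_+]\leq[(a-\varepsilon/\|b\|)_+\otimes b]$ via an explicit support argument in the commutative subalgebra $C^*(a)\otimes C^*(b)$, whereas the paper dispatches this step by noting that the elements $(a-\delta)_+\otimes b$ norm-approximate $a\otimes b$ and implicitly invoking the standard cut-down lemma.
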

\begin{proof}
    The elements of the form $(a-\varepsilon)_+\otimes b$ approximate $a\otimes b$, so it is enough to show that for every $\varepsilon>0$ there exist $c_1,\ldots ,c_n\in (A\otimes B)_+$ such that $2[c_l]\leq [a\otimes b]$ and $[(a-\varepsilon)_+\otimes b]\leq [c_1]+\ldots +[c_n]$ in $\Cu (A\otimes B)$.

    Fix $\varepsilon>0$. Using that $A$ is nowhere scattered, find $d_1,\ldots ,d_n\in A_+$ such that $2[d_l]\leq [a]$ and $[(a-\varepsilon)_+]\leq [d_1]+\ldots +[d_n]$. Let $(x_k^{(l)})_k\subseteq M_{2,1}(A)$ and $(y_k)_k\subseteq M_{1,n}(A)$ be sequences such that $x_k^{(l)} a (x_k^{(l)})^*\to d_l^{\oplus 2}$ and $y_k (d_1\oplus\ldots\oplus d_n) y_k^*\to (a-\varepsilon)_+$. Set $c_l=d_l\otimes b$.
    
    Let $(e_k)_k\subseteq B$ be positive contractions such that $e_k b e_k\to b$. Consider the sequences of matrices $(r_k^{(l)}),(s_k)$ defined as $r_k (i,j)=x_k^{(l)} (i,j)\otimes e_k$ and $s_k (i,j)=y_k (i,j)\otimes e_k$. By construction, these elements satisfy $r_k^{(l)} (a\otimes b)(r_k^{(l)})^*\to c_l^{\oplus 2}$ and $s_k (c_1\oplus\ldots\oplus c_n )s_k^*\to (a-\varepsilon)_+\otimes b$. This shows that $2[c_l]\leq [a\otimes b]$ and $[(a-\varepsilon)_+\otimes b]\leq [c_1]+\ldots +[c_n]$ in $\Cu (A\otimes B)$, as desired.
\end{proof}

As defined in \cite[Definition~4.2]{KirSie16arX}, recall that a family $\mathcal{F}\subseteq A_+$ is said to be \emph{filling} if, for every hereditary sub-\ca{} $D\subseteq A$ and any ideal $I$ of $A$ such that $D\not\subseteq I$, there exists $f\in\mathcal{F}$ and $z\in A$ with $zz^*\in D$ and $z^*z=f\not\in I$.

Recall that a pair of \ca{s} $A,B$ have \emph{property (F)} \cite{Tom67PropF} if the states $\varphi\otimes\phi$ with $\varphi$ and $\phi$ pure states of $A$ and $B$ respectively separate ideals in the minimal tensor product $A\otimes B$.

\begin{proposition}\label{prp:TensorNSca}
    Let $A,B$ be a pair of \ca{s} with property (F). Assume that $A$ is nowhere scattered. Then, $A\otimes B$ is nowhere scattered.
\end{proposition}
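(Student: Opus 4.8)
The plan is to reduce nowhere scatteredness of $A\otimes B$ to the weak $(2,\omega)$-divisibility of a generating family of positive elements, and then invoke \cref{pgr:NS}'s reformulation together with \cref{prp:LmaElemTens}. The key structural input is that property (F) furnishes a \emph{filling family} in $A\otimes B$: namely, the elementary tensors $a\otimes b$ with $a\in A_+$, $b\in B_+$ should, using the separation of ideals by product pure states, be shown to form a filling family in the sense recalled just before the proposition. Indeed, given a hereditary subalgebra $D\subseteq A\otimes B$ and an ideal $I$ with $D\not\subseteq I$, one picks a product pure state $\varphi\otimes\phi$ that does not vanish on $I$ (or rather separates $I$ from a larger ideal), and uses it to produce a positive element in $D$ that is Cuntz-subequivalent to a suitable elementary tensor lying outside $I$; this is where the concrete combinatorics with pure states and hereditary subalgebras happens.

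The second ingredient is a general principle, which I expect is available from \cite{ThiVil24NSCa} or \cite{KirSie16arX}: if a \ca{} $C$ admits a filling family $\mathcal{F}$ all of whose elements have weakly $(2,\omega)$-divisible classes in $\Cu(C)$, then $C$ is nowhere scattered. The heuristic is that weak $(2,\omega)$-divisibility of $[a\otimes b]$ for enough elements propagates to every $[c]$ with $c\in (A\otimes B)_+$ via the filling property — a hereditary subalgebra generated by $c$ that survives into a quotient must, by fillingness, absorb (Cuntz-below) one of the $f\in\mathcal{F}$, and divisibility is inherited downward and is stable under the passage to ideal-quotients. So the logical skeleton is: (1) elementary tensors are filling in $A\otimes B$ [uses property (F)]; (2) each $[a\otimes b]$ is weakly $(2,\omega)$-divisible [this is exactly \cref{prp:LmaElemTens}, using $A$ nowhere scattered]; (3) a filling family of weakly $(2,\omega)$-divisible elements forces nowhere scatteredness. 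Steps (2) is already done; step (3) I would quote; so the real content is step (1).

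The main obstacle will be step (1): verifying that the elementary positive tensors form a filling family. The definition of filling requires, for a hereditary $D$ and an ideal $I$ with $D\not\subseteq I$, an element $z\in A\otimes B$ with $zz^*\in D$ and $z^*z=a\otimes b\notin I$. Property (F) tells us that ideals are separated by product pure states $\varphi\otimes\phi$, so there is such a state that is nonzero on the ideal generated by $D$ but — and here is the delicate part — one needs to convert this spectral/state-theoretic information into an honest element $z$ whose range projection sits inside $D$ and whose source is an elementary tensor avoiding $I$. The natural approach is: pick $\varphi\otimes\phi$ with $(\varphi\otimes\phi)$ faithful enough on $D$ but killing $I$; use the GNS construction or a quasi-central approximate unit argument to extract a positive element $d\in D$ with $(\varphi\otimes\phi)(d)$ close to $\|d\|$; then approximate $d$ (in the Cuntz sense, up to cutting down) by an element dominated by $a\otimes b$ where $a$, $b$ are obtained by separately cutting down with the local states $\varphi$ and $\phi$. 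I would expect to need the characterization of $I$ as $\bigcap\{\ker(\psi\otimes\chi)\}$ over product pure states vanishing on $I$, and a careful choice of $\varepsilon$'s to keep $a\otimes b$ outside $I$ while keeping the range projection of the corresponding $z$ inside $D$. This cutting-down-separately step — decoupling the $A$-part and $B$-part of a positive element of $A\otimes B$ well enough to land on an elementary tensor — is the technical heart, and it is precisely what property (F) is designed to enable.
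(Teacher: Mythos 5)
Your steps (1) and (2) line up with the paper: the paper also reduces everything to the filling family $\mathcal{F}=\{a\otimes b\mid a\in A_+,\ b\in B_+\}$ and to \cref{prp:LmaElemTens}. Note, however, that the paper does not reprove fillingness of $\mathcal{F}$ from property (F) as you propose to do; it simply quotes \cite[Lemma~6.2]{KirSie16arX} (see also \cite[Lemma~2.15]{BlaKir04PIHaus}), observing that the exactness hypothesis there can be replaced by property (F). So the part you identify as ``the technical heart'' is in fact the part that is imported wholesale, and your sketch of it (GNS constructions, quasi-central approximate units) need not be carried out.

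The genuine gap is in your step (3). The ``general principle'' you propose to quote --- that a filling family consisting of weakly $(2,\omega)$-divisible elements forces nowhere scatteredness --- is not available as a citable statement in \cite{ThiVil24NSCa} or \cite{KirSie16arX}; it is precisely the content that the proof must supply. Moreover, the heuristic you give for it does not work: the filling property produces $z$ with $zz^*\in D=\overline{c(A\otimes B)c}$ and $z^*z=a\otimes b\notin I$, hence $[a\otimes b]\leq [c]$, so the divisible element sits \emph{below} $[c]$ in $\Cu(A\otimes B)$, and weak $(2,\omega)$-divisibility does not pass upward from $[a\otimes b]$ to $[c]$. The paper closes this gap with a functional argument: if $A\otimes B$ were not nowhere scattered, then by \cite[Proposition~13.6]{GarPer24arX} there is a functional $\lambda$ on $\Cu(A\otimes B)$ with range exactly $\N\cup\{\infty\}$; taking $c$ with $\lambda([c])=1$ and applying fillingness to $D=\overline{c(A\otimes B)c}$ and the ideal $I=\{x\mid\lambda(x)=0\}$ yields $a\otimes b\precsim c$ with $\lambda([a\otimes b])=1$, and then the elements $[c_1],\dots,[c_n]$ from \cref{prp:LmaElemTens} satisfy $2\lambda([c_l])\leq 1$ while some $\lambda([c_l])\neq 0$, forcing a non-integer value of $\lambda$ --- a contradiction. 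Without this (or an equivalent) mechanism, your outline does not yet constitute a proof.
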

\begin{proof}
    It follows from \cite[Lemma~6.2]{KirSie16arX} that the family $\mathcal{F}=\{ a\otimes b\mid a\in A_+,\, b\in B_+ \}$ is filling in the (minimal) tensor product\footnote{More concretely, although \cite[Lemma~6.2]{KirSie16arX} is stated under the assumption of exactness, an inspection of its proof (and the references therein) shows that the same results hold whenever $A,B$ satisfy property (F).}; see also \cite[Lemma~2.15]{BlaKir04PIHaus}.

    Now, assume for the sake of contradiction that $A\otimes B$ is not nowhere scattered. By \cite[Proposition~13.6]{GarPer24arX}, there exists a functional $\lambda$ in $\Cu (A\otimes B)$ such that $\lambda (\Cu (A\otimes B))= \mathbb{N}\cup\{\infty \}$. Let $I$ be the ideal in $A\otimes B$ corresponding to the $\Cu$-ideal $\{ x\mid \lambda (x)=0 \}$.

    Let $c\in (A\otimes B)_+$ be such that $\lambda ([c])=1$ and set $D=\overline{c(A\otimes B)c}$. Since $\mathcal{F}$ is filling, we find $a\in A_+$, $b\in B_+$ and $z\in A\otimes B$ such that $zz^*\in D$ and $z^*z=a\otimes b\not\in I$. In particular, we have $a\otimes b\precsim c$ and $\lambda ([a\otimes b])\neq 0$ and, consequently, $\lambda ([a\otimes b])=1$.

    Let $\varepsilon>0$ be such that $\lambda ([(a-\varepsilon)_+\otimes (b-\varepsilon)_+])=1$. By \cref{prp:LmaElemTens}, there exist $[c_1],\ldots ,[c_n]\in \Cu (A\otimes B)$ such that $2[c_l]\leq [a\otimes b]$ and $[(a-\varepsilon)_+\otimes (b-\varepsilon)_+]\leq [c_1]+\ldots +[c_n]$. The second inequality gives the existence of some $l\leq n$ such that $\lambda ([c_l])\neq 0$, while the first inequality implies $\lambda ([c_t])<1$ for each $t$. Thus, $0<\lambda ([c_l])<1$, a contradiction.
\end{proof}


In the study of twisted group \ca{s} we will encounter $C_0(X)$-algebras (\cite[Definition~2.1]{BlaKir04GGPBundles}); see e.g. \cref{prop:packer-raeburn}. With this in mind, we add \cref{prp:BundlesNSCa} here for future use.

\begin{proposition}\label{prp:BundlesNSCa}
Let $A$ be a $C_0(X)$-algebra. Then $A$ is nowhere scattered if and only if $A_x$ is nowhere scattered for every $x \in X$.
\end{proposition}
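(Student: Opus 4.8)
The plan is to use the characterization of nowhere scatteredness via weak $(2,\omega)$-divisibility from \ref{pgr:NS} and argue in both directions using the fiber maps $\pi_x\colon A\to A_x$.

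For the forward direction, I would note that each fiber $A_x$ is a quotient of $A$ (namely $A/(C_0(X\setminus\{x\})\cdot A)$), and nowhere scatteredness passes to quotients since it passes to ideal-quotients; this is already recorded in \ref{pgr:NS} (it is preserved under the relevant operations, per \cite[Section~4]{ThiVil24NSCa}). So this direction is essentially immediate.

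For the converse, suppose every $A_x$ is nowhere scattered. Given $a\in A_+$ and $\varepsilon>0$, I want to produce $c_1,\dots,c_n\in A_+$ with $2[c_l]\leq[a]$ and $[(a-\varepsilon)_+]\leq[c_1]+\dots+[c_n]$. The idea is to do this locally over $X$ and patch. Fix $x\in X$: since $A_x$ is nowhere scattered, $[\pi_x(a)]$ is weakly $(2,\omega)$-divisible, so there are $d_1^{(x)},\dots,d_{n_x}^{(x)}\in (A_x)_+$ with $2[d_l^{(x)}]\leq[\pi_x(a)]$ and $[(\pi_x(a)-\varepsilon/2)_+]\leq\sum_l[d_l^{(x)}]$. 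Lift each $d_l^{(x)}$ to $\tilde d_l^{(x)}\in A_+$ (fiber maps of $C_0(X)$-algebras are surjective). The Cuntz subequivalences $2[\pi_x(\tilde d_l^{(x)})]\leq[\pi_x(a)]$ and the comparison with $(\pi_x(a)-\varepsilon/2)_+$ are witnessed by finitely many approximate elements in matrix algebras over $A_x$; lifting those witnesses to $A$ and using semicontinuity of the norm over $X$, these relations persist on a neighborhood $U_x$ of $x$ — more precisely, after the usual $\varepsilon$-cutdown trick one gets $2[\chi\cdot\tilde d_l^{(x)}]\leq[a]$ globally for a suitable $\chi\in C_0(X)_+$ supported near $x$, and $[(a-\varepsilon)_+\cdot\chi']\leq\sum_l[\tilde d_l^{(x)}]$ on $U_x$ for an appropriate $\chi'$. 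Covering $\supp((a-\varepsilon)_+)$ — or rather, using that the open sets $\{y : [(\pi_y(a)-\varepsilon)_+]\text{ is dominated by the local data}\}$ cover a set on which $(a-\varepsilon)_+$ is supported — extract a finite subcover $U_{x_1},\dots,U_{x_k}$ with a subordinate partition of unity $\{f_i\}\subseteq C_0(X)_+$, and take the combined family $\{f_i\cdot\tilde d_l^{(x_i)}\}$ as the desired $c$'s. Each still satisfies $2[c]\leq[a]$, and $[(a-\varepsilon)_+]\leq\sum_i[f_i\cdot(a-\varepsilon)_+]\leq\sum_i\sum_l[\tilde d_l^{(x_i)}\cdot(\text{something})]$.

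The main obstacle is the patching: turning pointwise weak $(2,\omega)$-divisibility into a genuine global statement. The subtlety is that the number $n_x$ of divisors and the divisors themselves vary with $x$, so one cannot simply glue a single family; one must use a partition of unity and combine all the local families into one finite list, being careful that multiplying a witness $d_l^{(x)}$ by a function $f\in C_0(X)_+$ does not destroy $2[f\cdot\tilde d_l^{(x)}]\leq[a]$ (it does not, since $f\cdot\tilde d_l^{(x)}\precsim\tilde d_l^{(x)}$) and that $\sum_i f_i=1$ on a large enough set so that $\sum_i[f_i\cdot(a-\varepsilon)_+]$ actually dominates $[(a-\varepsilon)_+]$. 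A clean way to organize this is to reduce, via a standard compactness argument on the compact set $\{x\in X : \|\pi_x(a)\|\geq\varepsilon\}$ (or its closure), to finitely many points and then assemble. Alternatively, one can cite a local-global principle for $C_0(X)$-algebras in the literature if available; but the direct argument via partitions of unity and functional calculus is self-contained and, modulo routine $\varepsilon$-bookkeeping, straightforward.
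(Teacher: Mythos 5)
Your forward direction matches the paper's (fibers are quotients, and nowhere scatteredness passes to quotients). For the converse, however, your route is much heavier than what is needed, and the sketch as written contains a genuine gap in the patching step. The problem is precisely the part you dismiss as ``routine $\varepsilon$-bookkeeping.'' After multiplying by a partition-of-unity function $f_i$ supported in $U_{x_i}$, the ``usual $\varepsilon$-cutdown trick'' does \emph{not} give the Cuntz comparisons you need: on the (nonempty) set where $f_i$ is small but strictly positive, an element like $\bigl(f_i\cdot(a-\varepsilon/2)_+-\delta\bigr)_+$ vanishes while $f_i\cdot(a-\varepsilon)_+$ does not, so $f_i(a-\varepsilon)_+\not\precsim\bigl(f_i(a-\varepsilon/2)_+-\delta\bigr)_+$ in general (check this already in a commutative algebra by comparing open supports). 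The same issue undermines $2[f_i\tilde d_l^{(x_i)}]\leq[a]$: the global lift $\tilde d_l^{(x_i)}$ bears no relation to $a$ outside $U_{x_i}$, so the only globally valid relation is $2[(\chi\tilde d_l^{(x_i)}-\delta)_+]\leq[a]$ for a cut-down element, and $f_i\tilde d_l^{(x_i)}$ is not subequivalent to that cut-down where $f_i$ is small. Your appeal to $f\cdot\tilde d_l^{(x)}\precsim\tilde d_l^{(x)}$ compares against the wrong element. The argument can be repaired by the nested two-function device --- choose $f_i\leq e_i$ with $e_i\equiv 1$ on $\supp f_i$ and $\supp e_i\subseteq U_{x_i}$, so that for instance $f_i(a-\varepsilon)_+\leq\bigl(e_i(a-\varepsilon/2)_+-\varepsilon/4\bigr)_+$ --- and by using $(e_i\tilde d_l^{(x_i)}-\delta_i)_+$ as the final divisors with $\delta_i$ chosen after the local witnesses; but that is a specific missing idea, not bookkeeping, and the order of quantifiers matters.

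You should also be aware that the paper avoids all of this. Its converse is a three-line argument with the ideal-quotient definition: if $A$ is not nowhere scattered, pick ideals $J\subseteq I\subseteq A$ with $I/J$ nonzero and elementary; each $I_x/J_x$ is a quotient of $I/J$, hence elementary; and since an element of a $C_0(X)$-algebra is zero if and only if all its fiber images are zero, some $I_x/J_x$ is nonzero, contradicting nowhere scatteredness of $A_x$. No Cuntz semigroup, no lifting, no partitions of unity. If you want to keep your local-to-global approach (which does have independent interest, e.g.\ for divisibility properties that are not characterized by ideal-quotients), you must supply the nested-support estimates explicitly.
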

\begin{proof}
Since every $A_x$ is a quotient of $A$ it follows that $A_x$ is nowhere scattered for every $x$ whenever $A$ is nowhere scattered.

Conversely, assume that $A_x$ is nowhere scattered for each $x$. If $A$ is not nowhere scattered, then there exist ideals $J \subseteq I \subseteq A$ such that $I/J$ is nonzero and elementary. For every $x \in X$ the quotient $I_x / J_x$ is a quotient of $I/J$, hence it is elementary itself. Since $I/J$ is nonzero it follows that for some $x \in X$ the quotient $I_x/J_x$ must be nonzero. Hence $A_x$ is not nowhere scattered, a contradiction.
\end{proof}

\begin{remark}\label{rmk:GGP}
    A \ca{} has the \emph{Global Glimm Property} if for every $a\in A_+$ and every $\varepsilon>0$ there exists a square-zero element $r\in \overline{aAa}$ such that $(a-\varepsilon )_+$ is in the ideal generated by $r$. This condition implies nowhere scatteredness, and it is not known if both properties are equivalent; see \cite{ThiVil23Glimm} for an overview.

    Blanchard and Kirchberg showed in \cite{BlaKir04GGPBundles} that a $\mathrm{C}^*$-bundle over a finite dimensional locally compact Hausdorff space has the Global Glimm Property whenever each fiber is simple and non-elementary. In light of \cref{prp:BundlesNSCa}, we ask: Does a $C_0 (X)$-algebra have the Global Glimm Property whenever each of its fibers has it?
\end{remark}

\section{\texorpdfstring{Twisted group \ca{s}}{Twisted group C*-algebras}}\label{sec:GenResGroups}

We collect here general results on (twisted) group \ca{s} related to nowhere scatteredness and pureness. We first focus on group \ca{s} (\cref{subsec:GroupCa}) and then move to twisted group \ca{s} (\cref{subsec:TwisGroupCa}); see, for example, \cref{prp:GNonAme} and \cref{lma:NSCAimpInfInd}. These results will be used to obtain our characterization of $\mathcal{Z}$-stability in \cref{sec:TwistedCalgs}.

Let $G$ be a discrete group. Denote by $\mathrm{FC}(G)$ the \emph{FC-center} of $G$, that is, the subgroup of elements with finite conjugacy class. A \emph{$2$-cocycle} on $G$ is a map $\sigma\colon G\times G\to\mathbb{T}$ such that $\sigma (g,e)=\sigma (e,g)=1$ and 
\[
    \sigma (g,h)\sigma (gh,k)=\sigma (g,hk) \sigma (h,k)
\]
for all $g,h,k$ in $G$. To a 2-cocycle $\sigma$ we associate the \emph{$\sigma$-projective left regular representation} of $G$, which is the map $\lambda_\sigma \colon G \to \mathcal{U}(\ell^2(G))$ given on the canonical basis $\{ \delta_h : h \in G \}$ of $\ell^2(G)$ by
\[
    \lambda_\sigma (g) \delta_h = \sigma(g,h)\delta_{gh}, \quad g,h \in G.
\]
An element $g \in G$ is called \emph{$\sigma$-regular} if $\sigma(g,h) = \sigma(h,g)$ whenever $gh = hg$. By the \emph{$\sigma$-twisted center} of $G$ we mean the set $\mathrm{Z}(G,\sigma)$ of all $\sigma$-regular elements in the center of $G$. This is a subgroup of $G$, cf.\ \cite[Lemma 7.1]{Kl65}. We also denote by $\mathrm{FC}(G,\sigma)$ the set of all $\sigma$-regular elements in $\mathrm{FC}(G)$ and say that $(G,\sigma)$ satisfies \emph{Kleppner's condition} if $\mathrm{FC}(G,\sigma) = \{ e \}$.

Throughout the text, we will denote by $\widetilde{\sigma}$ the \emph{antisymmetrized form} of $\sigma$, defined by
\[
    \widetilde{\sigma}(g,h)=\sigma(g,h)\sigma(h,h^{-1}gh)^{-1}.
\]

\subsection{\texorpdfstring{Group \ca{s}}{Group C*-algebras}}\label{subsec:GroupCa} In this short subsection we state permanence properties of groups whose reduced group \ca{} is nowhere scattered, and we also note that all such groups must be non-amenable; see \cref{prp:GNonAme} and \cref{qst:NonAmImpNSCA}.

We begin with the definition of reduced twisted group \ca{s}, which gives the definition of a reduced group \ca{} when $\sigma$ is trivial.

\begin{definition}\label{dfn:TwistCAlg}
    Let $\sigma$ be a $2$-cocycle on a discrete group $G$.
    The \emph{reduced twisted group \ca{}} of $(G,\sigma)$, denoted by $C^*_r (G,\sigma)$, is the $\mathrm{C}^*$-subalgebra of $\mathcal{B}(\ell^2(G))$ generated by $\lambda_\sigma (G)$.
    
    The \emph{reduced group \ca{}} of a discrete group $G$, denoted by $C^*_r (G)$, is the reduced twisted group \ca{} algebra of $(G,1)$.
\end{definition}

\begin{example}$ $
    \begin{enumerate}
        \item Any nontrivial $\mathrm{C}^*$-simple group has a nowhere scattered reduced group \ca{}, since any simple non-elementary \ca{} is nowhere scattered \cite[Example~3.2]{ThiVil24NSCa}.
        \item Let $G$ be $\mathrm{C}^*$-simple and $H$ be any group. Assume that either $G$ or $H$ is exact. Then, $C^*_r (G\times H)$ is nowhere scattered (\cref{prp:PermPropCG}). In particular, any group of the form $\mathbb{F}_n\times H$ gives rise to a nowhere scattered algebra by \cite{Pow75Simp}.
    \end{enumerate}
\end{example}

The following proposition subsumes the permanence properties of groups whose reduced group \ca{} is nowhere scattered.

\begin{proposition}\label{prp:PermPropCG}
    Let $G,H$ be discrete groups. Then,
    \begin{itemize}
        \item[(i)] if $C^*_r(H)$ is nowhere scattered and either $G$ or $H$ is exact, $C^*_r(G\times H)$ is nowhere scattered;
        \item[(ii)] if $G$ is the inductive limit of a directed family of groups $\{ G_i\}_i$ such that each $C^*_r(G_i)$ is nowhere scattered, $C^*_r(G)$ is also nowhere scattered.
    \end{itemize}
\end{proposition}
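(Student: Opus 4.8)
The plan is to reduce both statements to known permanence properties of nowhere scatteredness together with a little tensor-product theory. For part (i), the starting observation is that $C^*_r(G \times H) \cong C^*_r(G) \otimes C^*_r(H)$ as a minimal tensor product, since the reduced group \ca{} construction is compatible with direct products of discrete groups. The aim is then to invoke \cref{prp:TensorNSca} with the roles chosen so that the nowhere scattered factor is $C^*_r(H)$ and the other factor is $C^*_r(G)$; what remains is to verify that the pair $\big(C^*_r(H), C^*_r(G)\big)$ has property (F). Here the exactness hypothesis enters: if either $G$ or $H$ is exact, then one of $C^*_r(G)$, $C^*_r(H)$ is an exact \ca{}, and a pair of \ca{s} one of which is exact automatically has property (F) (this is the classical fact that exact \ca{s} are nuclearly embeddable / have the slice map property, so every ideal of the minimal tensor product is induced, and pure product states separate ideals). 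I would cite Tom67PropF together with a standard reference on exactness for this last point. Note \cref{prp:TensorNSca} is not symmetric in its hypotheses — it needs the nowhere scattered algebra to be the "first" factor — so I would simply state the isomorphism $C^*_r(G\times H) \cong C^*_r(H) \otimes C^*_r(G)$ in the order convenient for applying the proposition.

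For part (ii), the plan is purely soft: if $G = \varinjlim G_i$ is an inductive limit of the directed family $\{G_i\}_i$, then at the level of reduced group \ca{s} one gets a corresponding inductive system $\{C^*_r(G_i)\}_i$ with $C^*_r(G) = \varinjlim C^*_r(G_i)$. This is a standard fact about reduced group \ca{s} of inductive limits of discrete groups (the connecting maps $C^*_r(G_i) \to C^*_r(G_j)$ are the injective $^*$-homomorphisms induced by the inclusions $G_i \hookrightarrow G_j$, and their union is dense in $C^*_r(G)$, using that every finitely supported function on $G$ is supported on the image of some $G_i$). Since nowhere scatteredness is preserved under inductive limits — recorded in \ref{pgr:NS}, citing \cite[Section~4]{ThiVil24NSCa} — the conclusion is immediate once the inductive-limit decomposition of the \ca{} is in place.

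The main obstacle, such as it is, is the verification of property (F) in part (i): one must be slightly careful that "either $G$ or $H$ exact" is exactly what is needed to guarantee that the pure product states $\varphi \otimes \phi$ separate the ideals of $C^*_r(G)\otimes C^*_r(H)$, and to phrase this correctly one should recall that exactness of a \ca{} $A$ implies that for any $B$ every ideal of $A \otimes B$ is of the form $\sum I_k \otimes J_k$ (equivalently the minimal tensor product has the ideal property), from which property (F) follows by a standard Hahn–Banach slicing argument. Everything else — the two tensor and inductive-limit identifications for reduced group \ca{s}, and the application of \cref{prp:TensorNSca} and of the inductive-limit stability of nowhere scatteredness — is routine and can be dispatched with citations.
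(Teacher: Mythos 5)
Your proposal is correct and follows essentially the same route as the paper: for (i) the identification $C^*_r(G\times H)\cong C^*_r(G)\otimes C^*_r(H)$ plus \cref{prp:TensorNSca}, with property (F) supplied by exactness of one factor (the paper simply cites \cite[Proposition~2.17]{BlaKir04PIHaus} where you sketch the slice-map argument), and for (ii) the inductive-limit permanence of nowhere scatteredness from \cite{ThiVil24NSCa}. Your remark about ordering the factors so that the nowhere scattered algebra sits in the first slot of \cref{prp:TensorNSca} is a correct, if minor, point of care.
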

\begin{proof}
    For (i), it is well-known that $C^*_r(G\times H)$ can be written as the minimal tensor product $C^*_r(G)\otimes C^*_r(H)$. We know from \cite[Proposition~2.17]{BlaKir04PIHaus} that the pair $C^*_r(G), C^*_r(H)$ has property (F) whenever one of the groups is exact. Thus, the result follows from \cref{prp:TensorNSca}.

    For (ii), we know from \cite{Eym64Fourier} that there exist *-homomorphisms $\rho_i\colon C^*_r(G_i)\to C^*_r(G)$ such that $C^*_r(G)=\overline{\cup_i \rho_i(C^*_r(G_i))}$. The result now follows from \cite[Proposition~4.5]{ThiVil24NSCa}.
\end{proof}

\begin{proposition}\label{prp:GNonAme}
Let $G$ be a discrete group. Then, $G$ is non-amenable whenever $C_r^*(G )$ is nowhere scattered.
\end{proposition}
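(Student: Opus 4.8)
The plan is to prove the contrapositive: if $G$ is amenable, then $C^*_r(G)$ admits a nonzero elementary quotient, and hence is not nowhere scattered. The key point is that amenability of $G$ is equivalent, by Hulanicki's theorem, to the coincidence $C^*_r(G) = C^*(G)$ of the reduced and full group \ca{s}; equivalently, the trivial character $\varepsilon\colon G \to \mathbb{T}$, $g \mapsto 1$, extends from a unital $*$-homomorphism on $C^*(G)$ to a (surjective) unital $*$-homomorphism $\pi\colon C^*_r(G) \to \mathbb{C}$ with $\pi(\lambda(g)) = 1$ for all $g \in G$.

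First I would recall the standard fact that the one-dimensional trivial representation of $G$ integrates to a unital $*$-homomorphism $C^*(G) \to \mathbb{C}$, and that for amenable $G$ this factors through $C^*_r(G)$ by the above. This produces a surjection $\pi\colon C^*_r(G) \to \mathbb{C}$, whose kernel $I = \ker \pi$ is an ideal of $C^*_r(G)$ with $C^*_r(G)/I \cong \mathbb{C}$.

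Next I would observe that $\mathbb{C} \cong \mathbb{K}(\mathcal{H})$ for a one-dimensional Hilbert space $\mathcal{H}$, so $\mathbb{C}$ is a nonzero elementary \ca{}. Taking $J = \ker\pi$ and the whole algebra as the ambient ideal in the sense of \ref{pgr:NS}, the quotient $C^*_r(G)/\ker\pi \cong \mathbb{C}$ is a nonzero elementary ideal-quotient of $C^*_r(G)$. Hence $C^*_r(G)$ is not nowhere scattered, which is the contrapositive of the claim.

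I expect essentially no serious obstacle here; the only step requiring care is the appeal to Hulanicki's theorem (equivalently, to $C^*_r(G) = C^*(G)$ for amenable $G$), which is exactly what is needed to ensure that the trivial character descends to the \emph{reduced} group \ca{} and not merely to the full one. One could also phrase this via weak containment $1_G \prec \lambda_G$, but in either formulation the conclusion is immediate.
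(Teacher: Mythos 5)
Your proposal is correct and follows essentially the same route as the paper: both arguments reduce to the fact that amenability forces the trivial character to descend to $C^*_r(G)$ (weak containment $1_G \prec \lambda_G$, i.e.\ Hulanicki), yielding a one-dimensional representation incompatible with nowhere scatteredness. The only cosmetic difference is that you verify the failure of nowhere scatteredness directly from the ideal-quotient definition (via the elementary quotient $C^*_r(G)/\ker\pi \cong \mathbb{C}$), whereas the paper invokes the characterization from \cite[Theorem~3.1]{ThiVil24NSCa} in terms of finite-dimensional irreducible representations of hereditary subalgebras.
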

\begin{proof}
If $C_r^*(G )$ is nowhere scattered, it follows from \cite[Theorem~3.1]{ThiVil24NSCa} that no hereditary sub-\ca{} of $C_r^*(G )$ admits a finite-dimensional irreducible representation.

Thus, $G$ cannot be amenable, since otherwise $C_r^*(G )$ admits a one-dimensional representation (because the trivial representation is weakly contained in the left-regular representation).
\end{proof}

We are not aware of any example of a non-amenable group whose reduced group \ca{} is not nowhere scattered. In light of this, we ask:

\begin{question}\label{qst:NonAmImpNSCA}
When does a non-amenable discrete group have a nowhere scattered reduced group \ca{}?
\end{question}

\subsection{\texorpdfstring{Twisted group \ca{s}}{Twisted group C*-algebras}}\label{subsec:TwisGroupCa} As in the previous subsection, we begin by stating permanence properties of nowhere scatteredness. We then move to the study of $2$-cocycle decompositions (\cref{dfn:CocyDecomp}), which become a crucial tool to understand nowhere scatteredness of the algebra; see \cref{lma:NSCAimpInfInd}.

First, following the proof of \cref{prp:PermPropCG} one can show, mutatis-mutandis, (i) and (ii) below.

\begin{proposition}\label{prp:PermPropCGSigm}
    Let $G,H$ be discrete groups and let $\sigma=\sigma_G\times \sigma_H$ be a $2$-cocycle on $G\times H$ where $\sigma_G,\sigma_H$ are $2$-cocycles on $G$ and $H$ respectively. Then,
    \begin{itemize}
        \item[(i)] if $C^*_r(G,\sigma_G)$ is nowhere scattered and either $G$ or $H$ is exact, $C^*_r(G\times H,\sigma)$ is nowhere scattered;
        \item[(ii)] if $G$ is the inductive limit of a directed family of groups $\{ G_i\}_i$ such that $C^*_r(G_i,\sigma_G)$ is nowhere scattered, $C^*_r(G,\sigma_G)$ is also nowhere scattered.
    \end{itemize}
\end{proposition}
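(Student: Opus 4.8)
The plan is to imitate the proof of \cref{prp:PermPropCG} almost verbatim, replacing the untwisted group \ca{s} by their twisted counterparts. For part (i), the key observation is the standard identification $C^*_r(G\times H,\sigma_G\times\sigma_H)\cong C^*_r(G,\sigma_G)\otimes C^*_r(H,\sigma_H)$ of the twisted group \ca{} of a product with product cocycle as the minimal tensor product of the twisted group \ca{s} of the factors; this follows by checking that $\lambda_{\sigma_G\times\sigma_H}(g,h)=\lambda_{\sigma_G}(g)\otimes\lambda_{\sigma_H}(h)$ under the unitary $\ell^2(G\times H)\cong\ell^2(G)\otimes\ell^2(H)$, together with the fact that the reduced norm on the algebraic tensor product agrees with the minimal tensor norm. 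Once this identification is in place, one invokes \cite[Proposition~2.17]{BlaKir04PIHaus} to conclude that the pair $C^*_r(G,\sigma_G)$, $C^*_r(H,\sigma_H)$ has property (F) whenever $G$ or $H$ is exact (note that exactness of a group passes to its reduced twisted group \ca{} just as in the untwisted case, so the hypothesis of exactness of the group still suffices), and then \cref{prp:TensorNSca} gives nowhere scatteredness of the tensor product, hence of $C^*_r(G\times H,\sigma)$.

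For part (ii), one needs the twisted analogue of the fact that $C^*_r(G)$ is the closure of the union of the images of the $C^*_r(G_i)$ under canonically induced $*$-homomorphisms. Concretely, if $G=\varinjlim G_i$ with connecting maps $\iota_i\colon G_i\to G$, then restricting $\sigma_G$ along $\iota_i$ gives $2$-cocycles $\sigma_G|_{G_i}$ on $G_i$, and functoriality of the reduced twisted construction (together with the fact that inclusions of subgroups induce injective $*$-homomorphisms of reduced twisted group \ca{s}, which follows from the existence of a conditional expectation onto the subalgebra generated by $\lambda_{\sigma_G}(G_i)$, exactly as in the untwisted case treated in \cite{Eym64Fourier}) produces $*$-homomorphisms $\rho_i\colon C^*_r(G_i,\sigma_G|_{G_i})\to C^*_r(G,\sigma_G)$ with $C^*_r(G,\sigma_G)=\overline{\bigcup_i\rho_i(C^*_r(G_i,\sigma_G|_{G_i}))}$. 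Then \cite[Proposition~4.5]{ThiVil24NSCa} (nowhere scatteredness passes to such "inductive limit"-type situations) finishes the argument.

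The proof is genuinely routine; the only point requiring a sliver of care is making sure that the two ingredients borrowed from the untwisted setting —the tensor-product decomposition for product cocycles and the existence of the faithful conditional expectation/the inductive-limit description— are indeed known in the twisted generality, but both are classical (the conditional expectation onto $C^*_r(G_i,\sigma_G|_{G_i})$ is given on the dense $*$-subalgebra by the obvious coefficient projection and is easily seen to be contractive and faithful). For this reason, rather than spelling everything out, I would simply record the statement as an immediate adaptation of \cref{prp:PermPropCG}.

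\begin{proof}
    This follows by repeating the proof of \cref{prp:PermPropCG} with $C^*_r(\,\cdot\,)$ replaced by $C^*_r(\,\cdot\,,\sigma)$. For (i), one identifies $C^*_r(G\times H,\sigma)$ with the minimal tensor product $C^*_r(G,\sigma_G)\otimes C^*_r(H,\sigma_H)$ via $\lambda_{\sigma}(g,h)\mapsto \lambda_{\sigma_G}(g)\otimes\lambda_{\sigma_H}(h)$, notes that the pair $C^*_r(G,\sigma_G)$, $C^*_r(H,\sigma_H)$ has property (F) when $G$ or $H$ is exact by \cite[Proposition~2.17]{BlaKir04PIHaus}, and applies \cref{prp:TensorNSca}. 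For (ii), one uses that $C^*_r(G,\sigma_G)=\overline{\bigcup_i\rho_i(C^*_r(G_i,\sigma_G))}$ for the canonically induced $*$-homomorphisms $\rho_i$ (with faithfulness coming, as in \cite{Eym64Fourier}, from the conditional expectation onto the copy of $C^*_r(G_i,\sigma_G)$) and concludes by \cite[Proposition~4.5]{ThiVil24NSCa}.
\end{proof}
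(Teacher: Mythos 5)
Your proposal is correct and follows exactly the route the paper intends: the paper itself gives no separate argument for \cref{prp:PermPropCGSigm} beyond the remark that it follows mutatis mutandis from the proof of \cref{prp:PermPropCG}, i.e.\ via the identification $C^*_r(G\times H,\sigma_G\times\sigma_H)\cong C^*_r(G,\sigma_G)\otimes C^*_r(H,\sigma_H)$, property (F) from exactness, and \cref{prp:TensorNSca} for (i), and the inductive-limit description together with \cite[Proposition~4.5]{ThiVil24NSCa} for (ii). The extra care you take in justifying the twisted analogues of the two untwisted ingredients is exactly the content hidden in the paper's ``mutatis mutandis.''
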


Whenever $N$ is an abelian group, we denote by $\dual{N}$ its Pontryagin dual.

\begin{pgr}[Inflations and induced $2$-cocycles]\label{pgr:IndCoc}
    Given a discrete group $G$, a normal subgroup $N$ and a $2$-cocycle $\omega$ on $G/N$, recall that the \emph{inflation} of $\omega$, denoted by ${\rm Inf}(\omega)$, is the $2$-cocycle on $G$ given by $\omega\circ (\pi\times\pi)$ with $\pi\colon G\to G/N$ the quotient map.

    For a discrete group $G$ and a $2$-cocycle $\sigma$ on $G$, let $N$ be a central subgroup consisting of $\sigma$-regular elements. Then, it follows from \cite[Proposition~A2]{PacRae92StructTwist} that there exists a $2$-cocycle $\omega$ on $G/N$ such that $\sigma$ is cohomologous\footnote{Two cocycles $\sigma,\sigma'$ are said to be cohomologous if $\sigma=\sigma'\partial\phi$, where $\phi\colon G\to\mathbb{T}$ is a map such that $\phi (e)=1$ and $\partial\phi (g,h)=\phi (g)^{-1}\phi (h)^{-1}\phi (gh)$.} to ${\rm Inf}(\omega)$. As is well known to experts, recall that $\mathrm{Z}(G,\sigma)=\mathrm{Z}(G,\sigma')$ whenever $\sigma$ is cohomologous to $\sigma'$.\footnote{In fact, for any central element $g$, one has $\sigma (g,h)\sigma(h,g)^{-1}=\sigma' (g,h)(\sigma'(h,g))^{-1}$.} Further, the isomorphism class of $C_r^*(G,\sigma)$ only depends on the cohomology class of $\sigma$; see, for example, \cite[Subsection~2.7]{EckWu24}.

    Given such a $2$-cocycle $\omega$ on $G/N$ and some $\gamma\in \dual{N}$, we define the $2$-cocycle $\omega_\gamma$ in $G/N$ as
    \begin{equation}
        \omega_\gamma (g,h)=\gamma (c(g)c(h)c(gh)^{-1})\omega (g,h) \label{eq:cocycle-section}
    \end{equation}
    where $c\colon G/N\to G$ is a section of the quotient map such that $c(e)=e$ (the cohomology class of $\omega_\gamma$ does not depend on the choice of section).\footnote{That $d(g,h):=\gamma(c(g)c(h)c(gh)^{-1})$ is a $2$-cocyle (and hence so is $\omega_\gamma$) is part of the statement of \cite[Theorem~1.2]{PacRae92StructTwist}. A direct proof of this is as follows: We need to show that $d(g,h)d(gh,k)=d(h,k)d(g,hk)$. Using that $\gamma$ is a morphism, one sees that the left hand side of is equal to $\gamma (c(g)c(h)c(k)c(ghk)^{-1})$. For the right hand side, we have $d(h,k)d(g,hk)=\gamma (c(h)c(k)c(hk)^{-1}c(g)c(hk)c(ghk)^{-1})$. Using that $c(h)c(k)c(hk)^{-1}\in N$ and that $N$ is a central subgroup, we can move $c(g)$ to the left and obtain that this quantity is equal to $\gamma (c(g)c(h)c(k)c(hk)^{-1}c(hk)c(ghk)^{-1})=\gamma(c(g)c(h)c(k)c(ghk)^{-1})$, as desired.}

    Note that, whenever $g$ is central, $g$ is $\omega_\gamma$-regular if and only if
    \[
        \gamma (c(g)c(h)c(g)^{-1}c(h)^{-1})\widetilde{\omega} (g,h)=1
    \]
    for all $h \in G$.

    In most of our applications, $N$ will be the twisted center ${\rm Z}(G,\sigma)$.
\end{pgr}

\begin{theorem}[{cf. \cite[Theorem~1.2]{PacRae92StructTwist}}]\label{prop:packer-raeburn}
Let $G$ be a discrete amenable group with $2$-cocycle $\sigma$. Let $N$ be a central subgroup consisting of $\sigma$-regular elements, and let $c\colon G/N\to G$ be a section such that $c(e)=e$. Then, $\sigma$ is cohomologous to ${\rm Inf}(\omega)$ for some $2$-cocycle $\omega$ on $G/N$, and $C^*_r(G,\sigma)$ is isomorphic to a $C(\dual{N})$-algebra where the fibers are given by $C^*_r(G/N, \omega_\gamma)$ for $\gamma \in \dual{N}$.
\end{theorem}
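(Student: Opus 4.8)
The plan is to deduce the statement from the corresponding structure theorem for the \emph{full} twisted group \ca{}, \cite[Theorem~1.2]{PacRae92StructTwist}, and then to use amenability of $G$ to descend to the reduced algebra.

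First I would normalize the cocycle via \ref{pgr:IndCoc}. Since $N$ is central and consists of $\sigma$-regular elements, \cite[Proposition~A2]{PacRae92StructTwist} lets me replace $\sigma$ by a cohomologous cocycle of the form $\mathrm{Inf}(\omega)$ for a $2$-cocycle $\omega$ on $G/N$. This replacement alters neither $C^*_r(G,\sigma)$ up to isomorphism nor, up to cohomology, the cocycles $\omega_\gamma$ of \eqref{eq:cocycle-section}; so I may assume $\sigma=\mathrm{Inf}(\omega)$, and in particular that $\sigma$ restricts trivially to $N\times N$. Then $n\mapsto\lambda_\sigma(n)$ is an honest unitary representation of $N$ with central image in $A:=C^*_r(G,\sigma)$, and since its restriction is (unitarily equivalent to) a multiple of the left regular representation of $N$, it integrates to a faithful representation of $C^*(N)$. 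Thus $C^*(N)\cong C(\dual N)$ embeds unitally into the centre of $A$, giving $A$ the structure of a $C(\dual N)$-algebra whose fibre at $\gamma\in\dual N$ is $A_\gamma:=A/I_\gamma A$, where $I_\gamma$ is the ideal of $A$ generated by $\{\lambda_\sigma(n)-\gamma(n)\,1:n\in N\}$.

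It then remains to identify $A_\gamma$ with $C^*_r(G/N,\omega_\gamma)$, which I would do by exhibiting mutually inverse maps. Fix a section $c\colon G/N\to G$ with $c(e)=e$. On the one hand, the image $w_{\bar g}\in A_\gamma$ of $\lambda_\sigma(c(\bar g))$ defines an $\omega_\gamma$-projective unitary representation of $G/N$ generating $A_\gamma$ — a short cocycle computation from $\sigma=\mathrm{Inf}(\omega)$ and \eqref{eq:cocycle-section}, using that $c(\bar g)c(\bar h)c(\bar g\bar h)^{-1}$ lies in the central subgroup $N$ — so, $G/N$ being amenable, it induces a surjection $q_\gamma\colon C^*_r(G/N,\omega_\gamma)\twoheadrightarrow A_\gamma$. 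On the other hand, $g\mapsto\gamma\bigl(c(\pi(g))^{-1}g\bigr)\,\lambda_{\omega_\gamma}(\pi(g))$ defines a $\sigma$-projective unitary representation of $G$ (another short cocycle check), hence, $G$ being amenable, integrates to a $*$-homomorphism $\rho_\gamma\colon A\to C^*_r(G/N,\omega_\gamma)$; as $\rho_\gamma$ kills $I_\gamma A$ it factors through $\bar\rho_\gamma\colon A_\gamma\to C^*_r(G/N,\omega_\gamma)$, and since $\bar\rho_\gamma\circ q_\gamma$ fixes the generators $\lambda_{\omega_\gamma}(\bar g)$, the surjection $q_\gamma$ is an isomorphism. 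Alternatively, one may simply quote \cite[Theorem~1.2]{PacRae92StructTwist} for $C^*(G,\sigma)$ and transport the resulting decomposition along the canonical map $C^*(G,\sigma)\to C^*_r(G,\sigma)$, which is a $C(\dual N)$-algebra isomorphism because it is an isomorphism (by amenability) that restricts to the identity on $\C[N]$.

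The one point where this reduced version differs from \cite[Theorem~1.2]{PacRae92StructTwist} — and hence the step I expect to require the most care — is the identification of full with reduced twisted group \ca{s} for both $G$ and $G/N$: one uses amenability to see the relevant canonical surjections are isomorphisms, and then checks that these identifications are compatible with the $C(\dual N)$-structure and with the fibre quotient maps. This compatibility is straightforward, since all the maps in question are the identity on finitely supported functions. Everything else is routine: the two cocycle identities above, and the remark that any second-countability hypothesis in the cited reference is immaterial here, as only countable (indeed finitely generated) groups occur in the applications.
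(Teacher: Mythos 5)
Your proposal is correct and matches the paper's treatment: the paper offers no argument beyond the bracketed citation of \cite[Theorem~1.2]{PacRae92StructTwist}, which is precisely your second route (transport the full-algebra decomposition along the canonical surjection $C^*(G,\sigma)\to C^*_r(G,\sigma)$, an isomorphism by amenability). Your direct verification of the fibre identification via the mutually inverse maps $q_\gamma$ and $\bar\rho_\gamma$ is also sound, being essentially a reduced-algebra rerun of Packer--Raeburn's own argument.
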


In light of the previous result, we define the notion of \emph{$2$-cocycle decomposition}, which will play a key role in the study of nowhere scatteredness and pureness.

\begin{definition}\label{dfn:CocyDecomp}
    Let $G$ be a discrete amenable group and $\sigma$ be a $2$-cocycle on $G$. We define recursively a sequence $(\Lambda^i)_{i \in \N_{\geq 0}}$ of compact topological spaces and for each $\lambda \in \Lambda^i$ a sequence of groups $(G_\lambda^i)_{i \in \N_{\geq 0}}$ equipped with 2-cocycles $(\omega_\lambda^i)_{i \in \N_{\geq 0}}$ as follows:
    \begin{itemize}
        \item for $i=0$, set $\Lambda^0 = \{ \star \}$ to be a singleton, $G^0_\star = G$, and $\omega^0_\star = \sigma$;
        \item for $i\geq 1$, we set 
        \[
            \Lambda^{i}=\left\{
                (\lambda_0,\ldots ,\lambda_{i}) \mid 
                \lambda' = (\lambda_0,\ldots ,\lambda_{i-1})\in \Lambda^{i-1},\,\,
                \lambda_{i}\in\dual{\mathrm{Z}(G^{i-1}_{\lambda'},\omega^{i-1}_{\lambda'})}
            \right\}
        \]
        and equip it with the subspace topology inherited from the product space $\Lambda^{i-1} \times \dual{\mathrm{Z}(G^{i-1}_{\lambda'},\omega^{i-1}_{\lambda'})}$. Furthermore we set
        \[ 
            G^{i}_{\lambda}=G^{i-1}_{\lambda'}/\mathrm{Z}(G^{i-1}_{\lambda'},\omega^{i-1}_{\lambda'}), \qquad \omega^{i}_{\lambda}=\rho_{\lambda_{i}},
        \]
        where $\rho$ is a $2$-cocycle on $G^{i}_{\lambda}$ such that $\omega^{i-1}_{\lambda'}$ is cohomologous to ${\rm Inf}(\rho)$, and where $\rho_{\lambda_{i}}$ is given by \eqref{eq:cocycle-section}.
    \end{itemize}
    We refer to the sequence $(G_\lambda^i,\omega_\lambda^i)_{i \in \mathbb{N}_{\geq 0}}$ as the \emph{$2$-cocycle decomposition} of $\sigma$.\footnote{There are technically many 2-cocycle decompositions, since the choices made in this construction are not unique. However, they are all unique up to cohomology, so we tacitly think of the $2$-cocycle decomposition up to this relation.}
\end{definition}

\begin{remark}
    By construction and \autoref{prop:packer-raeburn} it follows that the 2-cocycle decomposition of $\sigma$ has the following property: For every $i \geq 1$ and $\lambda' = (\lambda_0, \ldots, \lambda_{i-1}) \in \Lambda^{i-1}$, the \ca{} $C_r^*(G_{\lambda'}^{i-1}, \omega_{\lambda'}^{i-1})$ is isomorphic to a $C(X)$-algebra with $X = \dual{\mathrm{Z}(G_{\lambda'}^{i-1},\omega_{\lambda'}^{i-1})}$ where the fibers are given by $C_r^*(G_\lambda^i,\omega_\lambda^i)$ for $\lambda \in \Lambda^i$ of the form $\lambda = (\lambda_0, \ldots, \lambda_{i-1}, \lambda_i)$ where $\lambda_i \in X$.
    
    For example, for $i=1$, we have
    \[
        \Lambda^{1}=\left\{
                (\star,\lambda_{1}) \mid 
                \lambda_{1}\in\dual{\mathrm{Z}(G^{0}_{\star},\omega^{0}_{\star})}\right\}
                =\left\{
                (\star,\lambda_{1}) \mid 
                \lambda_{1}\in\dual{\mathrm{Z}(G,\sigma)}
            \right\}\cong \dual{\mathrm{Z}(G,\sigma)}.
    \]

    Further, all the groups are $G^{1}_{\lambda}=G^{0}_{\star}/\mathrm{Z}(G^{0}_{\star},\omega^{0}_{\star})=G/Z(G,\sigma)$, while the coycles are given by
    \[
        \omega^{1}_{\lambda}=\rho_{\lambda_1}
    \]
    where $\rho$ is a $2$-cocycle on $G^{1}_{\lambda}=G/Z(G,\sigma)$ such that $\omega^{0}_{\star}=\sigma$ is cohomologous to ${\rm Inf}(\rho)$, and where $\rho_{\lambda_{1}}$ is given by \eqref{eq:cocycle-section}. In other words, at step $i=1$ the groups and cocycles that appear in the $2$-cocycle decompositions can be identified with those given by \autoref{prop:packer-raeburn} for $N=Z(G,\sigma)$.
\end{remark}

\begin{example}
We compute the 2-cocycle decomposition for the 2-cocycle $\sigma$ on $G = \Z^3$ given by
\[ \sigma((k,l,m),(k',l',m')) = e^{2\pi i ( \theta k l' + k m'/2)} \]
for some irrational number $\theta$. Then $(k,l,m) \in \mathrm{Z}(\Z^3,\sigma)$ if and only if
\[ e^{2\pi i (\theta( k l' - k'l) + (k m' - k' m)/2)} = 1 \quad \text{for all } k',l',m' \in \Z. \]
This happens precisely when $k = l = 0$ and $m \in 2 \Z$, so $\mathrm{Z}(\Z^3,\sigma) = \{ (0,0) \} \times 2\Z$. Note that the formula for $\sigma$ only depends on the class of $m$ and $m'$ in $\Z_2$, and so defines a 2-cocycle $\rho$ on $\Z^3 / \mathrm{Z}(\Z^3,\sigma) \cong \Z^2 \times \Z_2$ such that $\mathrm{Inf} (\rho) = \sigma$. Letting $c \colon \Z^2 \times \Z_2 \to \Z^3$ be the section given by $c(k,l,[m]) = (k,l,m \; \mathrm{mod} \; 2)$ and $\gamma( 0, 0, m) = e^{2\pi i \xi m/2}$ for $\xi \in [0,1)$ be a character on $\mathrm{Z}(\Z^3,\sigma)$, we have that
\[ \rho_\gamma((k,l,[m]),(k',l',[m'])) = e^{2\pi i (\xi (m \; \mathrm{mod} \; 2)(m' \; \mathrm{mod} \; 2) + \theta k l' + k m'/2)} . \]
This describes $\omega_\lambda^1$ on $G^1_\lambda=\Z^3/\mathrm{Z}(\Z^3,\sigma)$ for all $\lambda=(\star,\gamma) \in \Lambda^1$. Next we determine $\mathrm{Z}(\Z^2 \times \Z_2, \rho_\gamma)$ for all characters $\gamma$ on $\mathrm{Z}(\Z^3,\sigma)$. We have that $(k,l,[m]) \in \mathrm{Z}( \Z^2 \times \Z_2, \rho_\gamma)$ if and only if
\[ e^{2\pi i \xi (((m \; \mathrm{mod} \; 2)(m' \; \mathrm{mod} \; 2) - (m' \; \mathrm{mod} \; 2)(m \; \mathrm{mod} \; 2)) + \theta (kl' - k' l) + (k m' - k' m)/2) } = 1, \]
for all $(k',l',[m']) \in \Z^2 \times \Z_2$. This is equivalent to $(k,l,m) \in \mathrm{Z}(\Z^3,\sigma)$, i.e.,\ $k=l=0$ and $m \in 2\Z$, which means that $k=l=0$ and $[m] = [0]$ in $\Z_2$. We conclude that $\mathrm{Z}(\Z^2 \times \Z_2, \omega_\gamma) = \{ 0 \}$ for all $\gamma \in \dual{(\Z^2 \times \Z_2)}$. This means that
\[ \Lambda^2 = \{ ( \star, \lambda_1, \lambda_2) \mid \lambda_1 \in \dual{\mathrm{Z}(G,\sigma)}, \lambda_2 \in \dual{(\mathrm{Z}( \Z^2 \times \Z_2, \omega_{\lambda_1}))} \} \cong \Lambda^1 , \]
and so $\Lambda^{i+1} \cong \Lambda^1$ for all $i \geq 1$. Hence $G_\lambda^i \cong \Z^2 \times \Z_2$ for all $i \geq 1$ and $\lambda \in \Lambda^i \cong \Lambda^1$, and the corresponding 2-cocycle $\omega_\lambda^i$ equals $\omega^1_{(\star,\lambda_1)}$. In other words, the 2-cocycle decomposition of $\sigma$ stabilizes from $i=2$ onward. This always happens for abelian groups $G$; see \Cref{prop:IrrAb}.
\end{example}

\begin{definition}\label{dfn:IrrCocDec}
    Let $G$ be a discrete amenable group and let $\sigma$ be a $2$-cocycle on $G$. We say that $\sigma$ is \emph{non-rational} if
    \[
        [G^{i}_{\lambda}:\mathrm{Z}(G^{i}_{\lambda},\omega^{i}_{\lambda})] = \infty
    \]
    for all $i \in \N_{\geq 0}$ and $\lambda \in \Lambda^i$.
\end{definition}

\begin{remark}
    The name \emph{non-rational} comes from the study of high-dimensional non-commutative tori, where it is known that $C^*_r(\Z^n,\sigma)$ is nowhere scattered if and only if $\sigma$ is a so-called non-rational $2$-cocycle; see the discussion preceeding \autoref{prp:CorNCTorus} for details.
\end{remark}

\begin{proposition}\label{lma:NSCAimpInfInd}
    Let $G$ be a discrete amenable group and let $\sigma$ be a $2$-cocycle on $G$. If $C^*_r(G,\sigma)$ is nowhere scattered, then $\sigma$ is non-rational.
\end{proposition}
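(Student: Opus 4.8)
The plan is to argue contrapositively: assuming $\sigma$ is rational, I will produce an ideal-quotient of $C^*_r(G,\sigma)$ that is elementary, hence show that $C^*_r(G,\sigma)$ is not nowhere scattered. By \cref{dfn:IrrCocDec}, rationality means that there is some level $i$ and some $\lambda \in \Lambda^i$ with $[G^i_\lambda : \mathrm{Z}(G^i_\lambda, \omega^i_\lambda)] < \infty$. I would induct on $i$, using \cref{prop:packer-raeburn} to peel off one layer of the cocycle decomposition at a time. The key structural input is that amenability is inherited by subgroups and quotients, so every group $G^i_\lambda$ appearing in the decomposition is again discrete amenable, and \cref{prop:packer-raeburn} applies at each stage with $N = \mathrm{Z}(G^{i}_\lambda, \omega^{i}_\lambda)$ (a central subgroup of $\sigma$-regular elements by definition of the twisted center).

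First I would set up the base case $i=0$: if $[G : \mathrm{Z}(G,\sigma)] < \infty$, I want to show $C^*_r(G,\sigma)$ is not nowhere scattered. By \cref{prop:packer-raeburn}, $C^*_r(G,\sigma)$ is a $C(\dual{N})$-algebra with $N = \mathrm{Z}(G,\sigma)$ and fibers $C^*_r(G/N, \omega_\gamma)$; since $[G:N] < \infty$, each fiber is a finite-dimensional \ca{}, so each fiber has a nonzero elementary ideal-quotient (indeed is a direct sum of matrix algebras), hence is not nowhere scattered. By \cref{prp:BundlesNSCa}, $C^*_r(G,\sigma)$ is then not nowhere scattered. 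For the inductive step, suppose the obstruction occurs at level $i+1$ for some $\lambda = (\lambda_0, \dots, \lambda_{i+1}) \in \Lambda^{i+1}$, with $\lambda' = (\lambda_0, \dots, \lambda_i) \in \Lambda^i$. I would apply \cref{prop:packer-raeburn} to $(G^i_{\lambda'}, \omega^i_{\lambda'})$ with $N = \mathrm{Z}(G^i_{\lambda'}, \omega^i_{\lambda'})$: the fiber over the point $\lambda_{i+1} \in \dual{N}$ is precisely $C^*_r(G^{i+1}_\lambda, \omega^{i+1}_\lambda)$, which by the inductive hypothesis is not nowhere scattered. Thus $C^*_r(G^i_{\lambda'}, \omega^i_{\lambda'})$ has a fiber that is not nowhere scattered, so by \cref{prp:BundlesNSCa} it is not nowhere scattered. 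Unwinding the decomposition one more step, $C^*_r(G^i_{\lambda'},\omega^i_{\lambda'})$ is itself (cohomologous to a cocycle whose algebra is) a fiber of $C^*_r(G^{i-1}_{\lambda''}, \omega^{i-1}_{\lambda''})$, and iterating down to level $0$ yields that $C^*_r(G,\sigma)$ is not nowhere scattered.

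The one point requiring care — and the main obstacle — is the bookkeeping linking \cref{prop:packer-raeburn} to \cref{dfn:CocyDecomp}. In the definition of the cocycle decomposition one first replaces $\omega^{i-1}_{\lambda'}$ by a cohomologous inflation $\mathrm{Inf}(\rho)$ and then forms $\omega^i_\lambda = \rho_{\lambda_i}$ via \eqref{eq:cocycle-section}, whereas \cref{prop:packer-raeburn} directly describes the fibers as $C^*_r(G/N, \omega_\gamma)$ where $\omega_\gamma$ is built from a section of the quotient map. I would verify that these two constructions agree up to cohomology: the cohomology class of $\rho$ is determined by the requirement that $\mathrm{Inf}(\rho) \sim \omega^{i-1}_{\lambda'}$ (this is \cite[Proposition~A2]{PacRae92StructTwist}), and then \eqref{eq:cocycle-section} shows $\rho_\gamma$ is independent of the chosen section up to cohomology, so the fiber $C^*_r(G^i_\lambda, \omega^i_\lambda)$ of \cref{dfn:CocyDecomp} is $*$-isomorphic to the fiber $C^*_r(G/N,\omega_\gamma)$ of \cref{prop:packer-raeburn}. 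Since, by the remark in \ref{pgr:IndCoc}, the isomorphism class of $C^*_r(\cdot,\cdot)$ and the twisted center depend only on the cohomology class, everything descends correctly. With this identification in hand, the induction runs without further difficulty, and combining the base case and inductive step gives the contrapositive of the statement.
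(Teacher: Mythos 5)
Your argument is correct and is essentially the paper's proof read contrapositively: both induct along the $2$-cocycle decomposition, using \cref{prop:packer-raeburn} to realize each $C^*_r(G^{i}_{\lambda},\omega^{i}_{\lambda})$ as a fiber of the previous level's algebra and the quotient direction of \cref{prp:BundlesNSCa} to transfer (failure of) nowhere scatteredness --- the paper pushes the property down to conclude every $G^{i}_{\lambda}$ is infinite, while you pull the failure up from a finite-dimensional fiber. The bookkeeping point you flag, namely matching $\omega^{i}_{\lambda}$ with the Packer--Raeburn fiber cocycle up to cohomology, is exactly what \ref{pgr:IndCoc} and \cref{dfn:CocyDecomp} are set up to guarantee, so there is no gap.
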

\begin{proof}
    Let $(G^i_\lambda ,\omega^i_\lambda)$ be the $2$-cocycle decomposition. We start proving by induction on $i \in \N_{\geq 0}$ that $C^*_r (G^{i}_{\lambda},\omega^{i}_{\lambda})$ is nowhere scattered for all $\lambda \in \Lambda^i$. For $i=0$, $C^*_r (G,\sigma)$ is nowhere scattered by assumption.

    Now fix $i \geq 1$ and assume that $C^*_r (G^{i-1}_{\lambda'},\omega^{i-1}_{\lambda'})$ is nowhere scattered for all $\lambda'\in\Lambda^{i-1}$. By the definition of our decomposition and \cref{prop:packer-raeburn}, it follows that for any $\lambda = (\lambda_0, \ldots, \lambda_i) \in\Lambda^i$ the \ca{} $C^*_r (G^{i}_{\lambda},\omega^{i}_{\lambda})$ corresponds to one of the fibers of a $C(X)$-algebra which is isomorphic to $C^*_r (G^{i-1}_{\lambda'},\omega^{i-1}_{\lambda'})$ for $\lambda'=(\lambda_0,\ldots,\lambda_{i-1}) \in \Lambda^{i-1}$, where $X = \dual{\mathrm{Z}(G_{\lambda'}^{i-1},\omega_{\lambda'}^{i-1})}$. Thus, using \cref{prp:BundlesNSCa}, we deduce that $C^*_r (G^{i}_{\lambda},\omega^{i}_{\lambda})$ is nowhere scattered.

    In particular, it follows that each $G^{i}_{\lambda}$ must be infinite, since otherwise the \ca{} $C^*_r (G^{i}_{\lambda},\omega^{i}_{\lambda})$ is finite-dimensional, hence not nowhere scattered. Thus, the statement of the proposition follows from the fact that for every $i \in \N_{\geq 0}$ and $\lambda \in \Lambda^i$ one has
    \[
        [G^{i}_{\lambda}:\mathrm{Z}(G^{i}_{\lambda},\omega^{i}_{\lambda})]
        =
        \vert G^{i+1}_{\widetilde{\lambda}}\vert
    \]
    for some $\widetilde{\lambda} \in \Lambda^{i+1}$.
\end{proof}

\begin{example}
Let $G$ be any infinite discrete amenable group equipped with a $2$-cocycle $\sigma$ such that $\mathrm{Z}(G,\sigma) = \{ e \}$ (this is the case for instance if $(G,\sigma)$ satisfies Kleppner's condition). Then $\mathrm{Z}(G_\lambda^i, \sigma_\lambda^i) = \{ e \}$ for all $i \in \N_{\geq 0}$ and $\lambda \in \Lambda^i$, so $\sigma$ is non-rational. In particular, if $G$ has trivial center, every $2$-cocycle on $G$ is non-rational. This shows that non-rationality of $\sigma$ is not a sufficient condition for the nowhere scatteredness of $C^*_r (G,\sigma)$ in general, since $G$ could be e.g.\ any infinite amenable group with trivial center.
\end{example}

\section{Finitely generated nilpotent groups}\label{sec:TwistedCalgs}

This section is devoted to the following question:

\begin{question}\label{qst:NilQst}
Let $G$ be a discrete nilpotent group. When is $C_r^*(G ,\sigma )$ nowhere scattered? When is it pure? When is it $\mathcal{Z}$-stable?
\end{question}

Under the assumption of being finitely generated, we will give a complete answer to \cref{qst:NilQst} in \cref{prp:StrIrrCharNSCA}, where we show that these properties are all equivalent and coincide with $\sigma$ being non-rational. When $G$ is abelian or is in a certain family of $2$-step nilpotent groups, we give characterizations of non-rationality of $\sigma$ that are easier to verify; see Subsections \ref{subsec:AbGrp} and \ref{subsec:GenHeis} respectively.
The reader is referred to \cite{EcMcKee18,
EcGilMc19FdrVirtNil} for an introduction to the revelant properties of virtually polycyclic groups and their \ca{s}.

One of the ingredients to deduce $\mathcal{Z}$-stability of our algebras will be the following result due to Eckhardt and Wu \cite[Theorem~A]{EckWu24} (see also \cite{EcGilMc19FdrVirtNil} and \cite[Theorem 5.5.]{BeEnva22}). We specialize it here to our case of interest:

\begin{theorem}\label{thm:BeEnva22}
    Let $G$ be a virtually polycyclic group and $\sigma$ be a $2$-cocyle on $G$. Then, $C^*_r (G,\sigma )$ has finite nuclear dimension.

    If, additionally, $G$ is nilpotent, $\mathrm{FC}(G,\sigma) = \{ e \}$ and $G$ is infinite, then $C^*_r (G,\sigma )$ is a unital, separable, simple, nuclear, $\mathcal{Z}$-stable \ca{}.
\end{theorem}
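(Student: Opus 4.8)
The plan is to derive both assertions by combining results from the literature with a handful of elementary observations. For the first assertion I would invoke \cite[Theorem~A]{EckWu24} directly: it states that $C^*_r(G,\sigma)$ has finite nuclear dimension for every virtually polycyclic group $G$ and every $2$-cocycle $\sigma$, refining the finite decomposition rank bounds of \cite{EcMcKee18,EcGilMc19FdrVirtNil} to the twisted setting. No further argument is needed here.

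For the second assertion I would check the listed properties in turn. Unitality is immediate, since $G$ is discrete and $\lambda_\sigma(e)$ is a unit. Separability holds because a virtually polycyclic group is finitely generated, hence countable, so $\ell^2(G)$ is separable and therefore so is $C^*_r(G,\sigma)\subseteq B(\ell^2(G))$. Nuclearity follows from amenability of $G$ (all virtually polycyclic groups are amenable): the reduced twisted group \ca{} then coincides with the full one and is nuclear; alternatively, it is already implied by the first assertion, as finite nuclear dimension entails nuclearity. Finally, $C^*_r(G,\sigma)$ is infinite-dimensional, because $\{\lambda_\sigma(g)\}_{g\in G}$ is linearly independent and $G$ is infinite.

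The key structural input is simplicity, and this is where the hypothesis $\mathrm{FC}(G,\sigma)=\{e\}$ is used: by definition it says that every $\sigma$-regular element of $G$ other than $e$ has an infinite conjugacy class, that is, $(G,\sigma)$ satisfies Kleppner's condition. Since $G$ is amenable one has $C^*_r(G,\sigma)=C^*(G,\sigma)$, and the classical characterization of simplicity of twisted group \ca{s} of amenable groups in terms of Kleppner's condition (see \cite{PacRae92StructTwist}, and the treatment in \cite[Theorem~5.5]{BeEnva22}) yields that $C^*_r(G,\sigma)$ is simple. Together with unitality and infinite-dimensionality this shows that $C^*_r(G,\sigma)$ is non-elementary, since a unital elementary \ca{} is a matrix algebra $M_n$.

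With all of this in place, $\mathcal{Z}$-stability follows from the relevant implication of the Toms--Winter theorem for simple separable unital non-elementary nuclear \ca{s}, namely that finite nuclear dimension implies $\mathcal{Z}$-stability (see \cite{Win12Pure,BBSTWW19,CETWW21} and the discussion in the introduction). Equivalently, the entire second assertion is essentially \cite[Theorem~5.5]{BeEnva22}, whose proof one need only feed with \cite[Theorem~A]{EckWu24} in place of its untwisted, decomposition-rank inputs. I do not anticipate a genuine obstacle here: the only delicate points are bookkeeping, namely confirming that the hypothesis $\mathrm{FC}(G,\sigma)=\{e\}$ is exactly the form of Kleppner's condition required by the cited simplicity criterion, and checking that the regularity property one extracts at the end is $\mathcal{Z}$-stability rather than merely strict comparison.
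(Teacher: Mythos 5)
Your proposal is correct and matches the paper's treatment: the paper states this result without proof, as a specialization of \cite[Theorem~A]{EckWu24} together with \cite[Theorem 5.5]{BeEnva22}, which are exactly the sources you invoke. The additional bookkeeping you supply (Kleppner's condition from $\mathrm{FC}(G,\sigma)=\{e\}$, non-elementariness, and the nuclear-dimension-implies-$\mathcal{Z}$-stability step) is accurate and consistent with how those cited results are proved.
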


Although we will not use this in our proof of \cref{prp:thmA}, we note that one can deduce pureness (and thus strict comparison) of twisted virtually polycyclic group \ca{s} by employing the following dimension reduction phenomenon. Recall the definition of the Global Glimm Property from \cref{rmk:GGP}.

\begin{theorem}\label{prp:NilStCompIFFNSCa}
    Let $G$ be a virtually polycyclic group and $\sigma$ be a $2$-cocyle on $G$. Then, $C^*_r (G,\sigma )$ is pure if and only if $C^*_r (G,\sigma )$ has the Global Glimm Property. 

    Additionally, if $G$ is finitely generated and virtually nilpotent, then $C^*_r (G,\sigma )$ is pure if and only if $C^*_r (G,\sigma )$ is nowhere scattered.
\end{theorem}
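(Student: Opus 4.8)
The plan is to establish both statements by combining the known implications "pure $\Rightarrow$ Global Glimm Property $\Rightarrow$ nowhere scattered" (valid for any \ca{}) with the reverse implications coming from the finite nuclear dimension guaranteed by \cref{thm:BeEnva22}. For the first equivalence, the forward direction is immediate: pureness implies almost divisibility of $\Cu(A)$, and almost divisibility of the Cuntz semigroup is well known to imply the Global Glimm Property (indeed this is one of the standard reformulations; see \cite{ThiVil23Glimm}). Conversely, suppose $C^*_r(G,\sigma)$ has the Global Glimm Property; then in particular it is nowhere scattered (this implication is recalled in \cref{rmk:GGP}). By \cref{thm:BeEnva22}, $C^*_r(G,\sigma)$ has finite nuclear dimension, hence finite decomposition rank is \emph{not} automatic --- but we do not need it: the statement of \cref{prp:PureFinDR} was phrased for finite decomposition rank, so here I would instead invoke the more general fact cited in its proof, namely that by \cite[Theorem~3.1]{RobTik17NucDim} a nowhere scattered \ca{} of finite nuclear dimension without simple purely infinite ideal-quotients is $(m,N)$-pure, hence pure by \cite[Theorem~5.7]{AntPerThiVil24Pure}. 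The point is that a twisted group \ca{} of a (virtually polycyclic, hence amenable) group is quasidiagonal and stably finite, so it has no simple purely infinite ideal-quotient; alternatively, finite decomposition rank of $C^*_r(G,\sigma)$ for virtually polycyclic $G$ is available from \cite{EcGilMc19FdrVirtNil,EcMcKee18} in the untwisted case and extends to the twisted case, so one may simply apply \cref{prp:PureFinDR} verbatim. Either way, nowhere scatteredness upgrades to pureness.

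For the second equivalence (under the additional hypothesis that $G$ is finitely generated and virtually nilpotent), the implication "pure $\Rightarrow$ nowhere scattered" is trivial. The content is the converse: assuming $C^*_r(G,\sigma)$ is nowhere scattered, I want pureness. By the first part of the theorem it suffices to produce the Global Glimm Property, or --- more directly --- I would again apply \cref{prp:PureFinDR}: a virtually nilpotent finitely generated group is virtually polycyclic, so by \cref{thm:BeEnva22} (or the decomposition rank results of \cite{EcGilMc19FdrVirtNil,EcMcKee18}, extended to the twisted setting) $C^*_r(G,\sigma)$ has finite decomposition rank, and then nowhere scattered plus finite decomposition rank gives pure. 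So the second equivalence is in fact a direct corollary of \cref{prp:PureFinDR} together with the finite decomposition rank input.

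The one genuine point requiring care --- and the step I expect to be the main obstacle --- is justifying finite decomposition rank (not merely finite nuclear dimension) for \emph{twisted} group \ca{s} of virtually polycyclic groups, since the cited literature \cite{EcGilMc19FdrVirtNil,EcMcKee18} treats the untwisted case and \cref{thm:BeEnva22} as stated only asserts finite \emph{nuclear} dimension. The cleanest fix is to avoid the issue: rather than appeal to \cref{prp:PureFinDR}, argue directly from \cite[Theorem~3.1]{RobTik17NucDim} as in the proof of \cref{prp:PureFinDR}, using that $C^*_r(G,\sigma)$ has finite nuclear dimension (by \cref{thm:BeEnva22}) and has no simple purely infinite ideal-quotients. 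The latter holds because every ideal-quotient of $C^*_r(G,\sigma)$ is again nuclear and stably finite --- $G$ being amenable, $C^*_r(G,\sigma) = C^*(G,\sigma)$ is quasidiagonal, and quasidiagonality (hence stable finiteness) passes to ideals and quotients --- so no such ideal-quotient can be purely infinite. Then $(m,N)$-pureness from \cite[Theorem~3.1]{RobTik17NucDim} and \cite[Theorem~5.7]{AntPerThiVil24Pure} deliver pureness, and the Global Glimm Property follows from pureness. This makes both equivalences go through using only finite nuclear dimension, which is exactly what \cref{thm:BeEnva22} provides.
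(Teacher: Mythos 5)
Your proof of the second equivalence (via finite decomposition rank of $C^*_r(G,\sigma)$ for finitely generated virtually nilpotent $G$, cited from \cite{EcGilMc19FdrVirtNil}, followed by \cref{prp:PureFinDR}) is exactly the paper's argument. The problem is your treatment of the first equivalence. The paper proves it by combining \cite[Theorem~A]{EckWu24} with \cite[Theorem~B]{AntPerThiVil24Pure}, i.e.\ with the dimension-reduction theorem ``finite nuclear dimension $+$ Global Glimm Property $\Rightarrow$ pure,'' which needs no hypothesis about purely infinite ideal-quotients. You instead downgrade the Global Glimm Property to nowhere scatteredness and invoke \cite[Theorem~3.1]{RobTik17NucDim}, which does require that $C^*_r(G,\sigma)$ have no simple purely infinite ideal-quotients, and your justification of that hypothesis is where the argument breaks: quasidiagonality passes to $\mathrm{C}^*$-subalgebras (hence to ideals) but \emph{not} to quotients --- every \ca{}, including $\mathcal{O}_2$, is a quotient of its quasidiagonal cone. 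For virtually polycyclic groups that are not virtually nilpotent this is not a repairable technicality: such groups need not be strongly quasidiagonal (e.g.\ $\Z^2\rtimes_A\Z$ with $A$ hyperbolic), which is precisely why only finite nuclear dimension, and not finite decomposition rank, is available in that generality.

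A structural warning sign you should have noticed: if your route were valid, it would prove ``nowhere scattered $\Leftrightarrow$ pure'' for \emph{all} virtually polycyclic groups, which would make the second half of the theorem and its extra hypotheses (finitely generated, virtually nilpotent) redundant. The whole point of the statement's two-tier formulation is that for general virtually polycyclic groups one can only trade pureness against the (a priori stronger) Global Glimm Property, while the stronger equivalence with nowhere scatteredness is obtained exactly where finite decomposition rank is known --- since simple purely infinite algebras have infinite decomposition rank, that is what rules out the bad ideal-quotients. Replace your argument for the first equivalence by the direct appeal to \cite[Theorem~B]{AntPerThiVil24Pure}, and drop the quasidiagonality fallback for the second.
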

\begin{proof}
    The first part of the statement follows from \cite[Theorem~A]{EckWu24} together with \cite[Theorem~B]{AntPerThiVil24Pure}. For the second part, use that $C^*_r(G,\sigma)$ has finite decomposition rank (\cite{EcGilMc19FdrVirtNil}) and \cref{prp:PureFinDR}.
\end{proof}


The following lemma is well-known, but we provide a proof for completeness.

\begin{lemma}\label{lem:FcCenterIsCenter}
Let $G$ be a torsion-free nilpotent group. Then $\mathrm{Z}(G) = \mathrm{FC}(G)$.
\end{lemma}

\begin{proof}
Since $G$ is a torsion-free nilpotent group, \cite[Corollary 2.22]{CleMajZym17} implies that group $G/\mathrm{Z}(G)$ is torsion-free. Hence the subgroup $\mathrm{FC}(G)/\mathrm{Z}(G)$ must be torsion-free as well.

On the other hand, it is also the case that $\mathrm{FC}(G)/\mathrm{Z}(G)$ is torsion, which we prove by induction on the length $n$ of its upper central series. If $n=0$, that is, if $G$ is the trivial group, then the statement holds trivially. Now suppose it holds for all torsion-free nilpotent groups with an upper central series of length less than $n$ and let $G$ be torsion-free nilpotent of length $n$. Let $g \in \mathrm{FC}(G)$. The group $\overline{G} = G/\mathrm{Z}(G)$ is torsion-free and nilpotent of length less than $n$ by \cite[Corollary 2.22]{CleMajZym17} and $\overline{g} = g \mathrm{Z}(G) \in \mathrm{FC}(\overline{G})$, so the induction hypothesis gives a positive natural number $k$ such that $\overline{g}^k \in \mathrm{Z}(\overline{G})$. This means that for every $h \in G$ the commutator $[g^k,h]$ lies in $\mathrm{Z}(G)$. It follows from the commutator identities (cf.\ \cite[Lemma 1.3]{CleMajZym17})
\[ [g_1,g_2g_3] = [g_1,g_3](g_3 [g_1,g_2]g_3^{-1}), \quad [g_1g_2,g_3] = (g_2[g_1,g_3]g_2^{-1})[g_1,g_3] \quad g_i \in G, \]
that the map $\phi\colon G \to G$ given by $h \mapsto [g^k,h]$ is a homomorphism. Since $g^k$ has finite conjugacy class, the index of its centralizer $C(g^k) = \{ h \in G : [g^k,h] = e \}$ in $G$ is finite. Since the centralizer coincides with the kernel of $\phi$, we conclude that $\mathrm{im} \phi \cong G/\mathrm{ker}\phi$ is finite. Using again the commutator identities, there must exist a positive integer $l$ such that
\[ e = [g^k,h^l] = [g^k,h]^l = [g^{kl},h] , \qquad h \in G . \]
We conclude that $g^{kl} \in \mathrm{Z}(G)$. Since $g \in \mathrm{FC}(G)$ was arbitrary, this shows that $\mathrm{FC}(G) / \mathrm{Z}(G)$ has torsion, which finishes the induction proof.

Since $\mathrm{FC}(G) / \mathrm{Z}(G)$ is both torsion-free and torsion, it must be trivial, so $\mathrm{FC}(G) = \mathrm{Z}(G)$.
\end{proof}

From \Cref{lem:FcCenterIsCenter} it follows that $\mathrm{Z}(G,\sigma)=\mathrm{FC}(G,\sigma)$ for any 2-cocycle $\sigma$. Thus, for any such infinite group, \cref{thm:BeEnva22} states that $C^*_r(G,\sigma)$ is simple and $\mathcal{Z}$-stable if and only if $\mathrm{Z}(G,\sigma)$ is trivial. With view towards \cref{prp:StrIrrCharNSCA}, we first show that for any infinite finitely generated nilpotent group $G$ the algebra $C^*_r(G,\sigma)$ is a finite direct sum of simple $\mathcal{Z}$-stable algebras whenever $\mathrm{Z}(G,\sigma)$ is finite; see \cref{prp:TwistCentFin}.

\begin{lemma}\label{prp:FinIndInc}
    Let $G$ be a finitely generated nilpotent group, let $\sigma$ a $2$-cocycle on $G$, and let $N\subseteq G$ a finite index normal subgroup. Then,
    \begin{enumerate}
        \item $\mathrm{Z}(G)\cap N\subseteq \mathrm{Z}(N)$ is of finite index;
        \item $\mathrm{Z}(G,\sigma )\cap N\subseteq \mathrm{Z}(N,\sigma )$ is of finite index.
    \end{enumerate}
\end{lemma}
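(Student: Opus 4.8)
The plan is to prove (i) first and then deduce (ii) by essentially the same argument, working with the twisted center in place of the ordinary center. Throughout I will use the standard fact that a finitely generated nilpotent group is Noetherian (every subgroup is finitely generated) and that subgroups and quotients of finitely generated nilpotent groups are again finitely generated nilpotent; in particular $\mathrm{Z}(N)$ is a finitely generated abelian group.

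For (i), first note that $\mathrm{Z}(G)\cap N$ is normal in $N$ (it is even central), so it makes sense to ask for its index in $\mathrm{Z}(N)$. The key observation is that conjugation by $G$ gives an action of the finite group $Q = G/N$ on $\mathrm{Z}(N)$ (this is well-defined since $N$ centralizes $\mathrm{Z}(N)$, so the $N$-action is trivial), and that $\mathrm{Z}(G)\cap N$ contains the fixed-point subgroup $\mathrm{Z}(N)^{Q}$: indeed an element of $\mathrm{Z}(N)$ fixed by all of $G$ commutes with $N$ and with a set of coset representatives for $N$ in $G$, hence with all of $G$. So it suffices to show that $\mathrm{Z}(N)^{Q}$ has finite index in $\mathrm{Z}(N)$. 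Writing $A = \mathrm{Z}(N)$, a finitely generated abelian group, and letting $Q$ act on $A$, I claim that $A / A^{Q}$ is finite up to the free rank being preserved --- more precisely, the subgroup $A_0 = \bigcap_{q \in Q} \ker(q - 1)$ (additive notation) has finite index in $A$ provided $A$ has no ``$Q$-free'' part, which need not hold in general. To fix this, I would instead argue as follows: the map $A \to A$ given by $a \mapsto \sum_{q\in Q} q\cdot a$ (the norm/averaging map, written multiplicatively as $a \mapsto \prod_{q\in Q} {}^{q}a$) lands in $A^{Q}$ and, composed with the inclusion $A^{Q}\hookrightarrow A$, equals multiplication by $|Q|$ on... no --- here is the clean route: for any $a \in A$, the element $\prod_{q \in Q}{}^{q}a$ is $Q$-fixed and lies in $\mathrm{Z}(G)\cap N$. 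Hence the subgroup $\{\prod_{q}{}^{q}a : a \in A\}$, which is the image of the $n$-th power map twisted by the $Q$-action, sits inside $\mathrm{Z}(G)\cap N$. One then checks that this image has finite index in $A$ because $A$ is finitely generated abelian and $A^{|Q|} \subseteq \{\prod_q {}^q a\} \cdot [\text{commutator-type terms which vanish as } A \text{ is abelian}]$... and $A^{|Q|}$ has finite index in $A$ since $A$ is finitely generated. This gives (i).

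For (ii), recall that $\mathrm{Z}(N,\sigma)$ consists of the $g \in \mathrm{Z}(N)$ with $\sigma(g,h) = \sigma(h,g)$ for all $h \in N$, equivalently $g \in \mathrm{Z}(N)$ with $\beta_g := (h \mapsto \sigma(g,h)\sigma(h,g)^{-1}) \equiv 1$ on $N$; the map $g \mapsto \beta_g$ is a homomorphism from $\mathrm{Z}(N)$ to the character group $\dual{N}$ (using that $N$ normalizes things appropriately and the cocycle identity). I would run the same averaging argument: given $g \in \mathrm{Z}(N,\sigma)$, the element $g^{*} := \prod_{q \in Q}{}^{q}g$ lies in $\mathrm{Z}(G)$ by part (i), and a cocycle-identity computation shows $\beta_{g^{*}}(h) = \prod_{q\in Q}\beta_{{}^qg}(h)$ up to a coboundary correction that is trivial on the center; since $g$ is $\sigma$-regular in $N$ one gets that $g^{*}$ is $\sigma$-regular against all $G$-conjugates of elements of $N$, i.e. against all of $G$, so $g^{*} \in \mathrm{Z}(G,\sigma)\cap N$. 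Combined with the index computation from (i), the image of the averaging map lands in $\mathrm{Z}(G,\sigma)\cap N$ and has finite index in $\mathrm{Z}(N,\sigma)$, giving (ii).

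I expect the \textbf{main obstacle} to be handling the interaction between the $Q$-averaging (power) map and the $2$-cocycle twist cleanly in part (ii): the naive identity $\beta_{\prod_q {}^q g} = \prod_q \beta_{{}^q g}$ is only true modulo coboundary terms, and one has to verify that these correction terms do not obstruct $\sigma$-regularity. The footnote in \ref{pgr:IndCoc} --- that $\sigma(g,h)\sigma(h,g)^{-1}$ is a cohomology invariant for central $g$ --- should be exactly what makes the correction terms disappear when $g^{*}$ is central, so the argument should go through, but this bookkeeping is the delicate point. A secondary technical point is making the finite-index claim for the image of $a \mapsto \prod_q {}^q a$ in a finitely generated abelian group with $Q$-action fully rigorous; the fastest fix is to observe $a^{|Q|}$ differs from $\prod_q {}^q a$ by elements of the (finite-index, by induction on a $Q$-invariant filtration, or by finite generation directly) augmentation-type subgroup, and $a \mapsto a^{|Q|}$ already has finite-index image since the group is finitely generated abelian.
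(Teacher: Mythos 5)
Your reduction of part (i) to the claim that the fixed subgroup $A^{Q}$ has finite index in $A=\mathrm{Z}(N)$, where $Q=G/N$ acts by conjugation, is a reasonable start, but the argument you offer for that claim is where the proof genuinely breaks. Neither the image of the norm map $a\mapsto\prod_{q\in Q}{}^{q}a$ nor the fixed subgroup $A^{Q}$ need have finite index in a finitely generated abelian group carrying an action of a finite group: for $Q=\mathbb{Z}/2$ acting on $\mathbb{Z}^{2}$ by swapping coordinates, $A^{Q}$ is the diagonal, the image of the norm map lies in the diagonal, and the subgroup generated by the elements ${}^{q}a\,a^{-1}$ is the infinite antidiagonal. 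In particular $A^{|Q|}=2\mathbb{Z}^{2}$ is \emph{not} contained in the image of the norm map, and the correction terms do not ``vanish because $A$ is abelian''; they vanish only when the action is trivial. So finite generation alone cannot close this step --- you must use nilpotency of $G$, which is precisely what excludes the example above (the group $\mathbb{Z}^{2}\rtimes\mathbb{Z}/2$ with the swap action is not nilpotent). A correct repair inside your framework: nilpotency forces the conjugation action of $G$ on $\mathrm{Z}(N)$ to be unipotent, i.e.\ the filtration $A\supseteq[G,A]\supseteq[G,[G,A]]\supseteq\cdots$ terminates, and an induction along it using the norm map shows that a fixed power of $|Q|$ multiplies $A$ into $A^{Q}=\mathrm{Z}(G)\cap N$, which gives finite index since $A$ is finitely generated. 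The paper proves (i) by a different route: in a finitely generated nilpotent group $\mathrm{Z}(G)\subseteq\mathrm{FC}(G)$ has finite index, $\mathrm{FC}(G)\cap N=\mathrm{FC}(N)$ for finite-index subgroups (Baer), and $\mathrm{Z}(G)\cap N\subseteq\mathrm{Z}(N)\subseteq\mathrm{FC}(N)$ together with the fact that finitely generated nilpotent torsion groups are finite.

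For part (ii), your averaging of conjugates runs into exactly the bookkeeping you flag as the main obstacle, and in addition it silently requires $\mathrm{Z}(N,\sigma)$ to be stable under conjugation by all of $G$, which is not obvious. The paper sidesteps both issues by taking \emph{powers} rather than products of conjugates: for $h\in\mathrm{Z}(N,\sigma)$ and $g\in G$, choose $l$ with $h^{l}\in\mathrm{Z}(G)$ (using (i)) and $k$ with $g^{k}\in N$; Kleppner's lemma makes $\widetilde{\sigma}$ bimultiplicative in this situation, so $\widetilde{\sigma}(h^{lk},g)=\widetilde{\sigma}(h^{l},g)^{k}=\widetilde{\sigma}(h^{l},g^{k})=1$, whence $h^{lk}\in\mathrm{Z}(G,\sigma)\cap N$ and the quotient $\mathrm{Z}(N,\sigma)/(\mathrm{Z}(G,\sigma)\cap N)$ is a finitely generated nilpotent torsion group, hence finite. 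I would adopt this power trick: it eliminates the coboundary corrections entirely and needs no conjugation-invariance of the twisted center.
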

\begin{proof}
    To prove (i), note that $\mathrm{Z}(G)\subseteq \mathrm{FC}(G)$ is of finite index (see e.g. \cite[Lemma~2.3]{EckGil16MJ}). Thus, $\mathrm{Z}(G)\cap N\subseteq \mathrm{FC}(G)\cap N$ is a finite index inclusion (since our groups are nilpotent, the quickest way to see this is to note that  $\mathrm{FC}(G)\cap N/\mathrm{Z}(G)\cap N$ is a torsion finitely generated nilpotent group, and thus finite). Further, since $N$ is of finite index, it is well-known that $\mathrm{FC}(G)\cap N=\mathrm{FC}(N)$ (see e.g. \cite[Lemma~2]{Bae48}). So, we have that
    \[
        \mathrm{Z}(G)\cap N\subseteq \mathrm{FC}(G)\cap N=\mathrm{FC}(N)
    \]
    is of finite index. Since $\mathrm{Z}(G)\cap N\subseteq \mathrm{Z}(N)\subseteq \mathrm{FC}(N)$, it follows that $\mathrm{Z}(G)\cap N\subseteq \mathrm{Z}(N)$ is also of finite index.

    For (ii), take $h\in \mathrm{Z}(N,\sigma)$ and let $g\in G$. Using (i), find $l\leq [\mathrm{Z}(N):\mathrm{Z}(G)\cap N]<\infty$ such that $h^l$ is in the center of $G$, and let $k\leq [G:N]$ be such that $g^k\in N$. Then, the element $h^{kl}$ is in the center of $G$ (since $h^l$ already is). By \cite[Lemma 7.1]{Kl65} we have that $\tilde{\sigma}$ is a bicharacter\footnote{Here, a function $\beta \colon G \times G \to \T$ is called a \emph{bicharacter} if $\beta( \cdot, h)$ and $\beta(g, \cdot)$ are characters on $G$ for all $g,h \in G$.} when restricted to an abelian subgroup. Hence, since $h^l$ is in the center of $G$, we have that
    \[
        \widetilde{\sigma}(h^{lk},g)=
        \widetilde{\sigma}((h^{l})^{k},g)=
        \widetilde{\sigma}(h^{l},g)^{k}=
        \widetilde{\sigma}(h^{l},g^{k})=1
    \]
    where in the last step we used that $g^{k}\in N$ and $h^l\in \mathrm{Z}(N,\sigma)$. This shows $h^{lk}\in \mathrm{Z}(G,\sigma)\cap N$.

    Thus, the quotient $\mathrm{Z}(N,\sigma )/\mathrm{Z}(G,\sigma)\cap N$ is a finitely generated, nilpotent, torsion group, hence finite as desired.
\end{proof}

\begin{proposition}\label{prp:TwistCentFin}
    Let $G$ be an infinite finitely generated nilpotent group, and let $\sigma$ be a $2$-cocycle. Assume that $\mathrm{Z}(G,\sigma )$ is finite. Then, $C^*_r (G,\sigma )$ is a finite direct sum of unital simple separable nuclear $\mathcal{Z}$-stable algebras.
\end{proposition}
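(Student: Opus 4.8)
The plan is to induct on the index $[T(G):\mathrm Z(G,\sigma)]$, where $T(G)$ is the torsion subgroup of $G$ --- finite, since $G$ is finitely generated nilpotent --- noting that $\mathrm Z(G,\sigma)\subseteq T(G)$ because $\mathrm Z(G,\sigma)$ is a finite subgroup of the finitely generated abelian group $\mathrm Z(G)$. The two tools are \cref{prop:packer-raeburn} (applicable since nilpotent groups are amenable), which I use to split off the twisted center, and \cref{thm:BeEnva22}, which identifies the simple summands. Writing $Z:=\mathrm Z(G,\sigma)$, I first record the following fact: if $H$ is an infinite finitely generated nilpotent group and $\tau$ a $2$-cocycle with $\mathrm Z(H,\tau)=\{e\}$, then $C^*_r(H,\tau)$ is unital, separable, simple, nuclear and $\mathcal Z$-stable. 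Indeed $\mathrm{FC}(H,\tau)$ is a normal subgroup with $\mathrm{FC}(H,\tau)\cap\mathrm Z(H)=\mathrm Z(H,\tau)$ and $[\mathrm{FC}(H,\tau):\mathrm Z(H,\tau)]\le[\mathrm{FC}(H):\mathrm Z(H)]<\infty$ (by \cite[Lemma~2.3]{EckGil16MJ}), so $\mathrm Z(H,\tau)=\{e\}$ forces $\mathrm{FC}(H,\tau)$ to be a finite normal subgroup of a nilpotent group meeting its center trivially, hence $\mathrm{FC}(H,\tau)=\{e\}$ and \cref{thm:BeEnva22} applies.

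The technical core is the claim that, if $Z=\mathrm Z(G,\sigma)$ is finite, then $\mathrm Z(G/Z,\omega_\gamma)$ is finite for every $\gamma\in\dual Z$. Write $\pi\colon G\to G/Z$. On one hand, $\mathrm Z(G/Z)=\pi(W)$ with $W:=\{g\in G:[g,G]\subseteq Z\}$; every $g\in W$ has $hgh^{-1}\in Zg$ for all $h$, so $W\subseteq\mathrm{FC}(G)$ and hence $[W:\mathrm Z(G)]\le[\mathrm{FC}(G):\mathrm Z(G)]<\infty$. On the other hand, $\mathrm Z(G/Z,\omega_\gamma)\cap\pi(\mathrm Z(G))=\{e\}$: if $g\in\mathrm Z(G)$ with $\pi(g)\in\mathrm Z(G/Z,\omega_\gamma)$, then the section $c\colon G/Z\to G$ satisfies $c(\pi(g))\in gZ\subseteq\mathrm Z(G)$, so $c(\pi(g))$ is central and the $\omega_\gamma$-regularity criterion of \cref{pgr:IndCoc} collapses to $\widetilde\omega(\pi(g),\pi(h))=1$ for all $h$; since $\sigma$ is cohomologous to $\mathrm{Inf}(\omega)$ one has $\widetilde\omega(\pi(g),\pi(h))=\widetilde{\mathrm{Inf}(\omega)}(g,h)=\widetilde\sigma(g,h)$, so $g$ is $\sigma$-regular and therefore $g\in Z$, i.e.\ $\pi(g)=e$. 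Combining the two observations, $\mathrm Z(G/Z,\omega_\gamma)$ embeds into $\pi(W)/\pi(\mathrm Z(G))\cong W/\mathrm Z(G)$, a finite group.

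Now the induction. If $Z=\{e\}$, the preliminary fact gives the statement at once. If $Z\ne\{e\}$, then $\dual Z$ is finite and discrete, so \cref{prop:packer-raeburn} yields the finite direct sum $C^*_r(G,\sigma)\cong\bigoplus_{\gamma\in\dual Z}C^*_r(G/Z,\omega_\gamma)$. Each $G/Z$ is infinite and finitely generated nilpotent; $\mathrm Z(G/Z,\omega_\gamma)$ is finite by the core claim; its preimage $P_\gamma$ in $G$ satisfies $Z\subseteq P_\gamma\subseteq T(G)$; and $T(G/Z)=T(G)/Z$. Hence $[T(G/Z):\mathrm Z(G/Z,\omega_\gamma)]=[T(G):P_\gamma]\le[T(G):Z]$, with equality exactly when $\mathrm Z(G/Z,\omega_\gamma)=\{e\}$. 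So each summand is either simple and $\mathcal Z$-stable by the preliminary fact, or --- the induction parameter having strictly dropped --- a finite direct sum of unital simple separable nuclear $\mathcal Z$-stable algebras by the inductive hypothesis; in either case $C^*_r(G,\sigma)$ is a finite direct sum of such algebras.

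I expect the core claim --- that $\mathrm Z(G/Z,\omega_\gamma)$ stays finite, i.e.\ that dividing by the twisted center cannot create a new infinite twisted center --- to be the main obstacle. It is precisely the point where the finiteness of $\mathrm Z(G,\sigma)$, the behaviour of $\widetilde{\omega_\gamma}$ under \eqref{eq:cocycle-section}, and the nilpotent-group bound $[\mathrm{FC}(G):\mathrm Z(G)]<\infty$ must all be used together; the remaining steps are routine bookkeeping with \cref{prop:packer-raeburn} and \cref{thm:BeEnva22}.
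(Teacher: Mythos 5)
Your induction on $[T(G):\mathrm{Z}(G,\sigma)]$ and your ``core claim'' (that $\mathrm{Z}(G/Z,\omega_\gamma)$ stays finite) are both sound, but the proof breaks at the step you dismissed as routine: the ``preliminary fact'' that $\mathrm{Z}(H,\tau)=\{e\}$ forces $\mathrm{FC}(H,\tau)=\{e\}$ for an infinite finitely generated nilpotent group $H$. This is false once $H$ has torsion, because the set $\mathrm{FC}(H,\tau)$ of $\tau$-regular elements with finite conjugacy class is in general \emph{not} a subgroup, so your appeal to ``a finite normal subgroup of a nilpotent group meeting the center trivially is trivial'' has nothing to act on. Concretely, let $D_4=\langle r,s\mid r^4=s^2=e,\ srs=r^{-1}\rangle$ be the dihedral group of order $8$, let $\sigma_2$ be the nontrivial class in $H^2(D_4,\mathbb{T})\cong\Z/2\Z$ (so $\C^{\sigma_2}[D_4]\cong M_2(\C)\oplus M_2(\C)$, with $\sigma_2$-regular classes exactly $\{e\}$ and $\{r,r^3\}$; in particular the central element $r^2$ is \emph{not} $\sigma_2$-regular), let $\theta$ be an irrational-rotation cocycle on $\Z^2$, and set $H=\Z^2\times D_4$, $\tau=\theta\times\sigma_2$. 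Then $\mathrm{Z}(H,\tau)=\{e\}$: elements $(v,1)$ with $v\neq 0$ fail regularity against the $\Z^2$-direction, and $(v,r^2)$ fails it against $s$. Yet $(0,r)$ is a nontrivial $\tau$-regular element with conjugacy class $\{(0,r),(0,r^3)\}$, so $\mathrm{FC}(H,\tau)=\{(0,e),(0,r),(0,r^3)\}\neq\{e\}$ (and is visibly not closed under multiplication). Accordingly $C^*_r(H,\tau)\cong A_\theta\otimes(M_2(\C)\oplus M_2(\C))$ is \emph{not} simple, so your preliminary fact proves a false statement; with it fall both the base case of your induction and the terminal case $\mathrm{Z}(G/Z,\omega_\gamma)=\{e\}$.

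This is precisely the difficulty the paper's proof is built to avoid: it passes to a torsion-free finite-index subgroup $N\leq G$, where $\mathrm{FC}(N)=\mathrm{Z}(N)$ and hence $\mathrm{FC}(N,\sigma)=\mathrm{Z}(N,\sigma)$; it then uses \cref{prp:FinIndInc} to see that $\mathrm{Z}(N,\sigma)$ is finite, hence trivial by torsion-freeness, applies \cref{thm:BeEnva22} to get that $C^*_r(N,\sigma)$ is simple and $\mathcal{Z}$-stable, and finally invokes \cite[Lemma~4.14]{EckWu24} to pass back up to $G$ --- it is that last lemma, not a Packer--Raeburn decomposition over $\dual{\mathrm{Z}(G,\sigma)}$, that produces the finite direct sum. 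To repair your argument you would have to replace the preliminary fact by a correct treatment of the trivial-twisted-center case (essentially the torsion-free-subgroup argument above), at which point the induction on $[T(G):\mathrm{Z}(G,\sigma)]$ becomes superfluous.
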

\begin{proof}
    There exists a torsion-free, finitely generated, finite index subgroup $N$ of $G$. Indeed, by Theorem 6.6 of \cite{CleMajZym17} and the discussion afterwards, $G$ is a linear group, i.e.,\ a subgroup of $\mathrm{GL}(n,\mathbb{Z})$ for some $n$, and by Selberg's lemma \cite{Sel60}, finitely generated linear groups have a torsion-free subgroup of finite index. Furthermore, a finite index subgroup of a finitely generated group is necessarily finitely generated. It follows from \cref{prp:FinIndInc} that $\mathrm{Z}(G,\sigma)\cap N\subseteq \mathrm{Z}(N,\sigma )$ is a finite index inclusion. Since $\mathrm{Z}(G,\sigma )$ is finite, so is $\mathrm{Z}(N,\sigma)$. However, since $N$ is torsion-free, we must have that $\mathrm{Z}(N,\sigma )=\mathrm{FC}(N,\sigma)$ is trivial.

    Thus, \cref{thm:BeEnva22} implies that $C^*_r (N,\sigma)$ is unital simple separable nuclear and $\mathcal{Z}$-stable, because $N$ is infinite since $G$ is, and $N$ is nilpotent. Now, apply \cite[Lemma~4.14]{EckWu24} to deduce the desired result\footnote{Concretely, the proof of \cite[Lemma~4.14]{EckWu24} shows that $C^*_r (G,\sigma )$ is a finite direct sum of unital simple separable nuclear $\mathcal{Z}$-stable algebras whenever $C^*_r (N,\sigma)$ is itself unital simple separable nuclear $\mathcal{Z}$-stable.}.
\end{proof}

Inspired by the strategy in \cite{EcMcKee18}, the proof of \cref{prp:StrIrrCharNSCA} below will be by induction on the Hirsch length of the group. We refer again to \cite[Subsection~2.1.1]{EcMcKee18} for an introduction to this numerical invariant that takes values on $\N$. For us, the relevance of the Hirsch number $h(G)$ of a finitely generated nilpotent group comes from the following properties: $h(G)=0$ if and only if $G$ is finite; and $h(G)=h(N)+h(G/N)$ for any normal subgroup of $G$.

\begin{theorem}\label{prp:StrIrrCharNSCA}
    Let $G$ be an infinite discrete finitely generated nilpotent group, and let $\sigma$ be a $2$-cocycle on $G$. Then, the following conditions are equivalent:
    \begin{enumerate}
        \item $C^*_r (G,\sigma )$ is nowhere scattered;
        \item $C^*_r (G,\sigma )$ is pure;
        \item $C^*_r (G,\sigma )$ is $\mathcal{Z}$-stable;
        \item $\sigma$ is a non-rational $2$-cocycle;
        \item $G$ is infinite and $[G^{h-1}_\lambda : \mathrm{Z}(G^{h-1}_\lambda,\omega^{h-1}_\lambda)]=\infty$ for all $\lambda \in \Lambda^{h-1}$, where $h$ is the Hirsch length of $G$ and $(G^i_\lambda ,\omega^i_\lambda)$ is the $2$-cocycle decomposition of $(G,\sigma)$.
    \end{enumerate}
\end{theorem}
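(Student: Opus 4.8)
The plan is to establish the cycle of implications (iii) $\Rightarrow$ (ii) $\Rightarrow$ (i) $\Rightarrow$ (iv) $\Leftrightarrow$ (v) $\Rightarrow$ (iii), using the results already assembled. The implications (iii) $\Rightarrow$ (ii) $\Rightarrow$ (i) are standard: $\mathcal{Z}$-stability implies pureness, and pureness implies nowhere scatteredness (both recalled in \ref{pgr:Pure} and \ref{pgr:NS}). The implication (i) $\Rightarrow$ (iv) is exactly \cref{lma:NSCAimpInfInd} once we note that finitely generated nilpotent groups are amenable. The equivalence (iv) $\Leftrightarrow$ (v) is bookkeeping: by \cref{dfn:IrrCocDec} non-rationality asks for $[G^i_\lambda : \mathrm{Z}(G^i_\lambda, \omega^i_\lambda)] = \infty$ at every level $i$, but the Hirsch length drops by at least one at each step of the cocycle decomposition (quotienting by $\mathrm{Z}(G^{i-1}_{\lambda'}, \omega^{i-1}_{\lambda'})$, which is infinite precisely when the index is infinite), so after $h$ steps the group is finite and the condition at level $h-1$ is the last nontrivial one; one also has to observe that (v) forces all earlier indices to be infinite, since a finite index at some level $i < h-1$ would make $G^{i+1}_\lambda$ finite and then the level-$(h-1)$ condition vacuous or false.

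The substance is (iv) $\Rightarrow$ (iii), proved by induction on the Hirsch length $h = h(G)$. The base case $h = 0$ is vacuous since then $G$ is finite and non-rationality fails (indeed $[G : \mathrm{Z}(G,\sigma)] < \infty$). For the inductive step, set $N = \mathrm{Z}(G,\sigma)$, the twisted center. If $N$ is finite, then since $G$ is infinite (which follows from non-rationality applied at level $0$), \cref{prp:TwistCentFin} gives that $C^*_r(G,\sigma)$ is a finite direct sum of unital simple separable nuclear $\mathcal{Z}$-stable algebras, hence $\mathcal{Z}$-stable, and we are done. If $N$ is infinite, then by \ref{pgr:IndCoc} we may replace $\sigma$ by a cohomologous cocycle that is inflated from a cocycle $\omega$ on $G/N$, and \cref{prop:packer-raeburn} presents $C^*_r(G,\sigma)$ as a $C(\dual{N})$-algebra with fibers $C^*_r(G/N, \omega_\gamma)$ for $\gamma \in \dual{N}$. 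Each such fiber is $C^*_r(G^1_\lambda, \omega^1_\lambda)$ for the corresponding $\lambda = (\lambda_0, \gamma) \in \Lambda^1$; its cocycle decomposition is the "tail" of that of $(G,\sigma)$, so it inherits non-rationality, and since $h(G/N) \leq h(G) - 1$ (as $N$ is an infinite normal subgroup, $h(N) \geq 1$), the inductive hypothesis applies to give that every fiber is $\mathcal{Z}$-stable.

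The main obstacle is the last step: passing from "every fiber of the $C(X)$-algebra is $\mathcal{Z}$-stable" to "$C^*_r(G,\sigma)$ is $\mathcal{Z}$-stable". Unlike nowhere scatteredness (\cref{prp:BundlesNSCa}), $\mathcal{Z}$-stability does not in general pass from fibers of a $C(X)$-algebra to the total algebra without extra hypotheses — but here $C^*_r(G,\sigma)$ has finite nuclear dimension (in fact finite decomposition rank) by \cref{thm:BeEnva22}, and one expects to combine this with the structure of the fibers. The cleanest route is to avoid fiberwise $\mathcal{Z}$-stability as the inductive invariant and instead carry nowhere scatteredness through the induction: by \cref{prp:BundlesNSCa}, $C^*_r(G,\sigma)$ is nowhere scattered iff every fiber $C^*_r(G/N,\omega_\gamma)$ is, and by the inductive hypothesis (now stating the full equivalence (i)$\Leftrightarrow$(iii) at smaller Hirsch length) non-rationality of the fiber cocycles — which we have — gives that each fiber is nowhere scattered; hence $C^*_r(G,\sigma)$ is nowhere scattered, and then \cref{prp:PureFinDR} together with \cref{thm:BeEnva22} (finite decomposition rank) upgrades this to pureness, and finally $\mathcal{Z}$-stability follows since for a separable \ca{} of finite nuclear dimension pureness implies $\mathcal{Z}$-stability. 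So the right formulation of the induction is to prove (i) $\Leftrightarrow$ (iv) by induction and then invoke \cref{prp:PureFinDR}, \cref{thm:BeEnva22} and the Toms–Winter-type equivalence in the finite-nuclear-dimension setting to close the loop (i) $\Rightarrow$ (iii).
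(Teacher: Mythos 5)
Your overall architecture agrees with the paper's: the soft implications (iii)$\Rightarrow$(ii)$\Rightarrow$(i), then (i)$\Rightarrow$(iv) via \cref{lma:NSCAimpInfInd}, and an induction on the Hirsch length that splits according to whether $\mathrm{Z}(G,\sigma)$ is finite (where \cref{prp:TwistCentFin} applies) or infinite (where \cref{prop:packer-raeburn} applies). The gap is in your resolution of what you yourself flag as the main obstacle. Having reduced to a $C(\dual{\mathrm{Z}(G,\sigma)})$-algebra whose fibers are handled by induction, you switch the inductive invariant to nowhere scatteredness, upgrade to pureness via \cref{prp:PureFinDR}, and then assert that ``for a separable \ca{} of finite nuclear dimension pureness implies $\mathcal{Z}$-stability.'' No such theorem is available in the non-simple setting: Winter's result \cite{Win12Pure} requires simplicity, and the general non-simple statement is precisely the open problem recorded as \cref{qst:Intro}, which the present theorem is meant to settle for the class at hand. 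Your argument is therefore circular at its final step. The paper instead keeps $\mathcal{Z}$-stability itself as the inductive invariant and closes the induction with \cite[Theorem~4.6]{HirRorWinC0XAlg}: a $C(X)$-algebra over a compact, metrizable, finite-dimensional space all of whose fibers are $\mathcal{Z}$-stable is itself $\mathcal{Z}$-stable. The hypotheses on the base space hold here because $\mathrm{Z}(G,\sigma)$ is a finitely generated abelian group, so $\dual{\mathrm{Z}(G,\sigma)}$ is a compact metrizable finite-dimensional group. This is the ingredient your proof is missing, and without it the implication (iv)$\Rightarrow$(iii) is not established.

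A secondary issue: your bookkeeping for (iv)$\Leftrightarrow$(v) rests on the claim that $\mathrm{Z}(G^{i-1}_{\lambda'},\omega^{i-1}_{\lambda'})$ ``is infinite precisely when the index is infinite,'' which is false ($\Z^2$ with an irrational cocycle has trivial twisted center of infinite index), so the Hirsch length need not drop at each step and the decomposition need not terminate after $h$ steps. Your argument then only yields infinite index at levels $0,\dots,h-1$, not at all levels, so (v)$\Rightarrow$(iv) is not proved. The paper sidesteps this by proving (v)$\Rightarrow$(iii) directly --- the induction only ever consults levels $\leq h-1$ --- and recovering (v)$\Rightarrow$(iv) from the closed cycle of implications. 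Running your induction from hypothesis (v) rather than (iv) would repair this part.
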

\begin{proof}
    A pure \ca{} is always nowhere scattered, so (ii)~implies~(i)\footnote{In fact, \cref{prp:NilStCompIFFNSCa} already shows that (i)~and~(ii) are equivalent.}. It is well-known that any $\mathcal{Z}$-stable \ca{} is pure (see e.g. \cite[Proposition~5.2]{AntPerThiVil24Pure}). Thus, (iii)~implies~(ii). Further, (i)~implies~(iv) by \cref{lma:NSCAimpInfInd} and (iv)~trivially implies~(v).

    We will now prove by induction on the Hirsch number $h(G)$ of $G$ that (v)~implies~(iii), which will finish the proof. We note that, since $G$ is infinite, we have $h(G)\geq 1$. Further, for $h(G)=1$, one gets $1=h(G)=h(G/\mathrm{Z}(G,\sigma))+h(\mathrm{Z}(G,\sigma))$. Since we are assuming that $G/\mathrm{Z}(G,\sigma)$ is infinite, we must have $h(\mathrm{Z}(G,\sigma))=0$ i.e. $\mathrm{Z}(G,\sigma)$ is finite. \cref{prp:TwistCentFin} now shows that $C^*_r(G,\sigma)$ is $\mathcal{Z}$-stable.

    Thus, fix $n\in\N_{>1}$ and assume that the equivalence holds whenever the Hirsch number is strictly less than $n$. Take $G$ such that $h(G)=n$. As before, one has
    \[
        h(G)=h(\mathrm{Z}(G,\sigma ))+h(G/\mathrm{Z}(G,\sigma)).
    \]

    In particular, either $h(G/\mathrm{Z}(G,\sigma))<n$ or $h(\mathrm{Z}(G,\sigma))=0$. If $h(\mathrm{Z}(G,\sigma))=0$, it follows from \cref{prp:TwistCentFin} that $C^*_r(G,\sigma)$ is $\mathcal{Z}$-stable. Otherwise we can apply our induction hypothesis to deduce that all \ca{s} of the form $C^*_r (G/\mathrm{Z}(G,\sigma),\omega^{1}_\gamma)$ for $\gamma \in \dual{\mathrm{Z}(G,\sigma)}$ are $\mathcal{Z}$-stable.

    Now use \cref{prop:packer-raeburn} to write $C^*_r(G,\sigma)$ as the section algebra of a bundle over $\dual{\mathrm{Z}(G,\sigma)}$ where, by the induction hypothesis, all fibers are $\mathcal{Z}$-stable. Furthermore, note that $\dual{\mathrm{Z}(G,\sigma)}$ is metrizable and finite dimensional: Indeed, $\mathrm{Z}(G,\sigma )$ is abelian, countable and discrete (because $G$ is), and finitely generated (because $G$ is a finitely generated nilpotent group, hence Noetherian). Thus, the properties of $\dual{\mathrm{Z}(G,\sigma)}$ together with \cite[Theorem~4.6]{HirRorWinC0XAlg} allow us to deduce that $\mathcal{Z}$-stability of the fibers passes to the whole algebra $C^*_r(G,\sigma)$.

    This finishes the induction argument, and the proof.
\end{proof}

\begin{example}
    Consider the $2$-step free nilpotent group $G(3)$, defined setwise as $\Z^6$ where multiplication is given by
    \[
        \begin{split}
            &(r_1,r_2,r_3,r_{12},r_{13},r_{23})(s_1,s_2,s_3,s_{12},s_{13},s_{23}):=\\
        &(r_1+s_1,r_2+s_2,r_3+s_3,r_{12}+s_{12}+r_1 s_2,r_{13}+s_{13}+r_1 s_3,r_{23}+s_{23}+r_2 s_3).
        \end{split}
    \]

    In \cite[Example~2.7]{Om15} a formula is given for all $2$-cocycles of $G(3)$ up to similarity. Using it, one can produce examples of $2$-cocycles that are non-rational but which do not satisfy Kleppner's condition: Let $\theta$ be an irrational number and let $\sigma$ be the $2$-cocycle given by 
    \[
        \sigma (r,s)=e^{2\pi i\theta \left(s_{13}r_2+s_3(r_1 r_2-r_{12})+s_{13}r_1+\frac{1}{2}s_3 r_1 (r_1 -1) +r_3 (s_{13}+r_1 s_3)+\frac{1}{2}r_1 s_3 (s_3-1)\right)}
    .
    \]

    It is readily checked that central elements of $G(3)$ are of the form $(0,0,0,r_{12},r_{13},r_{23})$. Thus, if $r$ is central, one has
    \[
        \widetilde{\sigma} (r,s)=e^{2\pi i\theta(-s_1r_{13}-s_2r_{13}-s_3 (r_{12}+r_{13}))}.
    \]

    Note that $\mathrm{Z}(G(3),\sigma)$ consists of the elements of the form $(0,0,0,0,0,r_{23})$. In particular, $C^*_r(G(3),\sigma)$ is not simple. Further, direct computations show that the center of $G(3)/\mathrm{Z}(G(3),\sigma)$ coincides with $\mathrm{Z}(G(3))/\mathrm{Z}(G(3),\sigma)$. In particular, the section $c\colon G(3)/\mathrm{Z}(G(3),\sigma)\to G(3)$ given by 
    \[
        [r_1,r_2,r_3,r_{12},r_{13},r_{23}]\mapsto (r_1,r_2,r_3,r_{12},r_{13},0)
    \]
    is well-defined and sends central elements to central elements.

    Now, given any $\gamma\in\dual{\mathrm{Z}(G(3),\sigma)}$ and any $[s]$ central, we get
    \[
        \gamma (c([s])c([t])c([s])^{-1}c([t])^{-1})=1
    \]
    which implies that, for $\omega$ as in \cref{pgr:IndCoc}, $\mathrm{Z}(G(3)/\mathrm{Z}(G(3),\sigma),\omega)=\mathrm{Z}(G(3)/\mathrm{Z}(G(3),\sigma),\omega_\gamma)$ for all $\gamma$.

    Finally, for $[r]$ central, one has
    \[
        \widetilde{\omega} ([r],[s])=\widetilde{\sigma} (r,s)
    \]
    and so $[r]$ is $\omega$-regular if and only if $r$ is $\sigma$-regular, which happens if and only if $r_{12}=r_{13}=0$, that is, $[r]=0$.

    Note that $G(3)/\mathrm{Z}(G(3),\sigma)$ is a torsion-free finitely generated nilpotent group, so it follows that $\mathrm{FC}(G(3)/\mathrm{Z}(G(3),\sigma),\omega_\gamma)=\mathrm{Z}(G(3)/\mathrm{Z}(G(3),\sigma),\omega_\gamma)=\{ 0\}$.

    In other words, the $2$-cocycle decomposition stabilizes at the first step, and it follows from our computations that $\sigma$ is non-rational. Thus, $C^*_r(G(3),\sigma)$ is $\mathcal{Z}$-stable by \cref{prp:StrIrrCharNSCA}.
\end{example}

\begin{question}
    In light of \cref{prp:StrIrrCharNSCA}, many natural questions arise. For example, does the equivalence between~(i) and~(ii) hold when $G$ is not assumed to be finitely generated? For what other groups can one get a similar result? Natural candidates are finitely generated virtually polycyclic groups (which are known to have finite nuclear dimension by \cite{EckWu24}), and finitely generated groups where no non-trivial quotient is finite \cite{JusMon13}.
\end{question}

\subsection{Abelian groups}\label{subsec:AbGrp} We now specialize \cref{prp:StrIrrCharNSCA} to the case when $G$ is abelian, which allows us to obtain an easily checked group condition that characterizes $\mathcal{Z}$-stability; see \cref{prp:AbeCharStComp}. This result recovers the known characterization for higher-dimensional non-commutative tori obtained in \cite{EnsThiVil24} (\cref{prp:CorNCTorus}) and can be used to study $2$-cocycles on products (\cref{prp:Products}).

\begin{lemma}\label{prop:IrrAb}
Let $G$ be an abelian group and let $\sigma$ be a $2$-cocycle. Then, $\sigma$ is a non-rational $2$-cocycle if and only if $[G:\mathrm{Z}(G,\sigma)]=\infty$.
\end{lemma}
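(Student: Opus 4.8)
The plan is to prove both implications by induction on the length of the $2$-cocycle decomposition, exploiting the fact that in the abelian case the construction in \cref{dfn:CocyDecomp} is particularly tractable: at each step we quotient $G^i_\lambda$ by the subgroup $\mathrm{Z}(G^i_\lambda,\omega^i_\lambda)$, but since $G^i_\lambda$ is abelian its ordinary center is everything, so $G^{i+1}_\lambda = G^i_\lambda / \mathrm{Z}(G^i_\lambda,\omega^i_\lambda)$ is again abelian, and the twist $\omega^{i+1}_\lambda$ is obtained via \eqref{eq:cocycle-section}. The key observation I would isolate first is that for an abelian group $G$ with $2$-cocycle $\sigma$, the twisted center $\mathrm{Z}(G,\sigma)$ equals the radical $\{ g \in G \mid \widetilde{\sigma}(g,h)=1 \text{ for all } h \in G\}$ of the antisymmetrized bicharacter $\widetilde{\sigma}(g,h)=\sigma(g,h)\sigma(h,g)^{-1}$ (using \cite[Lemma~7.1]{Kl65}, as invoked elsewhere in the paper, to see $\widetilde{\sigma}$ is a bicharacter on the abelian group $G$). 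Hence $G/\mathrm{Z}(G,\sigma)$ carries a nondegenerate antisymmetric bicharacter.

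The forward direction ($\sigma$ non-rational $\Rightarrow$ $[G:\mathrm{Z}(G,\sigma)]=\infty$) is immediate: taking $i=0$ and $\lambda$ the unique point of $\Lambda^0$, non-rationality by definition gives $[G^0_\lambda : \mathrm{Z}(G^0_\lambda,\omega^0_\lambda)] = [G:\mathrm{Z}(G,\sigma)] = \infty$. For the converse, suppose $[G:\mathrm{Z}(G,\sigma)]=\infty$; I must show $[G^i_\lambda : \mathrm{Z}(G^i_\lambda,\omega^i_\lambda)]=\infty$ for all $i$ and all $\lambda \in \Lambda^i$. The crucial claim is: \emph{if $H$ is an abelian group with $2$-cocycle $\omega$ such that $[H : \mathrm{Z}(H,\omega)] = \infty$, then for the quotient $H' = H/\mathrm{Z}(H,\omega)$ and any $\gamma \in \dual{\mathrm{Z}(H,\omega)}$ one has $[H' : \mathrm{Z}(H', \omega_\gamma)] = \infty$.} Granting this claim, the induction goes through: $G^1_\lambda = G/\mathrm{Z}(G,\sigma)$ with $\omega^1_\lambda = \rho_{\lambda_0}$, which (up to the choice of $\rho$ cohomologous to the inflation, and noting $\mathrm{Z}(\cdot,\cdot)$ is a cohomology invariant) is exactly an $\omega_\gamma$-type twist, so $[G^1_\lambda:\mathrm{Z}(G^1_\lambda,\omega^1_\lambda)]=\infty$; then iterate.

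To prove the claim, I would argue as follows. Write $Z = \mathrm{Z}(H,\omega)$, so $\widetilde{\omega}$ descends to a nondegenerate antisymmetric bicharacter $\beta$ on $H' = H/Z$; its radical is trivial. Now $\omega_\gamma$ differs from (a cocycle cohomologous to) the relevant cocycle only by the factor $\gamma(c(g)c(h)c(gh)^{-1})$, and since $Z$ is central this correction is symmetric up to coboundary — more precisely, a direct computation shows $\widetilde{(\omega_\gamma)}$ on $H'$ agrees with $\beta$, because the correcting $2$-cochain $(g,h)\mapsto \gamma(c(g)c(h)c(gh)^{-1})$ is itself symmetric in the sense that it cancels in the antisymmetrization (this uses centrality of $Z$ and that $\gamma$ is a homomorphism). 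Therefore $\mathrm{Z}(H',\omega_\gamma) = \mathrm{rad}(\widetilde{(\omega_\gamma)}) = \mathrm{rad}(\beta) = \{e\}$, and so $[H':\mathrm{Z}(H',\omega_\gamma)] = |H'| = [H:Z] = \infty$. I expect \textbf{the main obstacle} to be exactly this computation that the antisymmetrization of $\omega_\gamma$ on the quotient is unchanged — one has to be careful that $\omega_\gamma$ is defined via an arbitrary section $c$ of $H \to H'$ and only well-defined up to cohomology, so the cleanest route is to show the correcting cochain antisymmetrizes trivially and invoke that $\widetilde{(\cdot)}$ and hence $\mathrm{Z}(\cdot,\cdot)$ depend only on the cohomology class (as already noted in \ref{pgr:IndCoc}). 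Once that identity is in hand the rest is bookkeeping on the decomposition and the Hirsch-length-free induction on decomposition depth.
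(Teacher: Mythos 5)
Your proposal is correct and follows essentially the same route as the paper: both arguments rest on the observation that, since $G$ is abelian, the commutator term $\gamma(c(g)c(h)c(g)^{-1}c(h)^{-1})$ vanishes, so $\omega_\gamma$-regularity coincides with $\omega$-regularity, which in turn lifts to $\sigma$-regularity; hence $\mathrm{Z}(G^1_\lambda,\omega_\gamma)$ is trivial and the decomposition stabilizes with all indices equal to $[G:\mathrm{Z}(G,\sigma)]=\infty$. The only difference is presentational: you package this as an induction on decomposition depth, whereas the paper simply notes that after the first step all twisted centers are trivial.
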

\begin{proof}
Retain the notation from \ref{pgr:IndCoc} with $N=Z(G,\sigma)$. It follows from the discussion in that paragraph that $g\in \mathrm{Z}(G/Z(G,\sigma))$ is $\omega_\gamma$-regular if and only if 
    \[
        \gamma (c(g)c(h)c(g)^{-1}c(h)^{-1})\widetilde{\omega} (g,h)=1
    \]
    for all $h$, where $c\colon G/Z(G,\sigma)\to G$ is a section.

Note that all commutators are trivial, which implies that $s\in \mathrm{Z}(G/Z(G,\sigma))$ is $\omega_\gamma$-regular if and only if it is $\omega$-regular. Let $g_0\in G$ be such that $\pi (g_0)=g$. Since $\sigma$ is cohomologous to ${\rm Inf}(\omega)$, it follows that $g$ is $\omega$-regular if and only if $g_0$ is $\sigma$-regular. In other words, $g=e$ and thus $\mathrm{Z}(G^1_\lambda,\omega_\gamma)=\{ e\}$ for each $\gamma$, where recall that $G^1_\lambda=G/Z(G,\sigma)$ for all $\lambda$. By construction of our decomposition, this implies $[G^{i-1}_\lambda : \mathrm{Z}(G^{i-1}_\lambda,\omega^{i-1}_\lambda)]=\infty$ for all $\lambda$ and $i>1$, as desired.
\end{proof}

\begin{theorem}\label{prp:AbeCharStComp}
    Let $G$ be a finitely generated abelian group with a $2$-cocycle $\sigma$. Then, the following are equivalent:
    \begin{itemize}
        \item[(i)] $C^*_r(G,\sigma)$ is nowhere scattered;
        \item[(ii)] $C^*_r(G,\sigma)$ is $\mathcal{Z}$-stable;
        \item[(iii)] $[G:\mathrm{Z}(G,\sigma)]=\infty$.
    \end{itemize}
\end{theorem}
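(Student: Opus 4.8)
The plan is to deduce \cref{prp:AbeCharStComp} directly from the two preceding results: \cref{prp:StrIrrCharNSCA}, which applies since a finitely generated abelian group is in particular a finitely generated nilpotent group, and \cref{prop:IrrAb}, which translates non-rationality of $\sigma$ into the concrete index condition $[G:\mathrm{Z}(G,\sigma)]=\infty$. There is essentially no new mathematical content; the work is purely in chaining the equivalences together correctly.

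\begin{proof}
    The implication (ii)~$\Rightarrow$~(i) is immediate, since every $\mathcal{Z}$-stable \ca{} is pure by \cite[Proposition~5.2]{AntPerThiVil24Pure}, and pure \ca{s} are nowhere scattered.

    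For (i)~$\Rightarrow$~(iii), apply \cref{prp:StrIrrCharNSCA} to the finitely generated nilpotent (indeed abelian) group $G$: nowhere scatteredness of $C^*_r(G,\sigma)$ implies that $\sigma$ is a non-rational $2$-cocycle. By \cref{prop:IrrAb}, this is equivalent to $[G:\mathrm{Z}(G,\sigma)]=\infty$.

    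For (iii)~$\Rightarrow$~(ii), assume $[G:\mathrm{Z}(G,\sigma)]=\infty$. By \cref{prop:IrrAb}, $\sigma$ is non-rational, and then \cref{prp:StrIrrCharNSCA} gives that $C^*_r(G,\sigma)$ is $\mathcal{Z}$-stable.
\end{proof}

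Since the ingredients are already in place, no step poses a genuine obstacle; the only point requiring minor care is that \cref{prop:IrrAb} is stated for arbitrary abelian groups while \cref{prp:StrIrrCharNSCA} requires finite generation, so one must invoke each result in its correct regime — but a finitely generated abelian group satisfies both hypotheses, so the two combine without friction.
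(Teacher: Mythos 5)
Your proof is correct and follows exactly the paper's route: the paper likewise deduces the theorem by combining \cref{prop:IrrAb} (translating condition (iii) into non-rationality of $\sigma$) with the equivalences of \cref{prp:StrIrrCharNSCA}. You merely spell out the chain of implications more explicitly than the paper's two-line proof.
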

\begin{proof}
    (iii)~is equivalent to non-rationality of $\sigma$ by \cref{prop:IrrAb}. Thus, the result follows from \cref{prp:StrIrrCharNSCA}.
\end{proof}

\begin{remark}\label{prop:field-ns}
    Using a proof similar to that of \cref{prp:StrIrrCharNSCA} and the fact that \cref{prp:BundlesNSCa} has no assumptions on the base space, one sees that the equivalence (i)$\iff$(iii) holds for any abelian group (finitely generated or not).
\end{remark}

As stated, our result can be used to reprove the characterization of when a higher-dimensional non-commutative torus is $\mathcal{Z}$-stable. Recall that, given a skew-symmetric, real $n \times n$ matrix $\theta = (\theta_{i,j})_{i,j=1}^n$, the associated noncommutative torus $A_{\theta}$ is the \ca{} universally generated by unitary elements $u_1, \ldots, u_n$ satisfying the relations
\[ u_j u_i = e^{2\pi i \theta_{i,j}} u_i u_j , \quad 1 \leq i,j \leq n. \]
Furthermore, $A_\theta$ is called \emph{non-rational} if at least one of the entries of $\theta$ is irrational.

\begin{corollary}\label{prp:CorNCTorus}
    A noncommutative torus is $\mathcal{Z}$-stable if and only if it is non-rational.
\end{corollary}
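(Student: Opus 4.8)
The plan is to deduce Corollary~\ref{prp:CorNCTorus} directly from \cref{prp:AbeCharStComp} applied to $G = \Z^n$, after translating the hypothesis into the language of that theorem. A noncommutative torus $A_\theta$ of dimension $n$ is by definition $C^*_r(\Z^n, \sigma)$ for a $2$-cocycle $\sigma$ on $\Z^n$, so \cref{prp:AbeCharStComp} already gives that $\mathcal{Z}$-stability is equivalent to $[\Z^n : \mathrm{Z}(\Z^n, \sigma)] = \infty$. Since $\Z^n$ is abelian, the twisted center $\mathrm{Z}(\Z^n,\sigma)$ coincides with the set of $\sigma$-regular elements, i.e. those $g$ with $\widetilde{\sigma}(g,h) = \sigma(g,h)\sigma(h,g)^{-1} = 1$ for all $h$. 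The remaining task is therefore purely bookkeeping: check that ``non-rational'' as defined in \cref{dfn:IrrCocDec} for this particular $\sigma$ is the same as the classical non-rationality condition for noncommutative tori, and that it matches the index condition.

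First I would recall (or cite \cite{EnsThiVil24}) the standard description: every $2$-cocycle on $\Z^n$ is cohomologous to one of the form $\sigma_\Theta(g,h) = \exp(\pi i \langle g, \Theta h\rangle)$ for a real skew-symmetric $n\times n$ matrix $\Theta$, and the noncommutative torus $A_\Theta$ is called \emph{non-rational} precisely when the subgroup $\{g \in \Z^n : \Theta g \in \Z^n\}$ has infinite index in $\Z^n$ (equivalently, $A_\Theta$ is not stably isomorphic to a continuous-trace algebra, equivalently it has no finite-dimensional representations). Then I would observe that $g$ is $\sigma_\Theta$-regular exactly when $\exp(2\pi i \langle g, \Theta h\rangle) = 1$ for all $h \in \Z^n$, which by taking $h$ to range over a basis says $\Theta g \in \Z^n$. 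Hence $\mathrm{Z}(\Z^n, \sigma_\Theta) = \{g : \Theta g \in \Z^n\}$, so $[\Z^n : \mathrm{Z}(\Z^n,\sigma_\Theta)] = \infty$ is literally the classical non-rationality condition.

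To close the loop with \cref{dfn:IrrCocDec}, I would invoke \cref{prop:IrrAb}: for the abelian group $\Z^n$, being non-rational in the sense of the $2$-cocycle decomposition is equivalent to $[\Z^n : \mathrm{Z}(\Z^n,\sigma)] = \infty$, so the paper's internal notion of non-rationality and the classical one for tori agree. Putting these together: $A_\Theta = C^*_r(\Z^n,\sigma_\Theta)$ is $\mathcal{Z}$-stable $\iff$ $[\Z^n : \mathrm{Z}(\Z^n,\sigma_\Theta)] = \infty$ (by \cref{prp:AbeCharStComp}) $\iff$ $A_\Theta$ is non-rational in the classical sense (by the computation above, also matching \cref{prop:IrrAb}). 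I do not expect any serious obstacle here; the only mild care needed is to pin down which of the several equivalent classical definitions of ``non-rational noncommutative torus'' one wants to use as the reference point, and to make sure the normalization of the cocycle (the factor of $\pi$ versus $2\pi$) is handled consistently so that $\sigma$-regularity translates to $\Theta g \in \Z^n$ rather than $\Theta g \in \tfrac12\Z^n$ or similar. Since the statement is essentially a restatement of the abelian theorem specialized to $\Z^n$, the proof will be a short paragraph citing \cref{prp:AbeCharStComp} and \cref{prop:IrrAb} together with \cite{EnsThiVil24} for the terminology.
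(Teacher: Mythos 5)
Your proposal is correct and follows essentially the same route as the paper: both reduce the corollary to \cref{prp:AbeCharStComp} by identifying the classical non-rationality condition $\{z \in \Z^n : \Theta z \in \Z^n\}$ having infinite index (via \cite[Proposition~2.1]{EnsThiVil24}) with the condition $[\Z^n : \mathrm{Z}(\Z^n,\sigma)] = \infty$. Your explicit computation of the twisted center in terms of $\Theta$ and the cross-check against \cref{prop:IrrAb} are just slightly more detailed versions of what the paper leaves implicit.
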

\begin{proof}
    We have that $A_\theta$ is isomorphic to the twisted group \ca{} $C_r^*(\Z^n,\sigma)$ where
    \[ \sigma(k,l) = e^{-\pi i \sum_{1 \leq r < s \leq n} k_r \theta_{r,s} l_s} , \quad k,l \in \Z^n . \]
    (Here, the unitary $u_j$ corresponds to $\lambda_\sigma(e_j)$ where $e_j$ denotes the $j$th basis vector of $\Z^n$.) Now $\mathrm{Z}(\Z^n,\sigma)$ has finite index in $\Z^n$ if and only if for each $1 \leq i \leq n$ there is some $m_i > 0$ such that $m_i e_i \in \mathrm{Z}(\Z^n,\sigma)$. Since $\widetilde{\sigma}(e_i,e_j) = e^{-2\pi i \theta_{i,j}}$, this means that $\mathrm{Z}(\Z^n,\sigma)$ has finite index if and only if each number $\theta_{i,j}$ is rational. Hence the result follows from \cref{prp:AbeCharStComp}.
\end{proof}

Another application of \cref{prp:AbeCharStComp} is the study of $2$-cocycles in products of abelian groups. As seen in \cref{prp:PermPropCGSigm}, $C_r^*(G_1\times G_2, \sigma_1\times \sigma_2)$ is nowhere scattered whenever $C_r^*(G_1, \sigma_1)$ is and $G_1$ or $G_2$ is exact. However, not all $2$-cocycles on $G_1\times G_2$ are of the form $\sigma_1\times\sigma_2$: Following the notation of \cite[Section~3]{Oml14Primeness}, every $2$-cocycle on $G_1\times G_2$ is cohomologous to a $2$-cocycle of the form
\begin{equation}\label{eq:SigmaProd}
    \sigma ((g_1,g_2),(h_1,h_2))=\sigma_1(g_1,h_1)\sigma_2(g_2,h_2)f(h_1,g_2),
\end{equation}
where $\sigma_1(g_1,h_1):=\sigma ((g_1,e),(h_1,e))$, $\sigma_2(g_2,h_2):=\sigma ((e,g_2),(e,h_2))$ and $f\colon G_1\times G_2\to\mathbb{T}$ is a bihomomorphism. 

\begin{proposition}\label{prp:Products}
    For $i=1,2$, let $G_i$ be a discrete abelian group and $\sigma_i$ be a $2$-cocycle on $G_i$. Let $f$ be a bihomomorphism on $G_1\times G_2$, and set $\sigma$ as in (\ref{eq:SigmaProd}). If for every $(g_1,g_2)\in \mathrm{Z}(G_1\times G_2,\sigma)$ one has $f(h_1,g_2)=1$ for each $h_1\in G_1$, then
    \[
        C^*_r(G_1,\sigma_1)
        \text{ is $\mathcal{Z}$-stable }
        \implies
        C^*_r(G_1\times G_2,\sigma)\text{ is $\mathcal{Z}$-stable.}
    \]

    Conversely, if $f(g_1,h_2)=f(h_1,g_2)=1$ for every $(g_1,g_2)\in \mathrm{Z}(G_1,\sigma_1)\times \mathrm{Z}(G_2,\sigma_2)$ and every $(h_1,h_2)\in G_1\times G_2$, then
    \[
        C^*_r(G_1\times G_2,\sigma)\text{ is $\mathcal{Z}$-stable }
        \implies 
        C^*_r(G_1,\sigma_1)\text{ or }C^*_r(G_2,\sigma_2)
        \text{ is $\mathcal{Z}$-stable.}
    \]
\end{proposition}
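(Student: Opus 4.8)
The plan is to reduce both implications to statements about the indices of twisted centers via \cref{prp:AbeCharStComp}, and then to relate $\mathrm{Z}(G_1\times G_2,\sigma)$ to $\mathrm{Z}(G_1,\sigma_1)$ and $\mathrm{Z}(G_2,\sigma_2)$ using the explicit form \eqref{eq:SigmaProd} of $\sigma$. The first step is to compute the antisymmetrization: for an abelian group $H$ and a $2$-cocycle $\tau$ on it, write $\widetilde{\tau}(x,y)=\tau(x,y)\tau(y,x)^{-1}$, so that $g\in\mathrm{Z}(H,\tau)$ precisely when $\widetilde{\tau}(g,h)=1$ for all $h\in H$. A direct computation from \eqref{eq:SigmaProd} gives
\[
\widetilde{\sigma}\big((g_1,g_2),(h_1,h_2)\big)=\widetilde{\sigma_1}(g_1,h_1)\,\widetilde{\sigma_2}(g_2,h_2)\,f(h_1,g_2)\,f(g_1,h_2)^{-1},
\]
and specializing first $h_2=e$ and then $h_1=e$ yields the characterization that $(g_1,g_2)\in\mathrm{Z}(G_1\times G_2,\sigma)$ if and only if $\widetilde{\sigma_1}(g_1,h_1)=f(h_1,g_2)^{-1}$ for all $h_1\in G_1$ and $\widetilde{\sigma_2}(g_2,h_2)=f(g_1,h_2)$ for all $h_2\in G_2$.

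For the forward implication I would use the hypothesis (that $f(h_1,g_2)=1$ for all $h_1$ whenever $(g_1,g_2)\in\mathrm{Z}(G_1\times G_2,\sigma)$) together with the above characterization to conclude that $\widetilde{\sigma_1}(g_1,\,\cdot\,)\equiv 1$, i.e.\ $g_1\in\mathrm{Z}(G_1,\sigma_1)$, for every such pair; thus $\mathrm{Z}(G_1\times G_2,\sigma)\subseteq\mathrm{Z}(G_1,\sigma_1)\times G_2$, and hence
\[
[G_1\times G_2:\mathrm{Z}(G_1\times G_2,\sigma)]\geq[G_1\times G_2:\mathrm{Z}(G_1,\sigma_1)\times G_2]=[G_1:\mathrm{Z}(G_1,\sigma_1)].
\]
By \cref{prp:AbeCharStComp}, $\mathcal{Z}$-stability of $C^*_r(G_1,\sigma_1)$ makes the right-hand side infinite, so the left-hand side is infinite and \cref{prp:AbeCharStComp} returns $\mathcal{Z}$-stability of $C^*_r(G_1\times G_2,\sigma)$.

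For the converse I would instead use the hypothesis (that $f$ is trivial on $\mathrm{Z}(G_1,\sigma_1)\times G_2$ and on $G_1\times\mathrm{Z}(G_2,\sigma_2)$) together with the $\sigma_i$-regularity of the elements of $\mathrm{Z}(G_i,\sigma_i)$ to see directly from the displayed formula that $\mathrm{Z}(G_1,\sigma_1)\times\mathrm{Z}(G_2,\sigma_2)\subseteq\mathrm{Z}(G_1\times G_2,\sigma)$, whence
\[
[G_1\times G_2:\mathrm{Z}(G_1\times G_2,\sigma)]\leq[G_1:\mathrm{Z}(G_1,\sigma_1)]\cdot[G_2:\mathrm{Z}(G_2,\sigma_2)].
\]
If $C^*_r(G_1\times G_2,\sigma)$ is $\mathcal{Z}$-stable, \cref{prp:AbeCharStComp} makes the left-hand side infinite, so at least one factor on the right is infinite, and a final appeal to \cref{prp:AbeCharStComp} yields $\mathcal{Z}$-stability of $C^*_r(G_1,\sigma_1)$ or of $C^*_r(G_2,\sigma_2)$. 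The only genuine content is the antisymmetrization identity and the bookkeeping with indices of nested subgroups; I do not anticipate a real obstacle, the one point requiring care being that \cref{prp:AbeCharStComp} is invoked only for finitely generated abelian groups (namely $G_1$, $G_2$ and $G_1\times G_2$).
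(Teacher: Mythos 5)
Your argument is correct and follows essentially the same route as the paper: establish the inclusions $\mathrm{Z}(G_1\times G_2,\sigma)\subseteq \mathrm{Z}(G_1,\sigma_1)\times G_2$ and $\mathrm{Z}(G_1,\sigma_1)\times \mathrm{Z}(G_2,\sigma_2)\subseteq \mathrm{Z}(G_1\times G_2,\sigma)$, deduce the two index inequalities, and conclude via \cref{prp:AbeCharStComp}. The only difference is that the paper obtains the inclusions by citing \cite[Lemma~3.3]{Oml14Primeness} whereas you verify them directly from the antisymmetrization of \eqref{eq:SigmaProd}; your computation is correct, and the finite-generation caveat you flag is present in the paper's proof as well.
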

\begin{proof}
    The first condition gives, by \cite[Lemma~3.3]{Oml14Primeness}, that $\pi_1 (\mathrm{Z}(G_1\times G_2,\sigma))\subseteq \mathrm{Z}(G_1,\sigma_1)$, while the second condition gives (also by the same lemma) that $\mathrm{Z}(G_1,\sigma_1)\times \mathrm{Z}(G_2,\sigma_2)\subseteq \mathrm{Z}(G_1\times G_2,\sigma)$.

    Thus, one has
    \[
        \begin{split}
            [G_1\times G_2:\mathrm{Z}(G_1\times G_2,\sigma)]&\geq [G_1:\mathrm{Z}(G_1,\sigma_1)]\quad\quad\quad\quad\quad\text{ and }\\
            [G_1:\mathrm{Z}(G_1,\sigma_1)][G_2:\mathrm{Z}(G_2,\sigma_2)]&\geq [G_1\times G_2:\mathrm{Z}(G_1\times G_2,\sigma)]
        \end{split}
    \]
    respectively.
    
    The result now follows from \cref{prp:AbeCharStComp}.
\end{proof}

\subsection{\texorpdfstring{$2$-step nilpotent groups and generalized discrete Heisenberg groups}{2-step nilpotent groups and generalized discrete Heisenberg groups}}\label{subsec:GenHeis} In this subsection, we give a sufficient condition for nowhere scatteredness of twisted group \ca{s} of $2$-step nilpotent groups, which we then use to obtain a sufficient criterion for $\mathcal{Z}$-stability in the finitely generated case; see \cref{prp:CharNSCAforSome2step}. This condition is easier to check than that of \cref{prp:StrIrrCharNSCA}, and we use it to do an in-depth study of generalized discrete Heisenberg groups (\cref{pgr:GenDisHeis}).

For a normal subgroup $N$ of $G$ and a $2$-cocycle $\sigma$ on $G$, we denote by ${\rm Res}\,\sigma$ the \emph{restriction} of $\sigma$ to $N$. 

Now let $G$ be a discrete $2$-step nilpotent group, let $\sigma$ be a $2$-cocycle on $G$, and let $S$ be a central subgroup of $G$. Following \cite[Section~2]{LeePac95Twis2Step}, denote by $\varphi_S (\sigma)\colon G\to\dual{S}$ the map given by
\[
    \varphi_S (\sigma)(g)(d):=\sigma (d,g)\sigma (g,d)^{-1}
    =\widetilde{\sigma}(d,g).
\]
As observed on p.\ 96 of \cite{LeePac95Twis2Step}, $\varphi_S$ is a group homomorphism.

We will denote by $C(G)$ the commutator subgroup of $G$, which is central because $G$ is $2$-step nilpotent.

Recall that the Hirsch number of a finitely generated $2$-step nilpotent group is always greater than or equal to $3$. The contribution of \cref{prp:CharNSCAforSome2step} is that it allows us to restrict to the study of subgroups of $G/\mathrm{Z}(G,\sigma)$, instead of going to the iterated quotients that appear in \cref{prp:StrIrrCharNSCA}.

\begin{theorem}\label{prp:CharNSCAforSome2step}
    Let $G$ be a discrete $2$-step nilpotent group and let $\sigma$ be a $2$-cocycle on $G$. Let $\omega$ be a $2$-cocycle on $G / \mathrm{Z}(G,\sigma)$ such that $\sigma$ is cohomologous ${\rm Inf}(\omega )$. Assume that there exists a subgroup $D\subseteq \mathrm{Z}(G)/\mathrm{Z}(G,\sigma )$ such that
    \begin{itemize}
        \item[(i)] $C(G/\mathrm{Z}(G,\sigma))\subseteq D$;
        \item[(ii)] $\omega$ is trivial on $D\times D$;
        \item[(iii)] the map $\varphi_{D}(\omega )\colon G/\mathrm{Z}(G,\sigma)\to \dual{D}$ is surjective.
    \end{itemize}

    Then, $C_r^*(G,\sigma)$ is nowhere scattered if and only if
    \[
        [M : \mathrm{Z}(M ,{\rm Res}(\omega_\gamma))]=\infty
    \]
    for all $\gamma \in \dual{\mathrm{Z}(G,\sigma)}$, where $M = \ker (\varphi_{D}(\omega))$.

    Further, if $G$ is finitely generated, this condition holds if and only if $C^*_r (G,\sigma)$ is $\mathcal{Z}$-stable.
\end{theorem}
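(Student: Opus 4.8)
The strategy is to reduce the study of $C^*_r(G,\sigma)$ to that of a $C(\dual{\mathrm{Z}(G,\sigma)})$-algebra via \cref{prop:packer-raeburn}, then identify the fibers concretely using the extra hypotheses (i)--(iii), and finally invoke \cref{prp:BundlesNSCa} (for the nowhere scattered equivalence) together with \cref{prp:StrIrrCharNSCA} and a $C(X)$-algebra gluing result (for the $\mathcal{Z}$-stability equivalence). First I would apply \cref{prop:packer-raeburn} with $N = \mathrm{Z}(G,\sigma)$: since its elements are $\sigma$-regular by definition, we get that $C^*_r(G,\sigma)$ is a $C(\dual{\mathrm{Z}(G,\sigma)})$-algebra with fibers $C^*_r(G/\mathrm{Z}(G,\sigma),\omega_\gamma)$ for $\gamma \in \dual{\mathrm{Z}(G,\sigma)}$. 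By \cref{prp:BundlesNSCa}, $C^*_r(G,\sigma)$ is nowhere scattered if and only if each $C^*_r(G/\mathrm{Z}(G,\sigma),\omega_\gamma)$ is.

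The heart of the argument is then to show that $C^*_r(G/\mathrm{Z}(G,\sigma),\omega_\gamma)$ is nowhere scattered precisely when $[M : \mathrm{Z}(M,{\rm Res}(\omega_\gamma))] = \infty$, where $M = \ker(\varphi_D(\omega))$. Here I would use the hypotheses on $D$. Write $\overline{G} = G/\mathrm{Z}(G,\sigma)$. By (ii), $\omega_\gamma$ is trivial on $D \times D$ (up to cohomology, using that $D$ is central and the formula \eqref{eq:cocycle-section} only modifies $\omega$ by a bicharacter-type term on the central subgroup), so $D$ consists of $\omega_\gamma$-regular elements and we may apply \cref{prop:packer-raeburn} a second time with the central subgroup $D \subseteq \overline{G}$: this exhibits $C^*_r(\overline{G},\omega_\gamma)$ as a $C(\dual{D})$-algebra whose fibers are $C^*_r(\overline{G}/D, (\omega_\gamma)_\chi)$ for $\chi \in \dual{D}$. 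Condition (iii), surjectivity of $\varphi_D(\omega)$, is what forces the twisted center of these fibers to be trivial: it guarantees that for every nonzero character on $D$ there is a group element pairing nontrivially, so that no nonzero element of $D$ survives as $\omega_\gamma$-regular in the quotient, and moreover $\overline{G}/D \cong M$ as groups (since $\varphi_D(\omega)$ is a homomorphism with kernel $M$, by (i) and the $2$-step nilpotency which makes $\varphi_D$ a homomorphism). Thus each such fiber is (up to the relevant identification) $C^*_r(M, {\rm Res}(\omega_\gamma))$ with a $2$-cocycle whose twisted center behavior is governed by $\mathrm{Z}(M, {\rm Res}(\omega_\gamma))$; combined with the finiteness criteria for nowhere scatteredness of twisted abelian-by-finite pieces (as in \cref{prp:StrIrrCharNSCA} and \cref{lma:NSCAimpInfInd}), nowhere scatteredness of all these fibers is equivalent to $[M:\mathrm{Z}(M,{\rm Res}(\omega_\gamma))] = \infty$ for all $\gamma$. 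Applying \cref{prp:BundlesNSCa} twice then yields the first equivalence.

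For the final ``further'' clause, when $G$ is finitely generated, $\overline{G} = G/\mathrm{Z}(G,\sigma)$ and all the groups $M$ appearing are finitely generated nilpotent (quotients and subgroups of such), so $C^*_r(\overline{G},\omega_\gamma)$ has finite nuclear dimension by \cref{thm:BeEnva22}. The $2$-cocycle $\sigma$ is non-rational if and only if the iterated-quotient condition of \cref{dfn:IrrCocDec} holds, and one checks that this condition unwinds — using the above fiber analysis — to exactly $[M:\mathrm{Z}(M,{\rm Res}(\omega_\gamma))] = \infty$ for all $\gamma$ (the cocycle decomposition of $\omega_\gamma$ after one more step lands inside $M$ with the restricted cocycle, and the hypotheses (i)--(iii) ensure no further drop in Hirsch length is hidden). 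Then \cref{prp:StrIrrCharNSCA} gives that nowhere scatteredness of $C^*_r(G,\sigma)$ is equivalent to non-rationality of $\sigma$, which is equivalent to $\mathcal{Z}$-stability. Alternatively, and more directly, one passes $\mathcal{Z}$-stability of the fibers up through the $C(X)$-algebra structure over the metrizable finite-dimensional space $\dual{\mathrm{Z}(G,\sigma)}$ via \cite[Theorem~4.6]{HirRorWinC0XAlg}, exactly as in the proof of \cref{prp:StrIrrCharNSCA}. The main obstacle I anticipate is the bookkeeping in the second paragraph: verifying that the group-theoretic identifications ($\overline{G}/D \cong M$, and the matching of twisted centers of the double-fiber algebras with $\mathrm{Z}(M, {\rm Res}(\omega_\gamma))$) are compatible with the cohomology-class ambiguities in $\omega$ and $\omega_\gamma$, and that condition (iii) is being used in exactly the right place to kill the residual center — this is where a careful invocation of \cite[Section~2]{LeePac95Twis2Step} and the discussion in \ref{pgr:IndCoc} is needed.
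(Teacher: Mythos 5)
Your first step (Packer--Raeburn over $\dual{\mathrm{Z}(G,\sigma)}$ followed by \cref{prp:BundlesNSCa}) is fine, but the second step contains a genuine gap. You claim that hypothesis (ii) makes $D$ consist of $\omega_\gamma$-regular elements of $\overline{G}=G/\mathrm{Z}(G,\sigma)$, so that \cref{prop:packer-raeburn} can be applied a second time with central subgroup $D$. Triviality of the cocycle on $D\times D$ only says that elements of $D$ are regular relative to each other; regularity in $\overline{G}$ would require $\widetilde{\omega_\gamma}(d,h)=1$ for all $h\in\overline{G}$, and hypothesis (iii) --- surjectivity of $\varphi_D(\omega)$ --- forces exactly the opposite for every nontrivial $d\in D$. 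So the second fibering is not available. A central subgroup carrying a trivial cocycle while pairing non-degenerately with the rest of the group is precisely the situation the Lee--Packer structure theorem (\cite[Proposition~1.6, Theorem~1.8]{LeePac95Twis2Step}) is designed to handle: a Stone--von Neumann type argument converts the pairing between $D$ and $\overline{G}/M\cong\dual{D}$ into a tensor factor of compact operators, exhibiting $C^*_r(G,\sigma)$ as a $C_0(\dual{\mathrm{Z}(G,\sigma)})$-algebra with fibers $C^*_r(M/D,\tau_\gamma)\otimes\mathcal{K}(L^2(\overline{G}/M))$ where $M/D$ is \emph{abelian}. This is the decomposition the paper actually uses, and your outline has no substitute for it.

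Two further problems. The isomorphism $\overline{G}/D\cong M$ you assert does not follow from $\ker\varphi_D(\omega)=M$ (that gives $\overline{G}/M\cong\dual{D}$) and is false in general: in the paper's $H(1,d_2)$ example one has $M\cong\Z_{d_2}\oplus\Z^4$ while $\overline{G}/D\cong\Z^4$, so the two differ already by torsion. And even granting a reduction to fibers built from $M$, the statement requires identifying the twisted center of the fiber cocycle $\tau_\gamma$ on $M/D$ with $\mathrm{Z}(M,{\rm Res}(\omega_\gamma))/D$; this is the computational heart of the paper's proof (one uses commutativity of $M/D$ to get $\widetilde{\tau_\gamma}(g,h)=\widetilde{\omega_\gamma}(c(g),c(h))$ and then $M=\ker\varphi_D(\omega)$ to extend this multiplicatively over $D$), and your proposal only gestures at where such an identification would come from. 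The ``further'' clause via \cref{prp:StrIrrCharNSCA} is fine once the first equivalence is in place.
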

\begin{proof}    
    We know from \cite[Proposition~1.6,~Theorem~1.8]{LeePac95Twis2Step} that $C^*_r (G,\sigma )$ can be written as a $C_0 (\dual{\mathrm{Z}(G,\sigma)})$-algebra with fibers isomorphic to $C^*_r (M/D,\tau_\gamma)\otimes \mathcal{K}(L^2 (G/M))$, where $M$ is abelian and $\tau_\gamma$ is defined below. Thus, it follows from \cref{prp:BundlesNSCa} and \cref{prop:field-ns} that $C^*_r (G,\sigma )$ is nowhere scattered if and only if $[M/D:\mathrm{Z}(M/D,\tau_\gamma)]=\infty$ for every $\gamma$. We will now show that $[M/D:\mathrm{Z}(M/D,\tau_\gamma)]=[M : \mathrm{Z}(M ,{\rm Res}(\omega_\gamma))]$, which will finish the proof.

    Fix $\gamma \in \dual{\mathrm{Z}(G,\sigma)}$. Then, the $2$-cocycle $\tau_\gamma$ is given by
    \[
       \tau_\gamma (g,h) = \omega_\gamma \left(c(g),c(h)\right)\omega_\gamma \left(c(g) c(h) c(gh)^{-1},c(gh)\right)^{-1}
    \]
    where $c\colon M/D\to M$ is a section that maps the identity to the identity.

    First, write
    \[
        \widetilde{\tau_\gamma} (g,h) = 
        \widetilde{\omega_\gamma} \left(c(g) ,c(h)\right)\omega_\gamma \left(c(g) c(h) c (gh)^{-1},c(gh)\right)^{-1}
        \omega_\gamma \left(c(h)c(g)c(hg)^{-1},c(hg)\right)
    \]
    and, since $M$ (and $M/D$) is abelian, we have
    \[
        \begin{split}
        \widetilde{\tau_\gamma} (g,h) &= 
        \widetilde{\omega_\gamma} \left(c(g) ,c(h)\right)
        \omega_\gamma \left(c(g) c(h) c(gh)^{-1},c(gh)\right)^{-1}
        \omega_\gamma \left(c(g)c(h)c(gh)^{-1},c(gh)\right)\\
        &=
        \widetilde{\omega_\gamma} \left(c(g) ,c(h)\right).
        \end{split}
    \]

    Now let $g,h\in M/D$ and $d,d'\in D$. Set $m=c(h)d$ and $m'=c(g)d'$. Using that $M=\ker (\varphi_{D} (\omega_\gamma))$ one gets
    \[
        \begin{split}
        \widetilde{\omega_\gamma}\left(m,m'\right)&=
        \widetilde{\omega_\gamma}\left(c(h),m'\right)
        \widetilde{\omega_\gamma}\left(d,m'\right)=
        \widetilde{\omega_\gamma}\left(c(h),c(g)\right)
        \widetilde{\omega_\gamma}\left(c(h),d'\right)
        \widetilde{\omega_\gamma}\left(d,c(g)\right)
        \widetilde{\omega_\gamma}\left(d,d'\right)\\
        &= \widetilde{\tau_\gamma}\left( h,g\right),
        \end{split}
    \]
    which shows that 
    \[
         \mathrm{Z}(M/D,\tau_\gamma) =
         \{ [m]\in M/D \mid \widetilde{\omega_\gamma} (m,m')=1 \,\,\,\forall m'\in M \}=
         \mathrm{Z}(M,\omega_\gamma)/D.
    \]
    
    Thus, we have $[M/D:\mathrm{Z}(M/D,\tau_\gamma)]=[M : \mathrm{Z}(M ,{\rm Res}(\omega_\gamma))]$, as desired.
\end{proof}

\begin{pgr}[Generalized discrete Heisenberg groups]\label{pgr:GenDisHeis}
    Let $B\in M_m (\mathbb{Z})$. Following \cite[Section~2]{LeePac95Twis2Step}, define the \emph{generalized discrete Heisenberg groups} $H(B)$ as $H(B)=\mathbb{Z}\times\mathbb{Z}^{m}\times\mathbb{Z}^{m}$ with product
    \[
        (r,s,t)(r',s',t')=
        \left(
        r+r'+tB(s')^T,
        s+s',
        t+t'
        \right).
    \]

    When $B=D$ is a diagonal matrix with values $d_1,\ldots ,d_m$, we will either write $H(D)$ or $H(d_1,\ldots ,d_m)$. As explained in the beginning of \cite[Section~2]{LeePac95Twis2Step}, every group $H(B)$ is of the form $H(d_1,\ldots ,d_n)\times \mathbb{Z}^{2(m-n)}$ for $n$ the rank of $B$ and $d_1,\ldots ,d_n\in\mathbb{Z}$.

    By \cite[Corollary~3.5]{LeePac95Twis2Step}, no twisted group \ca{} of the form $C^*_r(H(d_1,\ldots ,d_n),\sigma)$ is simple for $n\geq 2$. We will show in \cref{prp:NSCAGenHeis} below that many are still $\mathcal{Z}$-stable.
\end{pgr}

\begin{example}
    Set $d_1=1$ and take $d_2,p$ coprime. Consider the $2$-cocycle $\sigma$ on $H=H(1,d_2)$ given by
    \[
        \sigma ((r,s,t),(r',s',t')) = 
        \left( e^{2\pi i\frac{p}{d_2}}\right)^{s_1'r+t_1 s_1'(s_1'-1)/2}
        \left( e^{2\pi i\theta}\right)^{s_2't_2}
    \]
    for some fixed irrational $\theta$ (that this is a $2$-cocycle follows from the standard parametrization of $2$-cocycles on generalized discrete Heisenberg groups in \cite[Proposition~2.13]{LeePac95Twis2Step}).

    One computes $\mathrm{Z}(H)=C(H)=\{(r,0,0)\mid r\in\mathbb{Z} \}$ and $\mathrm{Z}(H,\sigma)=\{ (r,0,0)\mid r\in d_2\mathbb{Z}\}$, where recall that $C(H)$ denotes the commutator subgroup of $H$. Thus, $H/\mathrm{Z}(H,\sigma)=\{ [r,s,t]\mid r\in\mathbb{Z}_{d_2}\}$ and $C(H/\mathrm{Z}(H,\sigma))=\{[r,0,0]\mid r\in\mathbb{Z}_{d_2}\}$.

    Set $\omega=\sigma\circ (c\times c)$ where $c$ is the cross section $[r,s,t]\mapsto (r \; \mathrm{mod} \; d_2,s,t)$, and let $D=C(H/\mathrm{Z}(H,\sigma))$. Note that (i)-(iii) in \cref{prp:CharNSCAforSome2step} are satisfied. Then,
    \[
        \varphi_{D}(\omega)([r,s,t])([r',0,0])=
        e^{-2\pi i\frac{p}{d_2}s_1 r'}
    \]
    which implies $M=\{ [r,s,t]\mid r\in\mathbb{Z}_{d_2},\, 
    s_1\in d_2\mathbb{Z}\}$ where $M=\ker (\varphi_{D}(\omega))$.

    Let $\gamma\in \dual{\mathrm{Z}(H,\sigma)}\cong\mathbb{T}$. A direct computation gives
    \[
        \widetilde{\omega_\gamma} ([r,s,t],[r',s',t'])
        =
        \gamma \left((t_1s_1'-t_1's_1+d_2(t_2s_2'-t_2's_2),0,0)\right)e^{2\pi i\theta (s_2' t_2 -s_2 t_2')}
    \]
    for $[r,s,t],[r',s',t']\in M$.

    Let $\xi$ be such that $\gamma ((r,0,0))=e^{2\pi i\xi}$. By studying the cases of $\xi$ being irrational and rational separately, one sees that (in both cases) $[M:\mathrm{Z}(M,{\rm Res}(\omega_\gamma))]=\infty$. By \cref{prp:CharNSCAforSome2step}, $C^*_r(H,\sigma)$ is $\mathcal{Z}$-stable.
\end{example}

\begin{theorem}\label{prp:NSCAGenHeis}
    Let $H=H(d_1,\ldots ,d_n)$ be a generalized discrete Heisenberg group, and let $\sigma$ be a $2$-cocycle on $H$. Then, the following are equivalent:
    \begin{enumerate}
        \item $C^*_r(H,\sigma)$ is nowhere scattered;
        \item $C^*_r(H,\sigma)$ is $\mathcal{Z}$-stable;
        \item $[M : \mathrm{Z}(M ,{\rm Res}(\omega_\gamma))]=\infty$ for all $\gamma \in \dual{\mathrm{Z}(H,\sigma)}$, where $M = \ker (\varphi_{D}(\omega))$ and $D=\mathrm{Z}(H)/\mathrm{Z}(H,\sigma)$.
    \end{enumerate}
\end{theorem}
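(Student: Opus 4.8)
The plan is to deduce \cref{prp:NSCAGenHeis} as a direct application of \cref{prp:CharNSCAforSome2step} by verifying that a generalized discrete Heisenberg group $H = H(d_1, \ldots, d_n)$, together with the subgroup $D = \mathrm{Z}(H)/\mathrm{Z}(H,\sigma)$, satisfies hypotheses (i)--(iii) of that theorem. First I would record the relevant structural facts about $H$: it is a finitely generated $2$-step nilpotent group, its center $\mathrm{Z}(H)$ and commutator subgroup $C(H)$ are both equal to the first coordinate copy of $\mathbb{Z}^{\text{?}}$ (for the diagonal form, $\mathrm{Z}(H) = C(H) = \mathbb{Z} \times \{0\} \times \{0\}$, while $C(H)$ and $\mathrm{Z}(H)$ may differ if some $d_i = 0$; in general $C(H) \subseteq \mathrm{Z}(H)$), and $\mathrm{Z}(H,\sigma)$ is a finite-index subgroup of $\mathrm{Z}(H)$, so that $D$ is a finite abelian group. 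Choosing a $2$-cocycle $\omega$ on $H/\mathrm{Z}(H,\sigma)$ with $\mathrm{Inf}(\omega)$ cohomologous to $\sigma$ (possible by \ref{pgr:IndCoc} since $\mathrm{Z}(H,\sigma)$ is central and consists of $\sigma$-regular elements), I then need to check the three conditions.

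For condition (i), since $C(H/\mathrm{Z}(H,\sigma))$ is the image of $C(H)$ under the quotient map and $C(H) \subseteq \mathrm{Z}(H)$, we get $C(H/\mathrm{Z}(H,\sigma)) \subseteq \mathrm{Z}(H)/\mathrm{Z}(H,\sigma) = D$. For condition (ii), I would argue that $D$, being contained in the image of the center $\mathrm{Z}(H)$, is an abelian subgroup on which $\sigma$ (equivalently $\omega$, after passing through the inflation) restricts to a bicharacter by \cite[Lemma~7.1]{Kl65}; but more is true---because $D = \mathrm{Z}(H)/\mathrm{Z}(H,\sigma)$ consists precisely of the classes killed by quotienting out the $\sigma$-regular part of the center, the antisymmetrization $\widetilde{\omega}$ vanishes on $D \times D$, and after adjusting $\omega$ within its cohomology class (which does not change any of the conclusions) one can arrange $\omega$ itself to be trivial on $D \times D$; here I may need to invoke the explicit cocycle computations from \cite[Section~2]{LeePac95Twis2Step}. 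For condition (iii), the surjectivity of $\varphi_D(\omega)\colon H/\mathrm{Z}(H,\sigma) \to \dual{D}$ is exactly the statement that no nonzero element of $D$ is $\omega$-regular, i.e.\ $\mathrm{Z}(H/\mathrm{Z}(H,\sigma), \omega) \cap D = \{e\}$; since by construction $\mathrm{Z}(H/\mathrm{Z}(H,\sigma),\omega)$ is trivial on the part coming from $\mathrm{Z}(H)$ (the $\sigma$-regular center was already quotiented out), and $D$ sits inside that part, this follows---though making it precise requires care with how $\varphi_D$ interacts with the group structure of the finite abelian group $D$ and Pontryagin duality.

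Once (i)--(iii) are verified, \cref{prp:CharNSCAforSome2step} immediately gives that $C^*_r(H,\sigma)$ is nowhere scattered if and only if $[M : \mathrm{Z}(M, \mathrm{Res}(\omega_\gamma))] = \infty$ for all $\gamma \in \dual{\mathrm{Z}(H,\sigma)}$ with $M = \ker(\varphi_D(\omega))$, which is the equivalence (i)$\iff$(iii) of \cref{prp:NSCAGenHeis}. The equivalence (i)$\iff$(ii) follows from the ``further'' clause of \cref{prp:CharNSCAforSome2step}, since $H$ is finitely generated; alternatively one can cite \cref{prp:StrIrrCharNSCA} directly, as $H$ is a finitely generated nilpotent group, so nowhere scatteredness, pureness, and $\mathcal{Z}$-stability all coincide.

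The main obstacle I anticipate is condition (iii): showing that $\varphi_D(\omega)$ is \emph{surjective} onto $\dual D$, not merely that its image is large. This amounts to a nondegeneracy statement for the antisymmetrized cocycle pairing between $D$ and $H/\mathrm{Z}(H,\sigma)$, and the natural way to see it is that $D$ consists exactly of non-$\sigma$-regular central classes, so for each nonzero $d \in D$ there is some $g$ with $\widetilde{\omega}(d, g) \neq 1$; promoting this pointwise non-regularity to genuine surjectivity of the induced map into the (finite) dual group is where the Heisenberg-specific structure---the explicit form of the commutator $tB(s')^T$ and the classification $H(B) \cong H(d_1,\ldots,d_n) \times \mathbb{Z}^{2(m-n)}$---does the real work, and I would lean on the detailed computations in \cite[Section~2,~Proposition~1.6,~Theorem~1.8]{LeePac95Twis2Step} rather than redoing them.
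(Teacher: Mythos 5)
Your proposal follows essentially the same route as the paper: the equivalence of (i) and (ii) is quoted from \cref{prp:StrIrrCharNSCA}, and the equivalence with (iii) is obtained by verifying hypotheses (i)--(iii) of \cref{prp:CharNSCAforSome2step} for $D=\mathrm{Z}(H)/\mathrm{Z}(H,\sigma)$, with the actual verification (in particular the surjectivity of $\varphi_D(\omega)$, which you correctly identify as the delicate point) deferred to the explicit cocycle computations of Lee--Packer, exactly as the paper does by citing the proof of \cite[Theorem~3.4]{LeePac95Twis2Step} and \cite[Proposition~2.13]{LeePac95Twis2Step}. The only caveat is your parenthetical claim that $D$ is automatically finite, which is not needed for the argument and should not be asserted without justification.
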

\begin{proof}
    (i)~and~(ii) are equivalent by \cref{prp:StrIrrCharNSCA}. To see that (iii)~and~(i) are equivalent, one needs to check that $D=\mathrm{Z}(H)/\mathrm{Z}(H,\sigma)$ satisfies the conditions in \cref{prp:CharNSCAforSome2step}.

    This is done in the proof of \cite[Theorem~3.4]{LeePac95Twis2Step}, where the authors use a precise description of all $2$-cocycles up to cohomology (see \cite[Proposition~2.13]{LeePac95Twis2Step}).
\end{proof}

Combining our previous results on abelian groups (\cref{prp:AbeCharStComp}) and generalized Heisenberg groups (\cref{prp:NSCAGenHeis}), we obtain the following consequence of \cref{prp:PermPropCGSigm}.

\begin{corollary}
    Let $B\in M_m (\mathbb{Z})$ and consider the group $H(B)\cong H(d_1,\ldots ,d_n)\times \mathbb{Z}^{2(m-n)}$. Let $\sigma$ be a $2$-cocycle on $H(B)$ that can be expressed as $\sigma_1\times\sigma_2$. Then, $C^*_r(H(B),\sigma)$ is $\mathcal{Z}$-stable if and only if $[\mathbb{Z}^{2(m-n)}\colon \mathrm{Z}(\mathbb{Z}^{2(m-n)},\sigma_2)]=\infty$ or $\sigma_1$ satisfies the condition in \cref{prp:NSCAGenHeis}.
\end{corollary}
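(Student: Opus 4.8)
The plan is to assemble the statement from three earlier results: the characterization of $\mathcal{Z}$-stability for abelian groups (\cref{prp:AbeCharStComp}), the characterization for generalized discrete Heisenberg groups (\cref{prp:NSCAGenHeis}), and the product permanence result \cref{prp:PermPropCGSigm}. Write $H_1 := H(d_1,\ldots,d_n)$ and $H_2 := \mathbb{Z}^{2(m-n)}$, so that $H(B) \cong H_1 \times H_2$ as in \cref{pgr:GenDisHeis}; since $\sigma = \sigma_1 \times \sigma_2$ is a product cocycle, one then has $C^*_r(H(B),\sigma) \cong C^*_r(H_1,\sigma_1) \otimes C^*_r(H_2,\sigma_2)$, the twisted analogue of $C^*_r(G \times H) \cong C^*_r(G) \otimes C^*_r(H)$ already used in the proof of \cref{prp:PermPropCGSigm}. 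Both $H_1$ and $H_2$ are amenable, hence exact. As $H(B)$ is a finitely generated nilpotent group, \cref{prp:StrIrrCharNSCA} lets me pass freely between $\mathcal{Z}$-stability and nowhere scatteredness for $C^*_r(H(B),\sigma)$; I would work with the latter.

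For the forward implication: if $[\mathbb{Z}^{2(m-n)}:\mathrm{Z}(\mathbb{Z}^{2(m-n)},\sigma_2)] = \infty$, then $C^*_r(H_2,\sigma_2)$ is nowhere scattered by \cref{prp:AbeCharStComp}, so \cref{prp:PermPropCGSigm}(i), applied using exactness of $H_1$, shows that $C^*_r(H(B),\sigma)$ is nowhere scattered and hence $\mathcal{Z}$-stable. Symmetrically, if $\sigma_1$ satisfies the condition in \cref{prp:NSCAGenHeis}, then $C^*_r(H_1,\sigma_1)$ is nowhere scattered, and \cref{prp:PermPropCGSigm}(i), now using exactness of $H_2$, again gives nowhere scatteredness, hence $\mathcal{Z}$-stability, of $C^*_r(H(B),\sigma)$.

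For the converse I would argue by contraposition. Suppose $C^*_r(H(B),\sigma) \cong A_1 \otimes A_2$ is nowhere scattered, where $A_j := C^*_r(H_j,\sigma_j)$, but that neither $A_1$ nor $A_2$ is nowhere scattered. Choose, for $j = 1,2$, ideals $J_j \subseteq I_j$ of $A_j$ with $I_j/J_j$ nonzero and elementary, say $I_j/J_j \cong \mathcal{K}(\mathcal{H}_j)$. By functoriality of the minimal tensor product, $A_1 \otimes A_2$ surjects onto $(A_1/J_1) \otimes (A_2/J_2)$, and inside this quotient the subalgebra $(I_1/J_1) \otimes (I_2/J_2)$ is an ideal, isomorphic to $\mathcal{K}(\mathcal{H}_1) \otimes \mathcal{K}(\mathcal{H}_2) \cong \mathcal{K}(\mathcal{H}_1 \otimes \mathcal{H}_2)$. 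This exhibits a nonzero elementary ideal-quotient of $A_1 \otimes A_2$, contradicting nowhere scatteredness. Hence $A_1$ or $A_2$ is nowhere scattered, and \cref{prp:NSCAGenHeis} (in the first case) or \cref{prp:AbeCharStComp} (in the second) yields precisely the stated dichotomy.

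I do not expect a genuine obstacle: the corollary is essentially a bookkeeping assembly of the cited results. The one ingredient worth spelling out rather than citing is the observation used in the converse --- that the minimal tensor product of two nonzero elementary ideal-quotients is again a nonzero elementary ideal-quotient, so a tensor product of two algebras that fail to be nowhere scattered can never be nowhere scattered --- which uses only functoriality of $\otimes_{\min}$ together with the identification $\mathcal{K}(\mathcal{H}) \otimes \mathcal{K}(\mathcal{H}') \cong \mathcal{K}(\mathcal{H} \otimes \mathcal{H}')$. I would also dispose of the degenerate cases separately: when $n = m$ the group $H_2$ is trivial and the corollary reduces to \cref{prp:NSCAGenHeis}, while when $n = 0$ the group $H(B)$ is abelian and the claim follows at once from \cref{prp:AbeCharStComp}; and I would remark that the condition referenced from \cref{prp:NSCAGenHeis} is unambiguous since conditions (i), (ii) and (iii) there are equivalent.
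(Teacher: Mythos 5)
Your proof is correct and follows the route the paper indicates (the paper states this corollary without proof, as an assembly of \cref{prp:AbeCharStComp}, \cref{prp:NSCAGenHeis} and \cref{prp:PermPropCGSigm} via the identification $C^*_r(H_1\times H_2,\sigma_1\times\sigma_2)\cong C^*_r(H_1,\sigma_1)\otimes C^*_r(H_2,\sigma_2)$ and the equivalences of \cref{prp:StrIrrCharNSCA}). The one ingredient the paper leaves implicit --- that a minimal tensor product of two algebras, neither of which is nowhere scattered, has a nonzero elementary ideal-quotient --- you supply correctly using functoriality of $\otimes_{\min}$ and $\mathcal{K}(\mathcal{H}_1)\otimes\mathcal{K}(\mathcal{H}_2)\cong\mathcal{K}(\mathcal{H}_1\otimes\mathcal{H}_2)$, which is exactly what the converse direction needs.
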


\section{Applications to generalized time-frequency analysis}\label{sec:AppTimFre}

The following section is dedicated to the proof of \Cref{thm:gfta} and hence we retain the notation from \ref{subsec:TimeFreqIntro}. Namely, we let $G$ denote a connected, simply connected, nilpotent Lie group with a continuous $2$-cocycle $\sigma$, and we let $\pi \colon G \to \mathcal{U}(\mathcal{H}_\pi)$ be a $\sigma$-projective, square-integrable, irreducible, unitary representation of $G$.

\begin{theorem}\label{prp:ThmEMain}
Let $\Gamma$ be a lattice in $G$ such that the restriction of $\sigma$ to $\Gamma$ has is a non-rational $2$-cocycle. Then the following hold:
\begin{enumerate}
    \item If $d_\pi \covol(\Gamma) < 1$, then there exists a smooth vector $\eta \in \mathcal{H}_\pi$ such that $\pi(\Gamma) \eta$ is a frame.
    \item If $d_\pi \covol(\Gamma) > 1$, then there exists a smooth vector $\eta \in \mathcal{H}_\pi$ such that $\pi(\Gamma) \eta$ is a Riesz sequence.
\end{enumerate}
\end{theorem}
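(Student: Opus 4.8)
The plan is to recast both statements as questions about Hilbert $C^*$-modules over the twisted group \ca{} $A := C^*_r(\Gamma,\sigma|_\Gamma)$, and to answer them using the regularity of $A$ furnished by \cref{prp:thmA}. As a lattice in a connected, simply connected nilpotent Lie group, $\Gamma$ is a finitely generated torsion-free nilpotent group (Malcev), and $\sigma|_\Gamma$ is non-rational by hypothesis; hence \cref{prp:thmA} (that is, \cref{prp:StrIrrCharNSCA}) applies and $A$ is $\mathcal{Z}$-stable, in particular pure and --- by \cref{prp:PureFinDR} --- of strict comparison. Next I would invoke the operator-algebraic formalism for coherent systems from \cite{BeEnva22} (and, for Gabor systems, \cite{JaLu20}): square-integrability makes $\mathcal{H}_\pi^\infty$ into a dense submodule of a full, finitely generated projective right Hilbert $A$-module $\mathcal{E}$, whose $A$-valued inner product is assembled from the (Schwartz) matrix coefficients and whose class $[\mathcal{E}]\in\Cu(A)$ is evaluated by the canonical trace of $A$ to $d_\pi\covol(\Gamma)$. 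Under this correspondence, part~(i) is equivalent to $\mathcal{E}$ being singly generated as a Hilbert $A$-module, while part~(ii) is equivalent to $A$ embedding into $\mathcal{E}$ as an orthogonally complemented $A$-submodule (that is, $\langle\eta,\eta\rangle_A$ being invertible in $A$ for some $\eta$); in both cases the window can afterwards be taken in $\mathcal{H}_\pi^\infty$, since $\mathcal{H}_\pi^\infty$ is dense in $\mathcal{E}$ and the relevant conditions are open.

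Fix a projection $p_\mathcal{E}\in M_\infty(A)$ with $[p_\mathcal{E}]=[\mathcal{E}]$. Part~(i) then amounts to the Cuntz comparison $[p_\mathcal{E}]\leq[1_A]$ and part~(ii) to $[1_A]\leq[p_\mathcal{E}]$; since $A$ has strict comparison and $\mathcal{E}$ (hence $p_\mathcal{E}$) is full, each of these follows once the corresponding strict inequality of trace values holds --- $\tau(p_\mathcal{E})<1$ for every tracial state $\tau$ on $A$ when $d_\pi\covol(\Gamma)<1$, and $\tau(p_\mathcal{E})>1$ for every tracial state $\tau$ when $d_\pi\covol(\Gamma)>1$. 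So the whole argument reduces to the claim that $\tau(p_\mathcal{E})=d_\pi\covol(\Gamma)$ for \emph{every} tracial state on $A$, not only the canonical one.

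This trace-constancy claim is the crux, and it is the point at which the non-simple case genuinely departs from \cite{BeEnva22}: if $(\Gamma,\sigma|_\Gamma)$ satisfies Kleppner's condition then $A$ is simple with a unique trace and there is nothing to do, but in general $A$ carries a whole simplex of traces (already for generalized discrete Heisenberg lattices). I would prove the claim by induction along the $2$-cocycle decomposition of $(\Gamma,\sigma|_\Gamma)$, mirroring the proof of \cref{prp:StrIrrCharNSCA}: using \cref{prop:packer-raeburn} to write $A$ as the section algebra of a bundle over $\dual{\mathrm{Z}(\Gamma,\sigma|_\Gamma)}$ with fibers $C^*_r(\Gamma/\mathrm{Z}(\Gamma,\sigma|_\Gamma),\omega_\gamma)$, one checks that $\mathcal{E}$ restricts on each fiber to a Heisenberg-type module attached to a square-integrable projective representation of the relevant sub-quotient nilpotent Lie group, with the same formal-degree-times-covolume invariant $d_\pi\covol(\Gamma)$; the base case, $\mathrm{Z}(\Gamma,\sigma|_\Gamma)$ finite, is covered by \cref{prp:TwistCentFin}, where $A$ is a finite direct sum of simple $\mathcal{Z}$-stable summands each with a unique trace, on which the value is obtained from the classical formal-degree computation as in \cite{JaLu20,BeEnva22}. (Alternatively, one can argue $K$-theoretically that $\tau(p_\mathcal{E})$ is locally constant on the connected space $\dual{\mathrm{Z}(\Gamma,\sigma|_\Gamma)}$, hence constant.)

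I expect this trace-constancy step --- controlling the Heisenberg-type module along the Packer--Raeburn bundle over the whole trace simplex of the possibly non-simple algebra $A$ --- to be the main obstacle; the remainder is a fairly direct adaptation of the module picture of \cite{JaLu20,BeEnva22}, with \cref{prp:thmA} providing the essential new input, namely $\mathcal{Z}$-stability (and hence strict comparison) in the absence of simplicity. Feeding the claim into the comparison argument of the second paragraph yields that $\mathcal{E}$ is singly generated when $d_\pi\covol(\Gamma)<1$ and that $A$ embeds complementedly into $\mathcal{E}$ when $d_\pi\covol(\Gamma)>1$, which by the dictionary of the first paragraph are exactly parts~(i) and~(ii).
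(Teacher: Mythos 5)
Your overall strategy is exactly the paper's: pass to the Hilbert module $\mathcal{E}_\Gamma$ of \cite[Theorem 6.5]{BeEnva22}, reduce (i) and (ii) to Cuntz comparisons $[p_{\mathcal{E}}]\leq[1]$ and $[1]\leq[p_{\mathcal{E}}]$, get strict comparison from $\mathcal{Z}$-stability via \cref{prp:thmA}, and observe that everything then hinges on showing $\tau'(\mathcal{E}_\Gamma)=d_\pi\covol(\Gamma)$ for \emph{every} tracial state $\tau'$, not just the canonical one. You correctly identify this trace-constancy as the crux and as the genuine point of departure from the simple (Kleppner) case.

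Where you diverge from the paper is in how that crux is resolved, and your proposed resolution is the one real gap. The paper disposes of it in one line: $\Gamma$ is a finitely generated torsion-free nilpotent group, hence of polynomial growth by \cite{Gro81Poly}, and by \cite[Theorem 1]{Ji95Trace} all tracial states on $\mathrm{C}^*(\Gamma,\sigma)$ agree on projections. Your substitute --- an induction along the $2$-cocycle decomposition in which $\mathcal{E}_\Gamma$ is claimed to restrict on each Packer--Raeburn fiber to ``a Heisenberg-type module attached to a square-integrable projective representation of the relevant sub-quotient nilpotent Lie group with the same invariant'' --- is not established and is far from obvious: the iterated quotients $\Gamma/\mathrm{Z}(\Gamma,\sigma)$ are abstract finitely generated nilpotent groups that need not sit as lattices in any Lie group carrying a square-integrable representation compatible with $\pi$, so there is no a priori ``same formal-degree-times-covolume'' statement to induct on. Your fallback connectedness argument also does not work as stated: $\dual{\mathrm{Z}(\Gamma,\sigma|_\Gamma)}$ is the dual of a finitely generated abelian group and is disconnected whenever that group has torsion, and in any case local constancy over the base space does not by itself control $\tau'(p_{\mathcal{E}})$ over the whole trace simplex. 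So the skeleton of your proof is sound and matches the paper's, but the step you yourself flag as the main obstacle remains open in your write-up; it is closed in the paper not by a new bundle argument but by the polynomial-growth trace rigidity of \cite{Ji95Trace}.
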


\begin{proof}
Let $\mathcal{E}_\Gamma$ denote the finitely generated Hilbert $\mathrm{C}^*$-module over $\mathrm{C}_r^*(\Gamma,\sigma)$ constructed in \cite[Theorem 6.5]{BeEnva22}, which satisfies that 
\[ \tau(\E_\Gamma) = d_\pi \covol(\Gamma) \]
for $\tau$ the canonical tracial state on $\mathrm{C}_r^*(\Gamma,\sigma)$.

Now $\Gamma$ is a torsion-free, finitely generated, nilpotent group and thus has polynomial growth by \cite{Gro81Poly}. Hence, if $\tau'$ is any other tracial state on $C_r^*(\Gamma,\sigma)$, then $\tau'(\E_\Gamma) = d_\pi \covol(\Gamma)$ as well by \cite[Theorem 1]{Ji95Trace}. Also, since the restriction of $\sigma$ to $\Gamma$ is non-rational, $C_r^*(\Gamma,\sigma)$ is $\mathcal{Z}$-stable by \Cref{prp:StrIrrCharNSCA}, hence has strict comparison. We can therefore argue as in the proof of \cite[Theorem 6.6]{BeEnva22} and deduce from $d_\pi \covol(\Gamma) < 1$ (resp.\ $d_\pi \covol(\Gamma) > 1$) the existence of $\eta \in \mathcal{H}_\pi^\infty$ such that $\pi(\Gamma) \eta$ is a frame (resp.\ Riesz sequence).
\end{proof}

When $\sigma$ is not non-rational, we can still obtain a quantitative upper bound on the number of smooth windows needed for a multiwindow coherent frame over a lattice $\Gamma \subseteq G$ satisfying $d_\pi \covol(\Gamma) < 1$. The following result is based on \cite[Theorem~5.9]{EckWu24}.

\begin{theorem}
Let $\Gamma$ be a lattice in $G$ such that $d_\pi \covol(\Gamma) < 1$. Denote by $H_2(\Gamma)$ the second cohomology group of $\Gamma$, and set
\[ m = f(h(H_2(\Gamma)) + h(\Gamma)) \]
where $f \colon \N \to \N$ is the function recursively defined by $f(1) = 1$ and
\[ f(n+1) = 9^n (n+1) (f(n)+1)-1 , \quad n \geq 2 .\]
Then there exist $\eta_1, \ldots, \eta_{m+1} \in \mathcal{H}_\pi^\infty$ such that $(\pi(\gamma)\eta_j)_{\gamma \in \Gamma, 1 \leq j \leq m+1}$ is a frame.
\end{theorem}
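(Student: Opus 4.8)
The plan is to reduce the statement to a quantitative Cuntz-comparison estimate in $A:=\mathrm{C}^*(\Gamma,\sigma)$ and then feed in the explicit decomposition-rank bound of Eckhardt--Wu. First I would recall from \cite[Theorem~6.5]{BeEnva22} the finitely generated projective Hilbert $A$-module $\mathcal{E}_\Gamma$ attached to $\pi$ and $\Gamma$, together with its dense smooth submodule. In the module picture underlying \cite[Theorem~6.6]{BeEnva22}, a realization of $\mathcal{E}_\Gamma$ as an orthogonally complemented submodule of the free module $A^{\,k}$ --- equivalently, an isomorphism $\mathcal{E}_\Gamma\cong p\,A^{\,k}$ for a projection $p\in M_k(A)$, equivalently $[\mathcal{E}_\Gamma]\le k\,[1_A]$ in $\Cu(A)$ --- yields smooth windows $\eta_1,\dots,\eta_k\in\mathcal{H}_\pi^\infty$ for which $(\pi(\gamma)\eta_j)_{\gamma\in\Gamma,\,1\le j\le k}$ is a frame (here $A$ is unital since $\Gamma$ is finitely generated). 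So it suffices to show $[\mathcal{E}_\Gamma]\le (m+1)[1_A]$.

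Next I would record the trace data: by \cite[Theorem~6.5]{BeEnva22} the canonical trace $\tau$ satisfies $\tau(\mathcal{E}_\Gamma)=d_\pi\covol(\Gamma)$, and since $\Gamma$ is finitely generated, torsion-free and nilpotent it has polynomial growth by \cite{Gro81Poly}, so \cite[Theorem~1]{Ji95Trace} forces \emph{every} tracial state $\tau'$ of $A$ to satisfy $\tau'(\mathcal{E}_\Gamma)=d_\pi\covol(\Gamma)<1=\tau'(1_A)$. In particular $d_{\tau'}(\mathcal{E}_\Gamma)<d_{\tau'}(1_A)$ for all tracial states.

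The core step is the quantitative comparison provided by \cite[Theorem~5.9]{EckWu24}: $A$ has decomposition rank at most $m=f(h(H_2(\Gamma))+h(\Gamma))$, and this explicit bound --- the quantitative counterpart of the phenomenon behind \cref{prp:PureFinDR} --- gives that any projection $q$ over $A$ with $\tau'(q)<1$ for every tracial state satisfies $[q]\le (m+1)[1_A]$ in $\Cu(A)$. Applied to a projection representing $[\mathcal{E}_\Gamma]$, this yields $[\mathcal{E}_\Gamma]\le(m+1)[1_A]$; since both sides are classes of projections it upgrades to $\mathcal{E}_\Gamma\cong p\,A^{\,m+1}$ for a projection $p\in M_{m+1}(A)$, and the first paragraph then produces $\eta_1,\dots,\eta_{m+1}\in\mathcal{H}_\pi^\infty$ with $(\pi(\gamma)\eta_j)_{\gamma\in\Gamma,\,1\le j\le m+1}$ a frame. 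Along the way I would note the bookkeeping $h(\Gamma)=\dim G=h(G)$ and $h(H_2(\Gamma))=h(H_2(G))$ --- the rational cohomology of the compact nilmanifold $\Gamma\backslash G$ coinciding with that of the Lie algebra of $G$ by Nomizu's theorem --- so that $m$ matches the quantity in the statement.

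I expect the main obstacle to be exactly the quantitative comparison step. In contrast with \cref{thm:gfta}, here $\sigma$ need not be non-rational, so $A$ need not be $\mathcal{Z}$-stable and the single-window argument of \cite[Theorem~6.6]{BeEnva22} is not available; one must instead convert the \emph{finiteness} of the decomposition rank of $A$ into the \emph{explicit} conclusion that $m+1$ copies of the unit dominate any trace-strictly-dominated projection. This is the content of \cite[Theorem~5.9]{EckWu24}, so the remaining work is to verify that its hypotheses apply to $A=\mathrm{C}^*(\Gamma,\sigma)$ and that its conclusion is (or is trivially equivalent to) the Cuntz inequality above; the trace computation and the module-to-frame translation are then routine given \cite{BeEnva22}.
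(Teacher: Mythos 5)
Your overall route is the same as the paper's: compute that every tracial state assigns $\mathcal{E}_\Gamma$ the value $d_\pi\covol(\Gamma)<1$ (polynomial growth of $\Gamma$ via \cite{Gro81Poly} plus \cite[Theorem~1]{Ji95Trace}), invoke the Eckhardt--Wu dimension bound, and convert that into the Cuntz inequality $[\mathcal{E}_\Gamma]\leq (m+1)[1]$, which by the module-to-frame dictionary of \cite{BeEnva22} yields $m+1$ smooth windows. The trace computation and the frame translation are fine.

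The gap is in what you call the core step. You assert that the quantitative comparison ``any projection $q$ with $\tau'(q)<1$ for all tracial states satisfies $[q]\leq(m+1)[1]$'' is \emph{the content of} \cite[Theorem~5.9]{EckWu24}, and that the remaining work is only to check that its conclusion is ``trivially equivalent'' to this Cuntz inequality. It is not: that theorem supplies only the bound on the nuclear dimension of $\mathrm{C}^*(\Gamma,\sigma)$ (nuclear dimension, incidentally, not decomposition rank as you write). The passage from ``nuclear dimension at most $m$'' to ``$m$-comparison'', i.e.\ to the statement that $m+1$ copies of the unit dominate any element whose value under every normalized functional is strictly less than $1$, is a separate and genuinely nontrivial theorem --- this is \cite{Rob11MComp}, which the paper cites precisely at this point. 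Without it, your argument has no bridge from the dimension bound to the comparison inequality; the rest of your proof is sound once this citation is inserted. (A further small point you gloss over: $m$-comparison is phrased in terms of functionals on the Cuntz semigroup rather than tracial states, so one should note that for this unital nuclear algebra the normalized functionals are induced by tracial states, as in the argument of \cite[Theorem~6.6]{BeEnva22}.)
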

\begin{proof}
    It follows from \cite[Theorem~5.9]{EckWu24} that $A = \mathrm{C}_r^*(\Gamma,\sigma)$ has nuclear dimension at most $m$. Thus, it follows from \cite{Rob11MComp} that it has $m$-comparison. Thus, $\tau (\mathcal{E}_\Gamma )<1$ implies $\mathcal{E}_\Gamma$ is isomorphic to a submodule of $A^{m+1}$. Using again the results of \cite[Proposition 5.2]{BeEnva22} there exists an $(m+1)$-multiwindow frame with windows in $\mathcal{H}_\pi^\infty$.
\end{proof}

We end with a concrete example where \Cref{prp:ThmEMain} gives new sufficient conditions for the existence of smooth frames and Riesz sequences.

\begin{example}
We consider \cite[Example 7.2]{BeEnva22}. Set $G = \R \times H_3(\R)$ where $H_3(\R)$ is the 3-dimensional real Heisenberg group. We will use the group law on $G$ given by
\[ (s_1,s_2,s_3,s_4)(t_1,t_2,t_3,t_4) = (s_1+t_1,s_2+t_2+s_4t_3, s_3+t_3,s_4+t_4) . \]
There is a projective, square-integrable, irreducible, unitary representation $\pi$ of $G$ on $L^2(\R^2)$ given by
\[ \pi(s_1, \ldots, s_4)f(x, y) = e^{2\pi i (s_3 x - s_2 y + s_3 y^2/2)} f(x-s_1,y- s_4) . \]
The associated 2-cocycle is given by
\[ \sigma((s_1, \ldots, s_4),(t_1, \ldots, t_4)) = e^{2\pi i (-s_1t_3 + s_4 t_2 + s_4^2 t_3 /2)} . \]
A Haar measure on $G$ can be chosen as the 4-dimensional Lebesgue measure on the underlying Euclidean space, and with this choice $d_\pi$ equals $1$.

Let $\alpha, \beta > 0$. A family of lattices in $G$ are given by
\[ \Gamma_{\alpha,\beta} = \{ (\alpha k_1, \beta^2 k_2, \beta k_3, \beta k_4) : k_1,k_2,k_3,k_4 \in \Z \} . \]
As abstract groups these are all isomorphic to $\Gamma = \Z \times H_3(\Z)$ where $H_3(\Z)$ denotes the integer Heisenberg group. To simplify notation we consider instead $\Gamma$ with the 2-cocycle
\[ \sigma_{\alpha,\beta}((k_1,\ldots,k_4),(l_1, \ldots, l_4)) = e^{2\pi i (- \alpha \beta k_1 l_3 + \beta^3 k_4 l_2 + \beta^3 k_4^2 l_3/2)} . \]
Now
\[ \mathrm{Z}(\Gamma,\sigma_{\alpha,\beta}) = \{ ( k_1, k_2, 0, 0) : \alpha \beta k_1 \in \Z, \beta^3 k_2 \in \Z \}  \]
which is of infinite index in $\Gamma$. Hence, $(\Gamma,\sigma)$ satisfies Kleppner's condition if and only if both $\alpha \beta$ and $\beta^3$ are irrational. The main result of \cite{BeEnva22} then states that in this case there exists $\eta \in \mathcal{H}_\pi^\infty$ such that $\pi(\Gamma_{\alpha,\beta})\eta$ is a frame (resp.\ Riesz sequence) if $\covol(\Gamma_{\alpha,\beta}) < 1$ (resp.\ $\covol(\Gamma_{\alpha,\beta}) > 1$). We will show that the restriction of $\sigma$ to $\Gamma$ is non-rational when either $\alpha \beta$ or $\beta^3$ is irrational. Thus, by \Cref{thm:gfta} we can reach the same conclusion about frames and Riesz sequences over $\Gamma_{\alpha,\beta}$ with this weaker assumption.

Suppose first that $\alpha \beta$ is rational, say $a \alpha \beta \in \Z$ for a minimally chosen $a \in \Z$, and that $\beta^3$ is irrational. Then $\mathrm{Z}(\Gamma,\sigma_{\alpha,\beta}) = a \Z \times \{ 0 \}^3$, so $\Gamma / \mathrm{Z}(\Gamma,\sigma_{\alpha,\beta}) \cong \Z / a \Z \times H_3(\Z)$. In this case a section $c \colon \Gamma/ \mathrm{Z}(\Gamma,\sigma_{\alpha,\beta}) \to \Gamma$ is given by $c([k_1], k_2, k_3, k_4) = (k_1 \; \mathrm{mod} \; a, k_2, k_3, k_4)$. Note also that the formula for $\sigma_{\alpha,\beta}$ gives a well-defined 2-cocycle $\omega$ on $\Gamma/ \mathrm{Z}(\Gamma,\sigma_{\alpha,\beta})$. Since $c$ maps the center of $\Gamma / \mathrm{Z}(\Gamma,\sigma_{\alpha,\beta})$ into the center of $\Gamma$, it follows that $\omega_\gamma = \omega$ for every $\gamma \in \dual{\mathrm{Z}(\Gamma,\sigma_{\alpha,\beta})}$, so $\mathrm{Z}( \Gamma/\mathrm{Z}(\Gamma,\sigma_{\alpha,\beta}), \omega_\gamma) = \{ e \}$ for all $\gamma$. Hence $\sigma_{\alpha,\beta}$ is non-rational.

Suppose next that $\beta^3$ is rational, say $b \beta^3 \in \Z$ for a minimally chosen $b \in \Z$, and that $\alpha \beta$ is irrational. Then $\mathrm{Z}(\Gamma,\sigma_{\alpha,\beta}) = \{ (0, bk_2, 0, 0) : k_2 \in \Z \}$. The commutator of two elements $k,l \in \Gamma$ is given by
\[ [k,l] = (0, k_3 l_4 - k_4 l_3, 0, 0) , \]
so $D \coloneqq \mathrm{C}(\Gamma / \mathrm{Z}(\Gamma, \sigma_{\alpha,\beta})) = \{ [0,k_2,0,0] : 0 \leq k_2 < b \} \cong \Z_{b}$. One then checks that the conditions of \Cref{prp:CharNSCAforSome2step} are satisfied. The kernel of $\varphi_D(\omega)$ is given by $M = \{ [k_1,k_2,k_3,bk_4] : k_i \in \Z, 0 \leq k_2 < b \}$. A section $c \colon \Gamma / \mathrm{Z}(\Gamma,\sigma_{\alpha,\beta}) \to \Gamma$ of the quotient map is given by $c([k_1,k_2,k_3,k_4]) = (k_1, k_2 \; \mathrm{mod} \; b, k_3, k_4)$. Letting $\gamma$ be a character on $\mathrm{Z}(\Gamma,\sigma_{\alpha,\beta})$, say $\gamma(0,k_2,0,0) = e^{2\pi i \xi k_2}$ for some $\xi \in \R$, we then have that
\[ \widetilde{\omega_\gamma}([k],[l]) = e^{2\pi i \xi (k_4 l_3 - k_3 l_4)} e^{2\pi i (-\alpha\beta (k_1 l_3 - k_3 l_1) + \beta^3 (k_4 l_2 - l_4 k_2) + \beta^3 (k_4^2 l_3 - l_4^2 k_3)/2) } . \]
The elements of the center of $M$ are of the form $[k_1,k_2,bk_3,bk_4]$ for $k_i \in \Z$, and such an element is $\mathrm{Res}(\omega_\gamma)$-regular precisely when
\[ \xi( bk_4 l_3 - b^2 k_3 l_4) - \alpha \beta( k_1 l_3 - b k_3 l_1) + \beta^3 (b k_4 l_2 - b l_4 k_2) + \beta^3 (b^2 k_4^2 l_3 - b^3 l_4^2 k_3)/2 \in \Z \]
for all $l_i \in \Z$. Setting $l_1 = 1$ and $l_i = 0$ for $i \neq 1$ gives that $\alpha \beta b k_3 \in \Z$, which forces $k_3 = 0$ since $\alpha \beta$ is irrational. This means that $\mathrm{Z}(M, \mathrm{Res}(\omega_\gamma))$ is a subgroup of $\{ [k_1, k_2, 0, bk_4] : 0 \leq k_2 < b, k_i \in \Z \}$ which already has infinite index in $M$. We can therefore conclude that $[M : \mathrm{Z}(M, \mathrm{Res}(\omega_\gamma)) ] = \infty$, which by \Cref{prp:CharNSCAforSome2step} shows that $\sigma_{\alpha,\beta}$ is non-rational. This finishes the argument.
\end{example}

\printbibliography

\end{document}